\numberwithin{equation}{section}
\theoremstyle{plain}
\newtheorem{theorem}[equation]{Theorem}
\newtheorem{corollary}[equation]{Corollary}
\newtheorem{lemma}[equation]{Lemma}
\newtheorem{proposition}[equation]{Proposition}
\theoremstyle{definition}
\newtheorem{definition}[equation]{Definition}
\newtheorem{remark}[equation]{Remark}
\numberwithin{equation}{section}
\newcommand{\R}{{\mathbb R}}
\newcommand{\N}{{\mathbb N}}
\newcommand{\Om}{\Omega}
\providecommand{\vint}[1]{\mathchoice
	{\mathop{\vrule width 5pt height 3 pt depth -2.5pt
			\kern -9pt \kern 1pt\intop}\nolimits_{\kern -5pt{#1}}}
	{\mathop{\vrule width 5pt height 3 pt depth -2.6pt
			\kern -6pt \intop}\nolimits_{\kern -3pt{#1}}}
	{\mathop{\vrule width 5pt height 3 pt depth -2.6pt
			\kern -6pt \intop}\nolimits_{\kern -3pt{#1}}}
	{\mathop{\vrule width 5pt height 3 pt depth -2.6pt
			\kern -6pt \intop}\nolimits_{\kern -3pt{#1}}}}
\newcommand{\eps}{\varepsilon}
\newcommand{\loc}{\mathrm{loc}}
\newcommand\M{\operatorname{\mathcal M}}
\newcommand{\BV}{\mathrm{BV}}
\newcommand{\SBV}{\mathrm{SBV}}
\newcommand{\ch}{\text{\raise 1.3pt \hbox{$\chi$}\kern-0.2pt}}
\newcommand{\mres}{\mathbin{\vrule height 2ex depth 2.2pt width
		0.12ex\vrule height -0.3ex depth 2.2pt width .5ex}}
\DeclareMathOperator{\dive}{div}
\DeclareMathOperator*{\esssup}{ess\,sup}
\DeclareMathOperator{\Var}{Var}
\DeclareMathOperator{\osc}{osc}
\numberwithin{equation}{section}
\theoremstyle{remark} 
\theoremstyle{definition}
\newcommand{\B}{\mathbb{B}}
\newcommand{\BVY}{\mathbf{BVY}}
\newcommand{\EB}{\mathbf{E}}
\newcommand{\Le}{\mathcal{L}}
\newcommand{\SP}{\mathbb{S}}
\newcommand{\Y}{\mathbf{Y}}
\newcommand{\YM}{\mathbf{Y}^{\mathscr{M}}}
\newcommand{\abs}[1]{\left\lvert #1 \right\rvert}
\newcommand{\abss}[1]{\lvert #1 \rvert}
\newcommand{\brac}[1]{\left(#1\right)}
\newcommand{\brangle}[1]{\left\langle#1\right\rangle}
\newcommand{\Brangle}[1]{\langle\kern-2.5pt\langle #1 \rangle\kern-2.5pt\rangle}
\newcommand{\col}{\colon\,}
\providecommand{\eps}{\varepsilon}
\newcommand{\id}{\,\mathrm{d}}
\newcommand{\norm}[1]{\left\lVert#1\right\rVert}
\newcommand{\normiii}[1]{{\left\vert\kern-0.25ex\left\vert\kern-0.25ex\left\vert #1 \right\vert\kern-0.25ex\right\vert\kern-0.25ex\right\vert}}
\newcommand{\weaksto}{\overset{*}{\rightharpoonup}}
    	\noindent \textsc{Academy of Mathematics and Systems Science,\\
		Chinese Academy of Sciences\\
		Beijing 100190, PR China}\\
    \noindent\textsc{Max Planck Institute for Mathematics in the Sciences,\\ Inselstrasse 22, Leipzig, 04103, Germany}\\
\begin{document}
	\title{Non-local functionals, total variation,
		and Gamma-convergence with respect to area-strict convergence
		\footnote{{\bf 2020 Mathematics Subject Classification}: 26B30, 26A46, 49J45
			\hfill \break {\it Keywords\,}: Total variation, special function of bounded variation,
			nonlocal functional, area-strict convergence, Gamma-convergence
	}}
	\author{Panu Lahti and Zhuolin Li}

	\maketitle
	
	\begin{abstract}
		We study a class of non-local functionals that was introduced by
		Brezis--Seeger--Van Schaftingen--Yung \cite{BSSY2},
		and can be used to characterize the total variation of functions.
		We establish the $\Gamma$-limit of these functionals with respect to area-strict convergence.
	\end{abstract}

	\section{Introduction}

Let $n\in \N$ and let $\gamma\in\R$,
and define the measure
\[
\nu_{\gamma}(A):=\iint_A |x-y|^{\gamma-n}\id y\id x,
\quad A\subset \R^n\times \R^n.
\]
Let $\Om\subset \R^n$ be open. For a measurable function $u\colon \Om\to \R$,
let
\[
E_{\gamma,\lambda}(u,\Om)
:=\left\{(x,y)\in \Om\times \Om\colon
|u(x)-u(y)|>\lambda|x-y|^{1+\gamma}\right\}, \quad \lambda \in (0,\infty),
\]
and then define the functional
\begin{align*}
	F_{\gamma,\lambda}\left(u,\Om\right)
	:=\lambda\nu_\gamma (E_{\gamma,\lambda}(u,\Om)).
\end{align*}
This was studied in the case $\gamma=n$ in \cite{BVSY,Pol}, and then
more generally for $\gamma\in\R$ by Brezis--Seeger--Van Schaftingen--Yung in
\cite{BSSY,BSSY2}.

Denoting $x=(x_1,\ldots,x_n)\in \R^n$, let
\[
C_n:=\int_{\mathbb S^{n-1}}|x_1|\id \mathcal H^{n-1}(x),
\]
where $\mathcal H^{n-1}$ is the $(n-1)$-dimensional Hausdorff measure,
and $\mathbb S^{n-1}$ is the unit sphere.
We will always consider $\gamma>0$.
In response to a question posed in \cite{BSSY},
Picenni \cite{Pic} showed that
for a special function of bounded variation $u\in \SBV(\R^n)\subset \BV(\R^n)$, we have
\begin{equation}\label{eq:SBV limit}
\lim_{\lambda \to\infty}
F_{\gamma,\lambda}(u,\R^n)
= \frac{C_n}{\gamma}|D^au|(\R^n)
+\frac{C_n}{\gamma+1}|D^ju|(\R^n).
\end{equation} 
See $\S$\ref{subsec:def} for the notation. The space $\SBV(\Omega)$ of special functions of bounded variation is a subset of $\BV(\Omega)$, composed of functions with zero Cantor parts.

The first author of the current paper showed the following lower bound in \cite{L-sharp}
for BV functions whose variation measure may also have a Cantor part:
	for $\Om\subset \R^n$ open and $u$ in the homogeneous BV space $\dot{\BV}(\Om)$, we have
	\begin{equation}\label{eq:sharp lower bound}
	\liminf_{\lambda \to\infty}
	F_{\gamma,\lambda}(u,\Om)
	\ge \frac{C_n}{\gamma}|D^au|(\Om)
	+\frac{C_n}{\gamma+1}|D^cu|(\Om)
	+\frac{C_n}{\gamma+1}|D^ju|(\Om).
	\end{equation}

For  BV functions whose variation measure $Du$ has a nonvanishing Cantor part $D^c u$, it may happen that
the limit $	\lim_{\lambda \to\infty}F_{\gamma,\lambda}(u,\Om)$ fails to exist, see
\cite[Example 3.23]{L-sharp}.
This is one reason why it is of interest to investigate
the \emph{$\Gamma$-limit}; see Section \ref{sec:definitions} for definitions.
The $\Gamma$-limit is currently not known, but if it exists, it is necessarily at most
\[
\frac{C_n}{\gamma+1}|Du|(\R^n),
\]
see Remark \ref{rmk:Gamma conv}.
Note that this limit does not agree,
in the case of $\SBV$ functions,
with the ``pointwise'' limit obtained in \eqref{eq:SBV limit}.
For this reason, we consider $\Gamma$-convergence
with respect to a different topology from the usual $L^1_{\loc}$-topology;
namely the \emph{area-strict topology}.
We call $\Gamma$-convergence with respect to the area-strict topology \emph{$\Gamma_{AS}$-convergence}.

The following is our main theorem.

\begin{theorem}\label{thm:Gamma area strict limit}
	Let $\gamma>0$ and let $\Om$ be either $\R^n$ or a bounded domain with
	Lipschitz boundary.
	The $\Gamma_{AS}$-limit of the functionals $F_{\gamma,\lambda}(\cdot,\Om)$ as $\lambda\to\infty$ is 
	\[
		\frac{C_n}{\gamma}|D^au|(\Om)+ \frac{C_n}{\gamma+1}|D^s u|(\Om).
	\]
\end{theorem}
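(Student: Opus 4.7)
The proof splits into the two standard inequalities of $\Gamma$-convergence with respect to the area-strict topology: a liminf bound valid for every area-strictly convergent sequence $u_\lambda \to u$, and the construction of a recovery sequence achieving the target value $\frac{C_n}{\gamma}|D^a u|(\Om) + \frac{C_n}{\gamma+1}|D^s u|(\Om)$. The pointwise results \eqref{eq:SBV limit} and \eqref{eq:sharp lower bound} already capture the correct two-coefficient structure, so the plan is to upgrade them to the $\Gamma$-convergence level using area-strict convergence as the appropriate topology.

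For the $\limsup$ inequality (recovery sequence), the plan is to approximate $u$ by $\SBV$ functions to which \eqref{eq:SBV limit} applies directly. The central technical ingredient is a density lemma: for every $u \in \BV(\Om)$ there exists a sequence $(v_k) \subset \SBV(\Om)$ with $v_k \to u$ area-strictly, $|D^a v_k|(\Om) \to |D^a u|(\Om)$, and $|D^j v_k|(\Om) \to |D^s u|(\Om)$. Intuitively, one leaves the regular part of $u$ (essentially) unchanged and \emph{quantizes} $u$ along a vanishing mesh of level sets on the support of $|D^c u|$, turning the diffuse Cantor mass into an ever-finer cascade of small jumps whose total variation approaches $|D^c u|(\Om)$; verifying area-strict convergence then reduces to checking $\int_\Om \sqrt{1+|Dv_k|^2} \to \int_\Om \sqrt{1+|Du|^2}$. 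Given such $v_k$, \eqref{eq:SBV limit} yields $\lim_{\lambda\to\infty} F_{\gamma,\lambda}(v_k,\Om) = \tfrac{C_n}{\gamma}|D^a v_k|(\Om) + \tfrac{C_n}{\gamma+1}|D^j v_k|(\Om)$, whose right-hand side tends as $k\to\infty$ to the desired value, and a diagonal extraction produces the recovery sequence $u_\lambda := v_{k(\lambda)}$.

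For the $\liminf$ inequality, the bound \eqref{eq:sharp lower bound} already provides the correct constants, but only for the constant sequence $u_\lambda \equiv u$; the task is to extend it to an arbitrary area-strictly convergent sequence. The plan is to localize via the Radon measures $\mu_\lambda(A) := \lambda\,\nu_\gamma\bigl(E_{\gamma,\lambda}(u_\lambda,\Om) \cap (A\times A)\bigr)$ on $\Om$, extract a subsequential weak$^*$ limit $\mu$ (when the total mass stays bounded; otherwise the inequality is trivial), and prove $\mu \ge \tfrac{C_n}{\gamma}|D^a u| + \tfrac{C_n}{\gamma+1}|D^s u|$ by estimating its Radon--Nikodym densities separately at $|D^a u|$-a.e.\ regular point, $\mathcal H^{n-1}$-a.e.\ jump point, and $|D^c u|$-a.e.\ Cantor point. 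At each such point one blows up both $u$ and $u_\lambda$ at a suitable rate; area-strict convergence should force the blow-up of $u_\lambda$ to agree with that of $u$ to leading order, so that the local lower bound reduces to \eqref{eq:sharp lower bound} applied to the limiting profile. The main obstacle is precisely this blow-up step for the varying sequence: one must rule out that $|Du_\lambda|$ concentrates at points where $Du$ does not, or conversely disperses mass away from the jump and Cantor supports. Ruling out these pathologies is exactly where the strength of area-strict (as opposed to $L^1_{\loc}$) convergence enters, and translating the global control it provides into the pointwise blow-up bounds needed to match the coefficients $\frac{C_n}{\gamma}$ and $\frac{C_n}{\gamma+1}$ is the delicate part of the argument.
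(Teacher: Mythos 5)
Your recovery-sequence (limsup) plan matches the paper's strategy: approximate $u$ area-strictly by $\SBV$ functions whose jump variation carries what was the singular mass, apply the pointwise $\SBV$ limit, and diagonalize. Two things are glossed over that the paper handles explicitly. First, \eqref{eq:SBV limit} is stated on $\R^n$; the paper must first upgrade it to bounded Lipschitz domains (Theorem \ref{thm:SBV}), which takes real work (extension, slicing, truncation). Second, the density lemma is more delicate than ``quantize on the Cantor support'': quantizing over the whole domain would destroy area-strict convergence where $\nabla u$ is large. The paper instead mollifies $u$ away from a small compact set $K$ carrying almost all of $|D^su|$, quantizes only near $K$ with a cutoff (Lemma \ref{lem:appr with jump functions}), and patches with the Leibniz rule; verifying the one-sided bounds on $|D^a u_k|$, $|D^j u_k|$ and the area functional then combines with lower semicontinuity to give area-strict convergence. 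So the same idea, but the details you wave at are the substance.

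The liminf plan is a genuinely different route, and it has a gap that I don't think your sketch can close. You propose localizing via $\mu_\lambda(A):=\lambda\,\nu_\gamma(E_{\gamma,\lambda}(u_\lambda,\Om)\cap(A\times A))$, extracting a weak$^*$ limit, and bounding Radon--Nikod\'ym densities by blow-up. First, $\mu_\lambda$ as written is not a measure---because the functional is nonlocal, $A\mapsto \nu_\gamma(E\cap(A\times A))$ is merely superadditive; one must project onto one variable (e.g.\ integrate in $y$) to get an honest Radon measure, and then the density analysis changes. More fundamentally, the blow-up step for the \emph{varying} sequence $u_\lambda$ is exactly the point where the argument would fail: area-strict convergence does not control the local behaviour of $u_\lambda$ at $|D^cu|$-a.e.\ point. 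Each $u_\lambda$ may well be smooth and assign no mass to the Cantor support of $Du$, so the blow-ups of $u_\lambda$ need not ``agree to leading order'' with those of $u$ at such points, and standard BV blow-up at Cantor points is delicate even for a fixed function. You flag this yourself as ``the delicate part''; it is in fact the whole battle, and you give no mechanism to resolve it. The paper sidesteps it entirely by slicing: Lemma \ref{lem:area strict in directions} and Proposition \ref{prop:area strict on slices} show that area-strict convergence of $u_\lambda$ passes to a.e.\ line, and in one dimension Proposition \ref{prop:area strict and norm} gives the crucial quantitative consequence $\limsup_\lambda |D(u_\lambda - u)|\le 2|D^su|$, which (after localizing to intervals disjoint from most of the singular support) lets one replace $u_\lambda$ by $u$ with a controllable error in the functional via the a priori bound \eqref{eq:sup bound}. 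That estimate---not any pointwise blow-up agreement---is what the area-strict hypothesis actually buys, and your proposal never isolates it.
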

Note that for $u\in\dot{\SBV}(\Om)$, this limit does agree with the ``pointwise'' limit of \eqref{eq:SBV limit}.
To obtain the lower bound of the $\Gamma_{AS}$-limit,
we first prove some results on the area-strict convergence of BV functions using
\emph{generalised Young measures} as a tool. After this, we are able to use
the lower bound \eqref{eq:sharp lower bound}, combined with
a (probably nonoptimal) $\Gamma$-convergence result from the recent paper \cite{MaGo}.
As is often the case, the upper bound is somewhat easier, and it requires 
constructing a sequence of functions for which the Cantor part is replaced by a jump part.
Both directions also utilize a commonly used slicing argument, in which the problem
is essentially reduced to the one-dimensional case.
\\

	\textbf{Acknowledgments.}
	The authors wish to thank Jan Kristensen for helpful discussions,
    and the anonymous referees for their careful reading and helpful comments, which have greatly improved the manuscript.


\section{Definitions}\label{sec:definitions}

\subsection{Basic notations and definitions}\label{subsec:def}

We work in the Euclidean space $\R^n$ with $n\ge 1$.
The Euclidean norm is denoted by $|\cdot|$, and
the $n$-dimensional Lebesgue outer measure by $\Le^n$. 
By $\Om$ we always mean an open subset of $\R^n$.
We denote the characteristic function of a set $A\subset\R^n$ by $\mathbbm{1}_A\colon \R^n\to \{0,1\}$.
The sets of finite non-negative Radon measures and probability measures on an open set $\Om\subset \R^n$ are
denoted by $\M^+(\Om)$ and $\M^1(\Om)$, respectively.
Given $l\in\N$, the set of $\R^l$-valued Radon measures,
possibly unbounded, is denoted by $\M(\Om;\R^l)$.
Weak* convergence $\mu_i\overset{*}{\rightharpoondown}\mu$ in $\M(\Om;\R^l)$ means
$\int_\Om\phi\id \mu_i\to \int_\Om\phi\id \mu$ for any
continuous function $\phi$ that is compactly supported in $\Om$ (in short, $\phi\in C_c(\Om)$).
For any
$\mu \in \M^+(\R^l)$ with
$\int_{\R^l} |z|\id \mu(z)<\infty$, its barycentre is defined by
    \[ 
    \bar{\mu} := \int_{\R^l} z\id \mu(z).
    \]

The theory of $\BV$ functions that we rely on can be found in the monograph
Ambrosio--Fusco--Pallara \cite{AFP}.
A function
$u\in L^1(\Omega)$ is of bounded variation,
denoted $u\in \BV(\Omega)$, if its weak derivative
is an $\R^{n}$-valued Radon measure with finite total variation. This means that
there exists a (necessarily unique) $\R^n$-valued Radon measure $Du$ 
such that for all $\varphi\in C_c^1(\Omega)$, the integration-by-parts formula
\[
\int_{\Omega}u\frac{\partial\varphi}{\partial x_k}\id x
=-\int_{\Omega}\varphi\id D_ku ,\quad k=1,\ldots,n,
\]
holds.
The total variation of $Du$ is denoted by $|Du|$.
If we do not know a priori that
$u\in L^1_{\loc}(\Om)$
is a BV function, then we consider
\[
\Var(u,\Om):=\sup\left\{\int_{\Om}u\dive\varphi\id x,\,\varphi\in C_c^{1}(\Om;\R^{n}),
\,|\varphi|\le 1\right\}.
\]
If $\Var(u,\Om)<\infty$, then the $\R^{n}$-valued Radon measure $Du$
exists and $\Var(u,\Om)=|Du|(\Om)$
by the Riesz representation theorem, and we denote $u\in \dot{\BV}(\Om)$.
Moreover, $u\in\BV(\Om)$ provided that also $u\in L^1(\Om)$.
The BV norm is defined by
\[
\Vert u\Vert_{\BV(\Om)}:=\Vert u\Vert_{L^1(\Om)}+\Var(u,\Om).
\]

Denote by $S_u\subset \Om$ the set of non-Lebesgue points of $u\in \BV_{\loc}(\Om)$.
We write the Radon-Nikodym decomposition of the variation measure of $u$ into the absolutely continuous and singular parts with respect to $\Le^n$
as
\[
Du=D^a u+D^s u.
\]
We denote the density of $D^a u$ by $\nabla u$.
Furthermore, we define the jump and Cantor parts of $Du$ as
\[
D^j u\coloneqq D^s u\mres S_u, \qquad D^c u\coloneqq  D^s u\mres (\Om\setminus S_u),
\]
where
\[
D^s u \mres S_u(A):=D^s u (S_u\cap A),\quad \textrm{for } D^s u\textrm{-measurable } A\subset \Om.
\]
Thus we get the decomposition
\[
Du=D^a u+ D^c u+ D^j u.
\]
If $\Var(u,\Om)=\infty$, we interpret the quantity $|Du|(\Om)$, or a sum such as
	\[
\frac{C_n}{\gamma}|D^au|(\Om)+ \frac{C_n}{\gamma+1}|D^s u|(\Om),
\]
to be infinite.

	\begin{definition}\label{def:strict conv}
	Let $u,u_\lambda\in L^1_{\loc}(\Om)$ for $\lambda>0$.
	We say that $u_\lambda\to u$ strictly in $\Om$ if $u_{\lambda}\to u$ in $L^1_{\loc}(\Om)$ and 
	\[
	\lim_{\lambda\to\infty}|D u_\lambda|(\Om)
	=|D u|(\Om).
	\]
	\end{definition}
    \par Denote the function $(1+\abss{\cdot}^2)^{\frac{1}{2}}$ by $E(\cdot)$.

	\begin{definition}\label{def:area strict}
	Let $u,u_\lambda\in L^1_{\loc}(\Om)$ for $\lambda>0$.
	We say that $u_\lambda\to u$ area-strictly in $\Om$ if $u_{\lambda}\to u$ in $L^1_{\loc}(\Om)$ and 
	\[
	\lim_{\lambda\to\infty}\int_{\Om}E(Du_{\lambda})
	=\int_{\Om}E(Du),
	\] where for any $v \in \dot{\BV}(\Om)$ the integral is defined by
    \[
    \int_{\Omega} E(Dv) := \int_{\Omega} E(\nabla v)\id x + \abss{D^s v}(\Omega).
    \]
	\end{definition}
	
	\subsection{$\Gamma$-convergence}
	
	The usual notion of $\Gamma$-convergence in this context is defined with respect to $L^1_{\loc}$-convergence, as follows.
	As usual, $\Om\subset \R^n$ is an open set.
	
	\begin{definition}
		Functionals $\Phi_{\lambda}\colon L^1_{\loc}(\Om)\to [0,\infty]$, with $\lambda>0$,
		are said to $\Gamma$-converge to a functional $\Phi\colon L^1_{\loc}(\Om)\to [0,\infty]$
		as $\lambda\to \infty$
		if the following two properties hold:
		\begin{enumerate}
			\item For every $u\in L_{\loc}^1(\Om)$ and for every family $\{u_\lambda\}_{\lambda>0}$ such that
			$u_\lambda\to u$ in $L^{1}_{\loc}(\Om)$ as $\lambda\to \infty$, we have
			\[
			\liminf_{\lambda\to \infty}\Phi_{\lambda}(u_{\lambda},\Om)\ge \Phi(u,\Om).
			\]
			\item For every $u\in L_{\loc}^1(\Om)$, there exists a family $\{u_\lambda\}_{\lambda>0}$ such that
			$u_{\lambda}\to u$ in $L_{\loc}^1(\Om)$ as $\lambda\to \infty$, and
			\[
			\limsup_{\lambda\to \infty}\Phi_{\lambda}(u_{\lambda},\Om)\le \Phi(u,\Om).
			\]
		\end{enumerate}
	\end{definition}

	We also consider $\Gamma$-convergence defined with respect to area-strict convergence, as follows.
			
	\begin{definition}
		Functionals $\Phi_{\lambda}\colon L^1_{\loc}(\Om)\to [0,\infty]$, with $\lambda>0$,
		are said to $\Gamma_{\mathrm{AS}}$-converge to a functional
		$\Phi\colon L^1_{\loc}(\Om)\to [0,\infty]$
		as $\lambda\to \infty$
		if the following two properties hold:
		\begin{enumerate}
			\item For every $u\in L^1_{\loc}(\Om)$ and for every family $\{u_\lambda\}_{\lambda>0}$ such that
			$u_\lambda\to u$ area-strictly as $\lambda\to \infty$, we have
			\[
			\liminf_{\lambda\to \infty}\Phi_{\lambda}(u_{\lambda},\Om)\ge \Phi(u,\Om).
			\]
			\item For every $u\in L^1_{\loc}(\Om)$, there exists a family $\{u_\lambda\}_{\lambda>0}$ such that
			$u_\lambda\to u$ area-strictly as $\lambda\to \infty$, and
			\[
			\limsup_{\lambda\to \infty}\Phi_{\lambda}(u_{\lambda},\Om)\le \Phi(u,\Om).
			\]
		\end{enumerate}
	\end{definition}

As mentioned before, the existence of a $\Gamma$-limit (defined with respect to $L^1_{\loc}$-convergence)
of the functionals $F_{\gamma,\lambda}(\cdot,\Om)$ (recall that we always assume $\gamma>0$)
is  currently not known.
Our main theorem, Theorem \ref{thm:Gamma area strict limit},
says that the $\Gamma_{AS}$-limit of the functionals $F_{\gamma,\lambda}(\cdot,\Om)$ 
exists
and is given by
\[
\frac{C_n}{\gamma}|D^au|(\Om)+ \frac{C_n}{\gamma+1}|D^s u|(\Om).
\]
In Sections \ref{sec:lower bound} and \ref{sec:upper bound},
we will prove the lower and upper bounds, respectively, of the $\Gamma_{AS}$-convergence.

\subsection{Generalised Young measures}\label{subsec: Young}
    \par In this subsection, we introduce the notion of generalised Young measures and some
    related results, which will be helpful in the subsequent proofs. For a systematic discussion
    on this topic, see \cite{Rindler, KR20}.
    \par The set $\Omega\subset \R^n$ is assumed to be a bounded open set with $\Le^n(\partial \Om)=0$ throughout this subsection.
    Fix a positive integer $m$,
    and denote by $\B^m$ and $\SP^{m-1}$ the open unit ball and the unit sphere in $\R^m$,
    respectively. Consider the linear transformation $S$ on $C(\overline{\Omega}\times \B^m)$
    given by 
    \[ 
    (Sf)(x,z):= (1-\abss{z})f\brac{x,\frac{z}{1-\abss{z}}}, 
    \quad x\in \overline{\Omega},\, z\in \B^m, 
    \] 
    for any $f \in C(\overline{\Omega}\times \R^m)$. 
    Then set 
    \[ 
    \EB(\Omega;\R^m) := \{f \in C(\overline{\Omega}\times \R^m)\col Sf \in C(\overline{\Omega}\times \overline{\B}^m) \},
    \] where by $Sf \in C(\overline{\Omega}\times \overline{\B}^m)$ we mean that $Sf$ can be continuously extended to $\overline{\Omega}\times \partial \B^m$. 
    The norm on $\EB (\Omega;\R^m)$ is given by the natural choice
    \[
    \norm{f}_{\EB(\Omega;\R^m)}:= \norm{Sf}_{C(\overline{\Omega}\times \overline{\B}^m)} 
    = \sup_{(x,z)\in \overline{\Omega}\times \overline{\B}^m} \abss{(Sf)(x,z)},
    \]
    under which $\EB(\Omega;\R^m)$ is a Banach space and
    $S \col \EB(\Omega;\R^m) \to C(\overline{\Omega}\times \overline{\B}^m)$ is
    an isometric isomorphism (\cite{Rindler}, \S 12.1 and \cite{KR20}, \S 2).
	\par If $f \in \EB(\Omega;\R^m)$, the {\it recession function} defined by
		\begin{equation}\label{eq:lnflrecession}
		f^{\infty}(x,z) := 
        \lim_{\substack{(x',z')\to (x,z)\\ t \to \infty}} \frac{f(x',tz')}{t}
		\end{equation}
        exists for any $(x,z) \in \overline{\Omega}\times \R^m$. The function $f^{\infty}$ is
        positively $1$-homogeneous in the second argument and coincides with $Sf$ on
        $\overline{\Omega}\times \partial \B^m$. The class $\EB(\R^m)$ consists of autonomous
        integrands (i.e., integrands only depending on $z \in \R^m$) and is defined analogously.
    \par For any integrand $f \in \EB(\Om;\R^m)$, we can define the corresponding integral
    on any $\R^m$-valued Radon measure $\mu\in\M(\Om;\R^m)$
    by
    \[
    \int_{A}f(x,\mu) := \int_A f\brac{\cdot ,\frac{\id \mu}{\id \Le^n}}\id \Le^n + \int_Af^{\infty}\brac{\cdot ,\frac{\id \mu}{\id\abss{\mu^s}}}\id \abss{\mu^s}
    \]
    for any Borel set $A \Subset \Om$. This is actually the area-strictly continuous extension of $\int f(x,\nabla u)\id x$ on $W^{1,1}(\Omega; \R^m)$ by the Reshetnyak continuity theorem (\cite{Res} and \cite[Theorem 2.39]{AFP}) and it is consistent with the quantity $\int_{\Om} E(Dg)$ in Definition \ref{def:area strict}.

    \par Now we give the definition of generalised Young measures.
    Recall that in this subsection, $\Omega \subset \R^n$ is a bounded open set. 
    \begin{definition}
	An $\R^m$-valued
    \emph{generalised Young measure} defined on $\Omega$ is a triple $\nu=((\nu_x)_{x\in \Omega}, \lambda_{\nu},(\nu_x^{\infty})_{x\in \overline{\Omega}})$, where
		\begin{enumerate}[label=(\roman*)]
		\item $(\nu_x)_{x\in \Omega} \subset \M^1 (\R^m)$ is the {\it oscillation measure};
		\item $\lambda_{\nu} \in \M^+(\overline{\Omega})$ is the {\it concentration measure};
		\item $(\nu_x^{\infty})_{x\in \overline{\Omega}}\subset \M^1(\SP^{m-1})$ is the {\it concentration-direction measure};
		\end{enumerate} and it satisfies:
		\begin{enumerate}[label=(\roman*)]\setcounter{enumi}{3}
		\item the map $x\mapsto \nu_x$ is weakly$^{\ast}$ measurable with respect to $\Le^n \mres \Omega$, i.e., the function $x\mapsto \brangle{\nu_x, f(x,\cdot)}:=\int_{\R^m}f(x,\cdot)\id \nu_x$
        is $\Le^n$-measurable for any bounded Borel function $f\col \Omega\times \R^m \to \R$;
		\item the map $x\mapsto \nu^{\infty}_x$ is weakly$^{\ast}$ measurable with respect to $\lambda_{\nu}$, i.e., the function $x\mapsto \brangle{\nu^{\infty}_x, f(x,\cdot)}$ is $\lambda_{\nu}$-measurable for any bounded Borel function $f\col \overline{\Omega}\times \SP^{m-1} \to \R$;
		\item $x\mapsto \brangle{\nu_x,\abss{\cdot}} \in L^1(\Omega)$, i.e., the $1$-moment $\int_{\Omega}\int_{\R^m}\abss{z}\id \nu_x \id x$ is finite.
		\end{enumerate} The collection of all $\R^m$-valued generalised Young measures on $\Omega$ is denoted by $\YM(\Omega;\R^m)$.
	\end{definition}
    \par The functions in $\EB(\Omega;\R^m)$ can be considered as ``test integrands" for Young measures
    (for simplicity, we sometimes omit the word ``generalised''). For $f\in \EB(\Omega;\R^m)$ and $\nu \in \YM(\Omega;\R^m)$, the duality pairing is defined by
	\[
	\Brangle{\nu,f} := \int_{\Omega}\brangle{\nu_x, f(x,\cdot)}\id x + \int_{\overline{\Omega}} \brangle{\nu_x^{\infty},f^{\infty}(x,\cdot)}\id \lambda_{\nu}(x).
	\]
    Correspondingly, we have $\YM(\Omega;\R^m) \subset (\EB(\Omega;\R^m))^{\ast}$.

    \begin{definition}
	\begin{enumerate}[label=(\roman*)]
	\item Given any finite Radon measure $\gamma \in \M(\overline{\Omega};\R^m)$ with the Lebesgue-Radon-Nikod\'{y}m decomposition 
    \[ \gamma = \gamma^{a}\Le^n \mres \Omega + \gamma^s, \]
    we associate to it an {\it elementary Young measure} $\varepsilon_{\gamma}\in \YM(\Omega;\R^m)$, which is given by
		\begin{equation}
		\varepsilon_{\gamma}= ((\delta_{\gamma^{a}(x)})_{x\in \Omega},\abss{\gamma^s},(\delta_{P(x)})_{x\in \overline{\Omega}}),
		\end{equation} where $\delta_z$ is the Dirac measure supported at $z\in \R^m$ and $P:= \frac{\id \gamma^s}{\id \abss{\gamma^s}}$.
	\item Given a sequence $\{\gamma_j\}_{j\in \N}\subset \M(\overline{\Omega};\R^m)$
    with $\sup_j \abss{\gamma_j}(\overline{\Omega})<\infty$, we say that $\{\gamma_j\}$
    {\it generates} $\nu\in \YM(\Omega;\R^m)$ (written as $\gamma_j \overset{\Y}{\to} \nu$)
    if $\varepsilon_{\gamma_j} \overset{\ast}{\rightharpoonup} \nu$ in $\YM(\Omega;\R^m)$,
    i.e., 
    \[
    \Brangle{\varepsilon_{\gamma_j},f}\to \Brangle{\nu,f} \quad \mbox{as }j\to \infty
    \] 
    for any $f \in \EB(\Omega;\R^m)$.
	\end{enumerate}
	\end{definition} 

    \par The following result, which is considered to be the fundamental theorem of
    generalised Young measures, implies the precompactness of
    $\M(\overline{\Omega};\R^m)\subset\YM(\Omega;\R^m)$ under the weak$^{\ast}$ topology.
    Recall that $\Omega\subset \R^n$ is assumed to be a bounded open set throughout this subsection.
	\begin{theorem}[\cite{Rindler}, Theorem 12.5, \cite{AB}, Theorem 5]\label{thm:YMft}
	 Suppose that $\{\gamma_j\}_{j\in \N} \subset \M(\overline{\Omega};\R^m)$ is a sequence
     of Radon measures with $\sup_j \abss{\gamma_j}(\overline{\Omega})<\infty$. Then there
     exists a subsequence $\{\gamma_{j_k}\}_{k\in \N}$ and a Young measure
     $\nu \in \YM(\Omega;\R^m)$ such that $\gamma_{j_k} \overset{\Y}{\to}\nu$.
	\end{theorem}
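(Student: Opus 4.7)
The plan is to invoke the Banach--Alaoglu theorem in the dual of $\EB(\Omega;\R^m)$, then use the isometric isomorphism $S\col \EB(\Omega;\R^m)\to C(\overline{\Omega}\times\overline{\B}^m)$ to interpret the weak$^*$ limit as a positive Radon measure on $\overline{\Omega}\times\overline{\B}^m$, and finally disintegrate this measure to read off the triple $((\nu_x)_{x\in\Omega},\lambda_\nu,(\nu_x^\infty)_{x\in\overline\Omega})$.

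First, I would check that the elementary Young measures $\varepsilon_{\gamma_j}$ form a norm-bounded sequence in $(\EB(\Omega;\R^m))^*$. This is a direct estimate: for any $f\in\EB(\Omega;\R^m)$ the pairing $\Brangle{\varepsilon_{\gamma_j},f}$ is expressible through $Sf$ evaluated on $\overline{\Omega}\times\overline{\B}^m$, and the total mass against which $Sf$ is integrated is controlled by $\Le^n(\overline{\Omega})+\abss{\gamma_j}(\overline{\Omega})$. Since $\EB(\Omega;\R^m)$ is separable (being isometric via $S$ to a closed subspace of the separable space $C(\overline{\Omega}\times\overline{\B}^m)$), the sequential Banach--Alaoglu theorem furnishes a subsequence $\varepsilon_{\gamma_{j_k}}\overset{\ast}{\rightharpoonup} T$ in $(\EB(\Omega;\R^m))^*$. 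Pulling $T$ back through $S^{-*}$ and applying the Riesz representation theorem produces a finite signed Radon measure $\mu$ on $\overline{\Omega}\times\overline{\B}^m$; positivity of $\mu$ follows because each $\varepsilon_{\gamma_{j_k}}$ acts nonnegatively on integrands $f$ with $Sf\ge 0$, and this passes to the limit.

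The main obstacle is disintegrating $\mu$ into the three required components. I would project $\mu$ onto the first factor to obtain $\Lambda\in\M^+(\overline{\Omega})$, decompose $\Lambda=\rho\,\Le^n\mres\Omega+\Lambda^s$ by Lebesgue--Radon--Nikod\'ym, and then disintegrate $\mu$ along the projection $(x,z)\mapsto x$ to get a weakly$^*$-measurable family $(\sigma_x)_{x\in\overline{\Omega}}\subset\M^1(\overline{\B}^m)$. For $\Le^n$-a.e.\ $x\in\Omega$, the restriction $\sigma_x\mres\B^m$, after reweighting by $(1-\abss{\cdot})^{-1}$ and pushing forward under $z\mapsto z/(1-\abss{z})$, yields (upon renormalization) the oscillation measure $\nu_x\in\M^1(\R^m)$; the concentration measure $\lambda_\nu\in\M^+(\overline{\Omega})$ is assembled from $\rho\cdot\sigma_x(\partial\B^m)\,\Le^n\mres\Omega$ together with $\Lambda^s$; and the concentration-direction measures $\nu_x^\infty\in\M^1(\SP^{m-1})$ come from renormalizing $\sigma_x\mres\partial\B^m$. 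Measurability of both families is inherited from the disintegration theorem, the $1$-moment condition $x\mapsto\brangle{\nu_x,\abss{\cdot}}\in L^1(\Omega)$ follows by testing $T$ against the integrand $f(x,z)=\abss{z}$ (whose recession is itself $\abss{z}$), and the probability-measure properties are verified by testing against integrands approximating the constant $1$ on $\R^m$ and on $\SP^{m-1}$. To conclude, one checks $\Brangle{\nu,f}=\Brangle{T,f}$ for every $f\in\EB(\Omega;\R^m)$ by observing that both sides equal $\int Sf\id\mu$ on the dense subclass of tensor-product integrands of the form $\varphi(x)g(z)$, after which a density argument in $\EB(\Omega;\R^m)$ gives $\gamma_{j_k}\overset{\Y}{\to}\nu$. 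The most delicate point is the joint measurability of $x\mapsto\nu_x^\infty$ across the absolutely continuous and singular parts of $\lambda_\nu$: one must split $\sigma_x$ between $\B^m$ and $\partial\B^m$ in a measurable way so that the pairing recombines into the single formula demanded by the definition of $\Brangle{\nu,f}$.
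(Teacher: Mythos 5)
The paper does not prove this theorem but cites it verbatim from Rindler's book, so the comparison is against that standard proof. Your high-level strategy --- push the elementary Young measures through the isometry $S$ into $C(\overline{\Omega}\times\overline{\B}^m)^{\ast}$, extract a weak$^{\ast}$ limit via sequential Banach--Alaoglu and the Riesz representation theorem, obtain a positive measure $\mu$ on $\overline{\Omega}\times\overline{\B}^m$, and disintegrate it over $\overline{\Omega}$ --- is exactly Rindler's argument. Your preliminary checks (norm boundedness with bound $\Le^n(\Omega)+\sup_j\abss{\gamma_j}(\overline{\Omega})$, separability of $\EB$, positivity passing to the limit, $1$-moment via the integrand $\abss{z}$, probability mass via integrands approximating $1$) are all sound.

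There is, however, a concrete error in your recipe for the oscillation measure, and it is not merely cosmetic. Working in $z$-coordinates, $f(x,z/(1-\abss{z}))=(1-\abss{z})^{-1}(Sf)(x,z)$, and matching $\int_{\Omega}\brangle{\nu_x,f(x,\cdot)}\id x$ against $\int_{\Omega\times\B^m}(Sf)\id\mu$ forces $\nu_x$ to be the pushforward of $\rho(x)\,(1-\abss{z})\,\sigma_x\mres\B^m$ under $z\mapsto z/(1-\abss{z})$, where $\rho=\id\Lambda/\id\Le^n$. You have the reciprocal weight $(1-\abss{\cdot})^{-1}$, and renormalization does not repair this because the weight is $z$-dependent: with $\sigma_x=p\delta_{z_1}+q\delta_{z_2}$ ($z_1,z_2\in\B^m$, $\abss{z_1}\neq\abss{z_2}$) your recipe produces a measure with the wrong ratio of masses at the two atoms, while the correct weight $(1-\abss{z})$ recovers the true oscillation measure (one can check this concretely on a fine two-value oscillation $\gamma_j^a\in\{A_1,A_2\}$, where the correct answer is $\tfrac12\delta_{A_1}+\tfrac12\delta_{A_2}$ but your formula gives weights proportional to $(1+\abss{A_i})^2$). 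A second point you gesture at but do not settle: to split $\Brangle{\nu,f}$ as in the definition you need $\mu\mres(\overline{\Omega}\times\B^m)$ to have $x$-marginal absolutely continuous with respect to $\Le^n$ --- otherwise there is a singular-in-$x$ contribution concentrated inside $\B^m$ which the triple $(\nu_x,\lambda_\nu,\nu_x^\infty)$ cannot encode. This follows from the uniform bound: on $\overline{\Omega}\times\{\abss{z}\le 1-\eps\}$ each $\mu_{\gamma_j}$ has $x$-marginal bounded by $\eps^{-1}\Le^n\mres\Omega$, and this passes to the weak$^{\ast}$ limit; then let $\eps\downarrow0$. Both of these points are handled carefully in Rindler's proof, and both need to be fixed/supplied for your outline to constitute a proof.
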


    \par Given any $u\in \BV(\Omega)$, we can associate to
    $Du \in \M(\Omega;\R^n)$ the elementary Young measure $\varepsilon_{Du}$ defined as above,
    and define the class of BV Young measures: 
		\begin{multline*}
		\BVY(\Omega;\R^n):= \{\nu \in \YM(\Omega;\R^n)\col \mbox{ there exists } \{u_j\}
        \subset \BV(\Omega)  \mbox{ with } Du_j \overset{\Y}{\to}\nu \}.
		\end{multline*}
	Then Theorem \ref{thm:YMft} implies the following:
    \begin{corollary}\label{cor:YMft}
	Suppose that $\{u_j\}_{j\in \N} \subset \BV(\Omega)$ is bounded.
    Then there exists a subsequence $\{u_{j_k}\}_{k\in \N}$ and $\nu \in \BVY(\Omega)$
    such that
    \[ 
    Du_{j_k}\overset{\Y}{\to}\nu .
    \]
	\end{corollary}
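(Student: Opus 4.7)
The plan is to apply the fundamental theorem for generalised Young measures, Theorem \ref{thm:YMft}, to the sequence of derivatives $\{Du_j\}_{j\in\N}$, and then simply unpack the definition of $\BVY(\Omega)$.

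First, since $\{u_j\}$ is bounded in $\BV(\Omega)$, one has
\[
M := \sup_{j\in\N} |Du_j|(\Omega) \le \sup_{j\in\N} \norm{u_j}_{\BV(\Omega)} < \infty.
\]
Each $Du_j \in \M(\Omega;\R^n)$ is extended by zero across $\partial\Omega$ to yield an element of $\M(\overline{\Omega};\R^n)$, which does not change its total variation. The resulting sequence $\{Du_j\} \subset \M(\overline{\Omega};\R^n)$ therefore satisfies $\sup_j |Du_j|(\overline{\Omega}) = M < \infty$, which is precisely the hypothesis required by Theorem \ref{thm:YMft} with $m=n$.

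Applying that theorem yields a subsequence $\{Du_{j_k}\}_{k\in\N}$ and a generalised Young measure $\nu \in \YM(\Omega;\R^n)$ with $Du_{j_k} \overset{\Y}{\to} \nu$. Because the sequence $\{u_{j_k}\}_{k\in\N}$ is again contained in $\BV(\Omega)$ and its derivatives generate $\nu$, the defining condition of $\BVY(\Omega)$ is verified, so $\nu \in \BVY(\Omega)$, as required. There is no substantive obstacle in this argument; it is a direct combination of Theorem \ref{thm:YMft} with the definition of $\BVY$, and this is presumably why the statement appears as a corollary rather than a theorem.
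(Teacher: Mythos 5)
Your argument is correct and is exactly how the paper arrives at this corollary: apply Theorem \ref{thm:YMft} to the bounded sequence of measures $\{Du_j\}$ (viewed in $\M(\overline{\Omega};\R^n)$), extract a generating subsequence, and observe that the resulting Young measure belongs to $\BVY(\Omega)$ by definition. The paper states this without proof precisely because it is this direct combination of Theorem \ref{thm:YMft} with the definition of $\BVY(\Omega)$.
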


    \par The representation of limits via Young measures can be extended to a larger class of integrands. A function $f: \overline{\Om} \times \R^m \to \R$ is Carath\'{e}odory if $f(\cdot\, , z)$ is measurable for every $z\in \R^m$ and $f(x,\cdot\,)$ is continuous for $\Le^n$-a.e. $x\in \overline{\Omega}$. Such a function $f$ is said to be of linear growth if $\abss{f(x,z)} \leq L(1+\abss{z})$ for some $L>0$ and any $(x,z) \in \overline{\Omega}\times \R^m$. Then define
        \begin{multline*}
            \mathbf{R}(\Omega;\R^m):= \{ f: \overline{\Om} \times \R^m \to \R\col f \mbox{ is Carath\'{e}odory and of linear growth,} \\\mbox{ and }f^{\infty}\in C(\overline{\Omega}\times \R^m) \mbox{ exists} \}.
        \end{multline*}

    \begin{proposition}[\cite{Rindler}, Proposition 12.11] \label{prop:YM Caratheodory}
        Suppose that $\nu_j \weaksto \nu$ in $\YM(\Omega;\R^m)$ and assume that
            $f \in \mathbf{R}(\Om;\R^m)$.
       Then $\Brangle{\nu_j,f}\to \Brangle{\nu,f}$.
    \end{proposition}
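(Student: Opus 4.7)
The plan is to recast the pairing as a weak* convergence statement for finite positive Radon measures on the compact metric space $\overline\Omega\times\overline{\B}^m$ via the $S$-transform, and then apply a Portmanteau-type theorem for bounded, $\mu$-a.e.\ continuous integrands. For $\sigma\in\YM(\Omega;\R^m)$, I associate the finite positive Radon measure $\mu_\sigma\in\M^+(\overline\Omega\times\overline{\B}^m)$ prescribed by
\[
\int_{\overline\Omega\times\overline{\B}^m} g\id\mu_\sigma \;:=\; \Brangle{\sigma,\,S^{-1}g}\qquad\text{for every }g\in C(\overline\Omega\times\overline{\B}^m),
\]
where $S^{-1}g(x,w):=(1+|w|)\,g(x,w/(1+|w|))\in\EB(\Omega;\R^m)$. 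Unfolding the right-hand side gives the decomposition $\mu_\sigma=\mu_\sigma^{\mathrm{osc}}+\mu_\sigma^{\mathrm{conc}}$, where $\mu_\sigma^{\mathrm{osc}}$ is the pushforward of $(1+|z|)\,\sigma_x\otimes\id x$ under $(x,z)\mapsto(x,\tfrac{z}{1+|z|})$---hence supported in $\overline\Omega\times\B^m$ and absolutely continuous in $x$ with respect to $\Le^n$---while $\mu_\sigma^{\mathrm{conc}}=\sigma_x^\infty\otimes\id\lambda_\sigma$ is supported on $\overline\Omega\times\SP^{m-1}$. A direct change-of-variables computation on each piece extends the defining identity to the key formula
\[
\int_{\overline\Omega\times\overline{\B}^m} Sf\id\mu_\sigma \;=\; \Brangle{\sigma,f}\qquad\text{for every }f\in\mathbf R(\Omega;\R^m),
\]
where $Sf$ is extended to $\overline\Omega\times\SP^{m-1}$ by $f^\infty$.

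Since $S^{-1}\colon C(\overline\Omega\times\overline{\B}^m)\to\EB(\Omega;\R^m)$ is an isometric isomorphism, the hypothesis $\nu_j\weaksto\nu$ in $\YM(\Omega;\R^m)$ translates directly to the weak* convergence $\mu_{\nu_j}\weaksto\mu_\nu$ in $\M^+(\overline\Omega\times\overline{\B}^m)$; in particular the total masses converge (test against $g\equiv 1$). For the integrand $Sf$: linear growth of $f$ gives $\|Sf\|_\infty\le C$, the Carath\'eodory structure of $f$ yields that $Sf$ is Carath\'eodory on $\overline\Omega\times\overline{\B}^m$, and the continuity of $f^\infty$ makes $Sf$ jointly continuous on the boundary $\overline\Omega\times\SP^{m-1}$. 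Hence the discontinuity set of $Sf$ lies in $N_f\times\B^m$, where $N_f\subset\overline\Omega$ is $\Le^n$-null. Combined with absolute continuity of $\mu_\nu^{\mathrm{osc}}$ in $x$ and the fact that $\mu_\nu^{\mathrm{conc}}$ is supported on $\overline\Omega\times\SP^{m-1}$, one concludes $\mu_\nu(N_f\times\B^m)=0$, i.e.\ $Sf$ is $\mu_\nu$-a.e.\ continuous.

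The Portmanteau theorem for bounded, a.e.\ continuous integrands on a compact metric space (valid here by weak* convergence plus matching total masses) now gives $\int Sf\id\mu_{\nu_j}\to\int Sf\id\mu_\nu$, which by the identity above is precisely $\Brangle{\nu_j,f}\to\Brangle{\nu,f}$. The main technical obstacle is verifying the joint continuity of $Sf$ on $\overline\Omega\times\SP^{m-1}$ when $f\in\mathbf R(\Omega;\R^m)$ rather than $f\in\EB$: for sequences $(x_k,z_k)\to(x,z)$ with $|z_k|<1$ and $|z|=1$, one must show $(1-|z_k|)f(x_k,z_k/(1-|z_k|))\to f^\infty(x,z)$, which follows from the continuity of $f^\infty$, the linear-growth bound, and the hypothesis that $f^\infty$ exists as the $\limsup$-limit of $f(x'_k,t z'_k)/t$ along all approximating sequences in the class $\mathbf R$.
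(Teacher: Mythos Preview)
The paper does not supply a proof of this proposition; it is stated purely as a citation of \cite{Rindler}, Proposition 12.11. Your argument---passing via the $S$-transform to finite positive Radon measures $\mu_\sigma$ on the compact space $\overline\Omega\times\overline{\B}^m$, translating the hypothesis into weak* convergence $\mu_{\nu_j}\weaksto\mu_\nu$, and then invoking a Portmanteau-type theorem for bounded, $\mu_\nu$-a.e.\ continuous integrands---is precisely the standard route and is essentially Rindler's own proof.

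One point worth sharpening: the joint continuity of $Sf$ at \emph{every} boundary point $(x,z)\in\overline\Omega\times\SP^{m-1}$ (including $x$ in the Lebesgue-null exceptional set $N_f$) genuinely requires that $f^\infty$ exist as a \emph{limit}, not merely that the $\limsup$ in \eqref{eq:lnflrecession} defines a continuous function. If one only knows that the $\limsup$ is continuous, one obtains $\limsup_k Sf(x_k,z_k)\le f^\infty(x,z)$ but not the matching lower bound, and the Portmanteau step would fail. The defining feature of the class $\mathbf R$ in \cite{Rindler} is exactly that the strong recession function exists as a limit along all approaching sequences, so your conclusion is correct; but your phrase ``exists as the $\limsup$-limit \dots\ along all approximating sequences'' obscures this crucial distinction and should be replaced by an explicit appeal to the limit.
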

    
\section{Lower bound}\label{sec:lower bound}
	
	In this section we prove the lower bound for the $\Gamma_{AS}$-convergence of the functionals
	$F_{\gamma,\lambda}(\cdot,\Om)$.

\subsection{Area-strict and norm convergence}

First we will investigate the relationship between area-strict convergence and convergence
in the BV norm, using generalised Young measures as a tool.
The results of this subsection will only be used in the case $n=1$, but we prove them for general $n$.

Recall that $E(\cdot)$ denotes the function $(1+\abss{\cdot}^2)^{\frac{1}{2}}$. 

	\begin{lemma}\label{lem:Eseparation}
    The function $h(t):= E(t)-t$ defined on $[0,\infty)$ is strictly decreasing.
    In particular, we have
    \[
    E(t)-t \geq E(s)-s \quad \mbox{for any } 0\le t\le s<\infty,
    \]
    and
    equality holds if and only if $t=s$.
    \end{lemma}
    \begin{proof}
    We have
    \[
    h^{\prime}(t) = E^{\prime}(t)-1 =\frac{t}{E(t)}-1 <0
    \quad \mbox{for all } 0\le t<\infty.
    \]
    \end{proof}

\begin{lemma}\label{lem:areastrYM}
Suppose that $\Omega \subset \R^n$ is a bounded open set with $\Le^n(\partial \Omega)=0$,
and a sequence $\{u_j\} \subset \dot{\BV}(\Omega)$ converges to $u\in \dot{\BV}(\Omega)$
in the area-strict sense.
Then $\{D u_j\}$ generates a Young measure $\nu = ((\nu_x)_{x\in \Omega}, \lambda_{\nu}, (\nu_x^{\infty})_{x\in \overline{\Omega}}) \in Y^{\mathscr{M}}(\Omega;\R^n)$, which satisfies the following:
    \begin{align}
        &\nu_x = \delta_{\bar{\nu}_x}\quad  \mbox{for } \Le^n\mbox{-a.e. }x\in \Omega,\nonumber\\
        &\lambda_{\nu} \perp \Le^n,\quad  \lambda_{\nu}\mres \partial \Omega =0,\nonumber\\
        &\nu_x^{\infty} = \delta_{\bar{\nu}_x^{\infty}}\quad \mbox{for } \lambda_{\nu}\mbox{-a.e. }x\in \Omega.\label{eq:ASYM3}
    \end{align}
    Moreover, there hold
    \begin{align}
        &\nabla u(x) = \bar{\nu}_x\quad \mbox{for } \Le^n\mbox{-a.e. }x\in \Omega, \label{eq:ASYM4}\\
        &D^s u = \bar{\nu}_{\cdot}^{\infty}\lambda_{\nu}.\notag
    \end{align}
\end{lemma}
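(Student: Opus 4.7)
The plan is to test the generation identity $Du_{j_k}\overset{\Y}{\to}\nu$ against two families of integrands in $\EB(\Om;\R^n)$: linear ones of the form $(x,z)\mapsto \phi(x)z_i$, which will identify the barycentre of $\nu$ with $Du$, and the strictly convex autonomous integrand $E$ itself, whose $1$-Lipschitz behaviour and strict convexity will then force each oscillation and concentration-direction measure to be a Dirac mass.

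For the barycentre identification, the integrand $\phi(x)z_i$ lies in $\EB(\Om;\R^n)$ for each $\phi\in C(\overline\Om)$ and $i\in\{1,\ldots,n\}$ (its $S$-transform is $\phi(x)z_i$ itself, continuous on $\overline\Om\times\overline\B^n$) and equals its own recession. The area-strict assumption bounds $|Du_j|(\Om)\le \int_\Om E(Du_j)$ uniformly, and since $u_j\to u$ in $L^1_{\loc}(\Om)$, integration by parts against $\phi\in C_c^1(\Om)$ followed by a density argument gives $\int_\Om\phi\, dDu_{j_k}\to \int_\Om\phi\, dDu$ for every $\phi\in C_c(\Om)$. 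Comparing this with the generation formula for the subsequence and decomposing $\lambda_\nu\mres\Om = w\Le^n + \lambda_\nu^\perp$ with $\lambda_\nu^\perp\perp\Le^n$, we deduce the measure identity $Du = \bar\nu_{(\cdot)}\Le^n\mres\Om + \bar\nu^\infty_{(\cdot)}\lambda_\nu\mres\Om$ on $\Om$, equivalently
\[
\nabla u = \bar\nu_x + w(x)\bar\nu_x^\infty\text{ for }\Le^n\text{-a.e.\ }x,\qquad D^s u = \bar\nu^\infty_{(\cdot)}\lambda_\nu^\perp.
\]

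For the rigidity step, observe that $E\in\EB(\R^n)$ with $E^\infty = |\cdot|$, so that $\brangle{\nu_x^\infty, E^\infty} = \int_{\SP^{n-1}}|z|\, d\nu_x^\infty = 1$. The area-strict hypothesis and the generation formula then yield
\[
\int_\Om E(\nabla u)\, dx + |D^s u|(\Om) = \int_\Om \brangle{\nu_x, E}\, dx + \lambda_\nu(\overline\Om).
\]
Substituting the barycentre identities and using $|D^s u|(\Om) = \int_\Om|\bar\nu_x^\infty|\, d\lambda_\nu^\perp$, this rearranges to
\[
\int_\Om\bigl(E(\bar\nu_x + w\bar\nu_x^\infty) - \brangle{\nu_x, E} - w\bigr)\, dx + \int_\Om(|\bar\nu_x^\infty|-1)\, d\lambda_\nu^\perp - \lambda_\nu(\partial\Om) = 0.
\]
Each of the three summands is nonpositive: Jensen's inequality gives $\brangle{\nu_x,E}\ge E(\bar\nu_x)$, which combined with the $1$-Lipschitz estimate $E(a+tb)\le E(a) + t|b|$ and $|\bar\nu_x^\infty|\le 1$ controls the first summand; the middle summand is nonpositive since $\nu_x^\infty$ is a probability measure on $\SP^{n-1}$; and the third is trivially so. All three summands must therefore vanish.

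Each vanishing equality yields one of the claimed properties. Strict convexity of $E$ upgrades the Jensen equality to $\nu_x = \delta_{\bar\nu_x}$ for $\Le^n$-a.e.\ $x$; the equality case of $|\int z\, d\nu_x^\infty|\le\int|z|\, d\nu_x^\infty=1$ gives $\nu_x^\infty = \delta_{\bar\nu_x^\infty}$ (with $|\bar\nu_x^\infty|=1$) for $\lambda_\nu^\perp$-a.e.\ $x$; and the vanishing of the third summand is precisely $\lambda_\nu(\partial\Om)=0$. The crux of the argument, and the main obstacle, is extracting $\lambda_\nu\perp\Le^n$: a direct calculation using $E(a)>|a|$ shows $E(a+tb) < E(a) + t|b|$ strictly whenever $t>0$ and $b\ne 0$, so equality in the first bracket forces either $w(x)=0$ or $\bar\nu_x^\infty=0$ Lebesgue-a.e.; but on $\{w>0\}$ the rigidity just proved gives $|\bar\nu_x^\infty|=1\ne 0$, a contradiction. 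Hence $w=0$ a.e., so $\lambda_\nu\mres\Om\perp\Le^n$, which combined with $\lambda_\nu(\partial\Om)=0$ upgrades to $\lambda_\nu\perp\Le^n$. Feeding $w=0$ back into the barycentre identities gives $\nabla u = \bar\nu_x$ a.e.\ and $D^s u = \bar\nu^\infty_{(\cdot)}\lambda_\nu$, completing the proof.
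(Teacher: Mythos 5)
Your proof is correct and follows essentially the same route as the paper's. Both arguments first identify the barycentre of the Young measure with $Du$ on $\Omega$ (the paper via weak\textsuperscript{*} convergence $Du_j \overset{*}{\rightharpoondown} Du$, you via testing the generation identity against linear integrands $\phi(x)z_i$ -- the same thing), then test against $E$ and use the area-strict hypothesis to obtain an equality, and finally extract the rigidity through Jensen's inequality plus the strict $1$-Lipschitz estimate for $E$ (the paper packages this last fact as Lemma~\ref{lem:Eseparation}; you prove the inequality $E(a+tb)<E(a)+t\abss{b}$ inline). Your presentation is slightly cleaner in collecting the defect into three visibly nonpositive summands and in exploiting $\brangle{\nu_x^\infty,\abss{\cdot}}=1$ at the outset, but the mathematical content is identical.
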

Note that this is essentially the autonomous version of Reshetnyak's continuity theorem
(see \cite{KrRind2}, Theorem 5).
\begin{proof}
    Since $\abss{Du_j}(\Om)$ is bounded, by Theorem \ref{thm:YMft}, there is a subsequence (not relabelled) that generates a Young measure $\nu \in Y^{\mathscr{M}}(\Omega;\R^n)$.
    Up to a further subsequence, we can assume $Du_j \overset{*}{\rightharpoondown} Du$ (see e.g.
    \cite[Proposition 3.13]{AFP}).
    Then for any $\varphi \in C_c^{\infty}(\Omega)$, the following holds true:
    \begin{align*}
       \int_{\Omega}\varphi \,dDu
        &=  \lim_{j\to \infty}\int_{\Omega}\varphi \,dDu_j \\
        &= \int_{\Omega}\varphi(x) \brangle{\nu_x, \, \cdot}\id x + \int_{\overline{\Omega}} 
        \varphi(x)  \brangle{\nu^{\infty}_x, \, \cdot}\id \lambda_{\nu}(x) \\
        &= \int_{\Omega} \varphi(x)(\bar{\nu}_x \id x + \bar{\nu}^{\infty}_x \id \lambda_{\nu}(x)),
    \end{align*} which implies 
    \[
        Du 
        = \bar{\nu}_{\cdot} \Le^n \mres \Omega + \bar{\nu}_{\cdot}^{\infty} \lambda_{\nu}\mres \Omega.
    \]
    Equating the absolutely continuous and singular parts with respect to $\Le^n$
    on both sides, we obtain
    \begin{align*}
        &\nabla u(x) = \bar{\nu}_x + \bar{\nu}_x^{\infty}\frac{\id \lambda_{\nu}}{\id \Le^n}(x)
        \quad \mbox{for } \Le^n\mbox{-a.e. }x\in \Omega,\\
        &D^s u = \bar{\nu}_{\cdot}^{\infty}\lambda_{\nu}^s,
    \end{align*}
    where $\lambda_{\nu}^s$ is the singular part of $\lambda_{\nu}$ with respect to $\Le^n$.
    Since $\nu$ is generated by $\{Du_j\}$, we have  
    \begin{align}
        \lim_{j\to \infty} \int_{\Omega} E(Du_j) &= \int_{\Omega}\brangle{\nu_x, E(\cdot)}\id x + \int_{\overline{\Omega}} \brangle{\nu_x^{\infty},\abss{\cdot}}\id \lambda_{\nu}(x) \label{eq:ASYMmid1}\\
        &\geq \int_{\Omega} E(\bar{\nu}_x)\id x + \int_{\Omega} \abss{\bar{\nu}_x^{\infty}}\frac{\id \lambda_{\nu}}{\id \Le^n}\id x +\int_{\Omega} \abss{\bar{\nu}_x^{\infty}}\id \lambda^s_{\nu}(x) + \lambda_{\nu}(\partial \Omega)\notag\\
        &\geq \int_{\Omega} E(\bar{\nu}_x +\bar{\nu}_x^{\infty}\frac{\id \lambda_{\nu}}{\id \Le^n})\id x +\int_{\Omega} \abss{\bar{\nu}_x^{\infty}}\id \lambda^s_{\nu}(x)+ \lambda_{\nu}(\partial \Omega)\notag\\
        & = \int_{\Omega} E(\nabla u)\id x + \abss{D^s u}(\Omega) + \lambda_{\nu}(\partial \Omega),\notag
    \end{align} where the second line follows from Jensen's inequality and the third from Lemma \ref{lem:Eseparation}. On the other hand, the sequence $\{u_j\}$ converges to $u$ in the area-strict sense, which indicates 
    \begin{equation} \label{eq:ASYMmid2}
    \lim_{j\to \infty} \int_{\Omega} E(Du_j) = \int_{\Omega} E(Du) = \int_{\Omega} E(\nabla u)\id x + \abss{D^s u}(\Omega).
    \end{equation} 
    Combining \eqref{eq:ASYMmid1} and \eqref{eq:ASYMmid2}, we know that
    $\lambda_{\nu}\mres \partial \Omega = 0$ and all the inequalities in \eqref{eq:ASYMmid1}
    are equalities. By the
    strict convexity of $E(\cdot)$ and $\abss{\cdot}$ on $\SP^{n-1}$,
    the first inequality of \eqref{eq:ASYMmid1} --- which is now an equality --- implies
     \begin{align*}
        &\nu_x = \delta_{\bar{\nu}_x}\quad  \mbox{for } \Le^n\mbox{-a.e. }x\in \Omega,\\
        &\nu_x^{\infty} = \delta_{\bar{\nu}_x^{\infty}}\quad \mbox{for } \lambda_{\nu}\mbox{-a.e. }x\in \Omega.
    \end{align*} These give the first and third equality in \eqref{eq:ASYM3}. From Lemma \ref{lem:Eseparation}, we know that the second inequality in \eqref{eq:ASYMmid1} holds true as an equality if and only if
    \[\bar{\nu}_x^{\infty}\frac{\id \lambda_{\nu}}{\id \Le^n}(x) = 0 \quad \mbox{for }\Le^n\mbox{-a.e. }x\in \Omega. \]
    Notice that
    the third line of \eqref{eq:ASYM3}
    indicates $\bar{\nu}_x^{\infty} \in \SP^{n-1}$ for $\lambda_{\nu}$-a.e. $x\in \Omega$, which further implies $\frac{\id \lambda_{\nu}}{\id \Le^n}(x) = 0$ for $\Le^n$-a.e. $x\in \Omega$,
    so that $\lambda_{\nu} \perp \Le^n$.
    \par The above argument implies that an arbitrary Young measure generated by some subsequence of $\{Du_j\}$ satisfies \eqref{eq:ASYM3} and \eqref{eq:ASYM4}, and then it is easy to see that $\{Du_j\}$ itself generates $\nu$ as in the statement. The proof is now complete.
\end{proof}

	\begin{proposition}\label{prop:area strict and norm}
		Let $\Om\subset \R^n$ be a bounded open set with $\Le^n(\partial \Omega)=0$.
		Let $\{u_\lambda\}_{\lambda>0}\subset \dot{\BV}(\Om)$
		and let $u\in \dot{\BV}(\Om)$ such that $u_{\lambda}\to u$ as $\lambda\to \infty$ area-strictly in $\Om$.
		Then we have
		\[
		\limsup_{\lambda\to\infty} |D(u_\lambda-u)|(\Om)\le 2|D^s u|(\Om).
		\]
	\end{proposition}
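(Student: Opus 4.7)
The plan is to first decompose the total variation into contributions from the absolutely continuous and singular parts with respect to $\Le^n$. Since $(\nabla u_\lambda - \nabla u)\Le^n$ is $\Le^n$-absolutely continuous and $D^s u_\lambda - D^s u$ is $\Le^n$-singular, the two are mutually singular, and the triangle inequality for total variation yields
\begin{align*}
    |D(u_\lambda - u)|(\Om) &= \int_\Om |\nabla u_\lambda - \nabla u|\id x + |D^s u_\lambda - D^s u|(\Om)\\
    &\leq \int_\Om |\nabla u_\lambda - \nabla u|\id x + |D^s u_\lambda|(\Om) + |D^s u|(\Om).
\end{align*}
Thus it suffices to establish
\[
    \limsup_{\lambda\to\infty}\brac{\int_\Om |\nabla u_\lambda - \nabla u|\id x + |D^s u_\lambda|(\Om)}\leq |D^s u|(\Om).
\]

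Next I would bring in the generalised Young measure machinery. Area-strict convergence forces $\sup_\lambda |Du_\lambda|(\Om) < \infty$ via $\int_\Om E(Du_\lambda) \geq |Du_\lambda|(\Om)$, so any subsequence $\{\lambda_k\}$ admits, by Corollary~\ref{cor:YMft}, a further subsequence (not relabeled) with $Du_{\lambda_k} \overset{\Y}{\to}\nu$ for some $\nu \in \BVY(\Om;\R^n)$. Lemma~\ref{lem:areastrYM} identifies $\nu$ completely: $\nu_x = \delta_{\nabla u(x)}$ for $\Le^n$-a.e.\ $x$, the concentration measure satisfies $\lambda_\nu(\partial\Om)=0$ together with $\lambda_\nu = |D^s u|$ on $\Om$, and the direction measure $\nu_x^\infty$ is Dirac with barycentre of unit length at $\lambda_\nu$-a.e.\ point.

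The heart of the argument is to test the Young measure convergence against the integrand $f(x,z) := |z - v(x)|$, where, given $\eps > 0$, I choose $v \in C(\overline{\Om};\R^n)$ with $\|\nabla u - v\|_{L^1(\Om)} < \eps$ (by density). Then $f \in \mathbf{R}(\Om;\R^n)$ with $f^\infty(x,z) = |z|$, so Proposition~\ref{prop:YM Caratheodory} combined with the structure of $\nu$ gives
\[
    \int_\Om |\nabla u_{\lambda_k} - v|\id x + |D^s u_{\lambda_k}|(\Om) = \Brangle{\varepsilon_{Du_{\lambda_k}}, f} \longrightarrow \Brangle{\nu, f} = \int_\Om|\nabla u - v|\id x + |D^s u|(\Om).
\]
Because the limit is independent of the chosen subsequence, it holds along the full family $\lambda \to \infty$. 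The bound $\int|\nabla u_\lambda - \nabla u|\id x \leq \int|\nabla u_\lambda - v|\id x + \eps$, together with $\int|\nabla u - v|\id x < \eps$, then yields
\[
    \limsup_{\lambda\to\infty}\brac{\int_\Om|\nabla u_\lambda - \nabla u|\id x + |D^s u_\lambda|(\Om)} \leq 2\eps + |D^s u|(\Om),
\]
and sending $\eps \downarrow 0$ closes the reduction and hence the proposition.

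The main technical obstacle is that $\nabla u$ need only lie in $L^1(\Om;\R^n)$, so the naive choice $v = \nabla u$ is not admissible: the integrand $(x,z)\mapsto|z-\nabla u(x)|$ generally lacks a continuous recession function on $\overline{\Om}\times\R^n$ and uniform linear growth in $x$, so the hypotheses of $\mathbf{R}(\Om;\R^n)$ fail. Approximating $\nabla u$ in $L^1$ by continuous $v$ circumvents this at the price of the $\eps$ error, which vanishes in the limit.
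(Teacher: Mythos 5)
Your proof is correct and follows essentially the same route as the paper: both identify the Young measure generated by $\{Du_\lambda\}$ via Lemma~\ref{lem:areastrYM}, observe that $|Du_\lambda - \nabla u\,\Le^n|(\Om) = \int_\Om|\nabla u_\lambda - \nabla u|\id x + |D^s u_\lambda|(\Om)$ (your split is the same quantity written out), test against an integrand of the form $|z - (\text{approximation of }\nabla u)(x)|$, and pass to the limit with Proposition~\ref{prop:YM Caratheodory}. The only difference is in how the unboundedness of $\nabla u$ is tamed to put the integrand in $\mathbf{R}(\Om;\R^n)$: the paper truncates $\nabla u$ to $T_M(\nabla u)$ and sends $M\to\infty$, whereas you approximate $\nabla u$ in $L^1$ by a continuous $v$ and send $\eps\downarrow 0$; these are interchangeable devices.
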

\begin{proof}
From Lemma \ref{lem:areastrYM} we know that $Du_{\lambda} \overset{\mathbf{Y}}{\rightarrow} \nu$,
where $\nu\in Y^{\mathscr{M}}(\Omega;\R^n)$ is a $\BV$ Young measure and satisfies the following:
    \begin{align*}
        &\nu_x = \delta_{\bar{\nu}_x} \quad \mbox{and}\quad \nabla u(x) = \bar{\nu}_x\  \  \mbox{for } \Le^n\mbox{-a.e. }x\in \Omega,\\
        &\lambda_{\nu} \perp \Le^n,\quad  \lambda_{\nu}\mres \partial \Omega =0,\\
        &\nu_x^{\infty} = \delta_{\bar{\nu}_x^{\infty}} \ \  \mbox{for } \lambda_{\nu}\mbox{-a.e. }x\in \Omega, \quad \mbox{and} \quad 
        D^s u = \bar{\nu}_{\cdot}^{\infty}\lambda_{\nu} .
    \end{align*}
    Now consider the integrand $g(x,z):=\abss{z-\nabla u(x)}$. The integrand $g$ is Carath\'{e}odory
    by definition, but not necessarily in the class $\mathbf{R}(\Om;\R^n)$ since $\nabla u$ can be
    unbounded and the recession function of $g$ may not exist. Hence, replace $\nabla u$ by its truncation
    $T_M(\nabla u)$ defined by 
    \[
    T_M(\nabla u)(x):= \left\{ \begin{aligned}
    &\nabla u(x),& &\abss{\nabla u(x)}\leq M\\
    &M \frac{\nabla u(x)}{\abss{\nabla u(x)}},& &\abss{\nabla u(x)}>M.
    \end{aligned}\right.
    \]
    Then the corresponding integrand $g_M(x,z) := \abss{z-T_M(\nabla u)(x)}$ is Carath\'{e}odory and of linear growth, and $g^{\infty}_M(x,z)=\abss{z}$. By Proposition \ref{prop:YM Caratheodory}, we have
    \begin{align*}
       &\  \limsup_{\lambda\to \infty}\abss{Du_{\lambda} -Du}(\Omega) \leq  \limsup_{\lambda\to \infty}\abss{Du_{\lambda} -\nabla u \Le^n}(\Omega)+ \abss{D^s u}(\Omega)\\
       \leq&\ \limsup_{\lambda\to \infty}\abss{Du_{\lambda} -T_M(\nabla u)\Le^n}(\Omega)+\int_{\Omega}\abss{T_M(\nabla u)-\nabla u}\id x+ \abss{D^s u}(\Omega)\\
        = &\ \int_{\Omega} \brangle{\delta_{\bar{\nu}_x - T_M(\nabla u)(x)}, \, \abss{\cdot}}\id x + \int_{\Omega} \brangle{\delta_{\bar{\nu}_x^{\infty}},\, \abss{\cdot}}\id \lambda_{\nu}(x) +\int_{\Omega}\abss{T_M(\nabla u)-\nabla u}\id x+\abss{D^su}(\Omega)\\
        =&\ 2\int_{\Omega}\abss{T_M(\nabla u)-\nabla u}\id x+2\abss{D^s u}(\Omega).
    \end{align*} The last line converges to $2\abss{D^s u}$ as $M \to \infty$, which gives the desired result.
\end{proof}
\par The following lemma shows that area-strict convergence is retained on good subsets.
\begin{lemma}\label{lem:strict and area}
		Let $\Om\subset \R^n$ be open. Suppose $u\in \dot{\BV}(\Om)$ and $\{u_{\lambda}\}_{\lambda>0} \in \dot{\BV}(\Om)$, and that $u_{\lambda}\to u$ area-strictly
		in $\Om$.
        Suppose $W\subset \Om$ is open with
        $|Du|(\partial W\cap \Om)=0=\Le^n(\partial W \cap\Om)$.
        Then $u_{\lambda} \to u$ area-strictly in $W$ as well, and thus
        \begin{align}
        &\lim_{\lambda\to \infty} \int_W E(Du_{\lambda})
	=\int_{W} E(Du). \label{eq:strict conv consequence three}
    \end{align}
    If $W$ is moreover bounded, we also have
    \begin{align}
		&\lim_{\lambda\to \infty}Du_\lambda(W)=Du(W), \nonumber\\
		&\lim_{\lambda\to \infty}|Du_\lambda|(W)=|Du|(W).\label{eq:strict conv consequence two}
		\end{align}
	\end{lemma}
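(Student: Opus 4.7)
The plan is to decompose $\Omega$ into the three pieces $W$, $\Omega\setminus\overline W$, and the interface $I:=\partial W\cap\Omega$, and to exploit the hypothesis that $I$ carries no mass for either $\Le^n$ or $|Du|$. Indeed, since $\Le^n(I)=0=|Du|(I)$, we get $\int_I E(Du)=\int_I E(\nabla u)\id x+|D^s u|(I)=0$, hence
\[
\int_\Omega E(Du)=\int_W E(Du)+\int_{\Omega\setminus\overline W}E(Du).
\]

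Next, I would apply standard Reshetnyak-type lower semicontinuity of the convex linear-growth functional $v\mapsto\int_U E(Dv)$ on the open sets $U=W$ and $U=\Omega\setminus\overline W$ (using the $L^1_{\loc}$ restrictions of $u_\lambda\to u$) to get
\[
\liminf_{\lambda}\int_W E(Du_\lambda)\ge \int_W E(Du),\qquad \liminf_{\lambda}\int_{\Omega\setminus\overline W}E(Du_\lambda)\ge \int_{\Omega\setminus\overline W}E(Du).
\]
Since $\int_W E(Du_\lambda)+\int_{\Omega\setminus\overline W}E(Du_\lambda)\le\int_\Omega E(Du_\lambda)\to\int_\Omega E(Du)$, and the latter coincides with the sum of the two right-hand sides above, a three-term pinching forces both $\liminf$'s to be limits and realises equality. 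Combined with $u_\lambda\to u$ in $L^1_{\loc}(W)$, this gives area-strict convergence in $W$ and establishes \eqref{eq:strict conv consequence three}.

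For \eqref{eq:strict conv consequence two}, assume $W$ is bounded. I would rerun the same pinching with $|\cdot|$ replacing $E(\cdot)$: area-strict convergence on $\Omega$ is well known to imply strict, so $|Du_\lambda|(\Omega)\to|Du|(\Omega)$; the hypothesis $|Du|(I)=0$ splits this total as $|Du|(W)+|Du|(\Omega\setminus\overline W)$; and lower semicontinuity of the total variation on open sets completes the pinching to yield $|Du_\lambda|(W)\to|Du|(W)$. For the signed statement $Du_\lambda(W)\to Du(W)$, the measures $Du_\lambda\mres W$ are supported in the compact set $\overline W$ and have uniformly bounded total variation, so any subsequence has a weak$^*$ convergent sub-subsequence; the limit agrees with $Du\mres W$ on $W$ by $L^1_{\loc}$ convergence, and its total variation matches $|Du|(W)$ by the total-variation convergence just established, so no extra mass can sit on $\partial W$. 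Testing against any $\varphi\in C_c(\R^n)$ with $\varphi\equiv 1$ on $\overline W$ then yields $Du_\lambda(W)\to Du(W)$.

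The main obstacle is really just bookkeeping: both pinching arguments rely on the correct lower semicontinuity on open sets, which is standard Reshetnyak material for convex linear-growth integrands, and on the known implication that area-strict is stronger than strict convergence. The only mild subtlety is that $W$ may touch $\partial\Omega$, which is precisely why we work with $\partial W\cap\Omega$ rather than $\partial W$; since $Du$ assigns no mass outside $\Omega$, this causes no real issue.
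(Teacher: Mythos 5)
Your argument for \eqref{eq:strict conv consequence three} is essentially the one the paper gives: decompose $\Om$ into $W$, $\Om\setminus\overline W$, and the null interface $\partial W\cap\Om$, apply lower semicontinuity of the area functional on each open piece, and pinch using $\int_{\partial W\cap\Om}E(Du)=0$.

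For \eqref{eq:strict conv consequence two}, you take a genuinely different route. The paper returns to Lemma~\ref{lem:areastrYM}: since $u_\lambda\to u$ area-strictly in $W$ (just proved) and $W$ is bounded, the Young measure generated by $\{Du_\lambda\}$ on $W$ has the special structure described there, and one evaluates $\lim_\lambda\int_W g(Du_\lambda)$ for $g(z)=z$ and $g(z)=\abss{z}$ directly from the Young-measure representation. You instead invoke the classical fact that area-strict convergence implies strict convergence, re-run the same three-term pinching with $\abss{\cdot}$ in place of $E$ to obtain $\abss{Du_\lambda}(W)\to\abss{Du}(W)$, and then deduce $Du_\lambda(W)\to Du(W)$ by weak$^*$ compactness of $\{Du_\lambda\mres W\}$ in $\M(\overline W;\R^n)$: any weak$^*$ limit coincides with $Du\mres W$ on $W$ by distributional convergence, and carries no mass on $\partial W$ because its total variation is pinned to $\abss{Du}(W)$, whence the test against a cutoff $\varphi\equiv 1$ on $\overline W$ gives the conclusion. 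Both are correct; yours is more elementary and avoids re-entering the Young-measure machinery, but it does pull in ``area-strict $\Rightarrow$ strict'' as an external fact, which is itself essentially (the autonomous case of) Reshetnyak's continuity theorem, i.e.\ the same content as Lemma~\ref{lem:areastrYM}. One small economy you missed: having already established area-strict convergence in $W$, and $W$ being bounded, you could have applied ``area-strict $\Rightarrow$ strict'' on $W$ directly, obtaining $\abss{Du_\lambda}(W)\to\abss{Du}(W)$ without the second pinching. Also note that ``area-strict on $\Om$ $\Rightarrow$ strict on $\Om$'' as you state it is cleanest when $\Om$ is bounded; on $\Om=\R^n$ the definition with $E$ (rather than $E-1$) requires some care, though this subtlety is present in the paper's own setup and is orthogonal to the difference between the two proofs.
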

\begin{proof}
We have
\[
\lim_{\lambda\to \infty} \int_\Om E(Du_{\lambda})=\int_{\Om} E(Du),
\]
and by the lower semicontinuity of the area functional with respect to $L^1_{\loc}$-convergence,
see \cite{ADM, FM},
we have
\[
\liminf_{\lambda\to \infty} \int_W E(Du_{\lambda})\ge \int_{W} E(Du)
\quad\textrm{and}\quad
\liminf_{\lambda\to \infty} \int_{\Om\setminus \overline{W}} E(Du_{\lambda})
\ge \int_{\Om\setminus \overline{W}} E(Du).
\]
Since $\int_{\partial W\cap \Om}E(Du)=0$, these are necessarily equalities.
In particular, \eqref{eq:strict conv consequence three} follows.
If $W$ is bounded, the proof is completed by noting that 
    \[ 
    \lim_{\lambda\to\infty} \int_{W}  g(D u_{\lambda}) 
    = \int_{W} 
    \brangle{\nu_x, g(\cdot)}\id x 
    + \int_{W} \brangle{\nu_x^{\infty},g^{\infty}(\cdot)}\id \lambda_{\nu} 
    = \int_W g(Du)  
    \] 
    by Lemma \ref{lem:areastrYM},
    where  $\nu = ((\nu_x)_{x\in W}, \lambda_{\nu}, (\nu_x^{\infty})_{x\in \overline{W}})$ is the Young measure generated by $\{u_{\lambda}\}$ on $W$, and $g(z)$ is taken to be $z, \abss{z}$,
    respectively.
\end{proof}

\subsection{Lower bound in one dimension}

We will first prove the lower bound for the $\Gamma$-convergence in one dimension.
Recall that we always consider $0<\gamma<\infty$.
When considering families $\{u_\lambda\}_{\lambda>0}$,
we will rely on certain estimates known for a fixed function $u$.
In particular, the following result will be needed; it was originally proved in \cite{Pic}.

\begin{proposition}[{\cite[Proposition 3.18]{L-sharp}}]\label{thm:1d case variation abs cont pointwise}
	Let $\Om\subset \R$ be open and let $u\in \dot{\BV}(\Om)$.
	Then
	\[
	\liminf_{\lambda \to \infty}F_{\gamma,\lambda}\left(u,\Om\right)
	\ge \frac{2}{\gamma}|D^a u|(\Om).
	\]
\end{proposition}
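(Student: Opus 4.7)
The plan is to establish a pointwise almost-everywhere lower bound on an $x$-integrand and then invoke Fatou's lemma. Using Fubini's theorem and the substitution $y = x + h$, I would first rewrite
\[
F_{\gamma,\lambda}(u,\Om) = \int_\Om f_\lambda(x) \id x,
\qquad
f_\lambda(x) := \lambda \int_{\{h\col x+h\in \Om\}} \indi_{\{|u(x+h)-u(x)|>\lambda|h|^{1+\gamma}\}} |h|^{\gamma-1} \id h,
\]
so that it suffices to prove $\liminf_{\lambda\to\infty} f_\lambda(x) \ge \frac{2}{\gamma}|\nabla u(x)|$ for $\Le^1$-a.e.\ $x \in \Om$, after which Fatou's lemma applied to $\int_\Om f_\lambda \id x$ yields the result.

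The decisive one-dimensional input I would use is that every $u\in \dot{\BV}(\Om)$ with $\Om\subset\R$ admits a representative that is classically differentiable at $\Le^1$-a.e.\ $x\in \Om$, with classical derivative equal to $\nabla u(x)$. This follows from the Jordan decomposition $u = u_+ - u_-$ into nondecreasing parts together with Lebesgue's theorem on the differentiability of monotone functions, the identification of the classical derivative with the Radon--Nikodym density of $D^a u$ with respect to $\Le^1$ being standard.

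With this in hand, I would fix a point $x$ of classical differentiability and set $a := \nabla u(x)$; the estimate is trivial when $a=0$, so assume $|a|>0$. Given $\eps\in(0,|a|)$, classical differentiability supplies $\delta_0>0$ with $|u(x+h)-u(x)| > (|a|-\eps)|h|$ for every $0<|h|<\delta_0$. Setting $r_\lambda := ((|a|-\eps)/\lambda)^{1/\gamma}$, a direct calculation gives $(|a|-\eps)|h| > \lambda|h|^{1+\gamma}$ whenever $0<|h|<r_\lambda$, so every such $h$ belongs to the defining set of $f_\lambda(x)$. For all sufficiently large $\lambda$ one has $r_\lambda<\delta_0$ and $(x-r_\lambda,x+r_\lambda)\subset \Om$, and hence
\[
f_\lambda(x) \ge \lambda \int_{-r_\lambda}^{r_\lambda} |h|^{\gamma-1} \id h = \frac{2\lambda r_\lambda^\gamma}{\gamma} = \frac{2(|a|-\eps)}{\gamma}.
\]
Sending $\lambda\to\infty$ and then $\eps\to 0^+$ yields the pointwise bound, and Fatou's lemma then delivers $\liminf_\lambda F_{\gamma,\lambda}(u,\Om) \ge \frac{2}{\gamma}\int_\Om |\nabla u|\id x = \frac{2}{\gamma}|D^au|(\Om)$.

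The main obstacle, or really the essential point, is the appeal to classical a.e.\ differentiability of BV functions on the line; this is a feature special to one dimension. A naive extension of this argument to higher dimensions would break down, especially when $\gamma<1$, because the weight $|h|^{\gamma-1}$ is not integrable near the origin, so the mere density-$1$ property associated to approximate differentiability would not suffice to absorb the error. In the present one-dimensional setting, however, the pointwise route works cleanly.
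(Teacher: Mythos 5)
The paper does not prove this proposition; it cites it as \cite[Proposition 3.18]{L-sharp} (and notes that the result is originally due to Picenni \cite{Pic}), so there is no in-paper argument to compare against.

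Your proof is correct and self-contained. The Tonelli/change-of-variables rewrite as $\int_\Om f_\lambda\id x$ is valid because the integrand is nonnegative and $F_{\gamma,\lambda}$ is insensitive to changing $u$ on a null set. The key one-dimensional input you invoke --- that a BV function on the line admits a representative which is classically differentiable $\Le^1$-a.e.\ with derivative equal to the Radon--Nikodym density of $D^a u$ --- is exactly \cite[Theorem 3.28]{AFP}, and is the right structural fact. The choice $r_\lambda=\bigl((|\nabla u(x)|-\eps)/\lambda\bigr)^{1/\gamma}$ is precisely calibrated so that $(|\nabla u(x)|-\eps)|h|>\lambda|h|^{1+\gamma}$ for $0<|h|<r_\lambda$, and the resulting computation $\lambda\int_{-r_\lambda}^{r_\lambda}|h|^{\gamma-1}\id h=2(|\nabla u(x)|-\eps)/\gamma$ is exact; Fatou then closes the argument. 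This is the canonical route to the one-dimensional pointwise lower bound.

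One minor correction to your closing remark: for $\gamma>0$ the weight $|h|^{\gamma-n}$ \emph{is} locally integrable near the origin in every dimension, since $\int_{|h|<1}|h|^{\gamma-n}\id h\asymp\int_0^1 r^{\gamma-1}\id r<\infty$. So integrability near $0$ is not the higher-dimensional obstruction; the genuine obstruction is the one you name second, namely that in dimension $n\ge 2$ a BV (or even Sobolev) function is only \emph{approximately} differentiable a.e., and a density-$1$ property alone does not rule out that the exceptional set of $h$'s makes a nontrivial contribution after multiplying by $\lambda$.
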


By \cite[Theorem 1.4]{BSSY} we know that for an open interval $\Om\subset \R$
and $u\in L^1_{\loc}(\Om)$, we have
\begin{equation}\label{eq:sup bound}
	F_{\gamma,\lambda}\left(u,\Om\right)
	\le C'\Var(u,\Om)
\end{equation}
for all $\lambda>0$ and for some constant $C'$ depending only on $n$ and $\gamma$.
In fact, this is proved in \cite{BSSY} only in the case $\Om=\R^n$, but when $n=1$
it clearly holds for any open interval $\Om$ as well, since we find an extension $v$ of $u$
with $|Dv|(\R)=|Du|(\Om)$, and then
\begin{align*}
    F_{\gamma,\lambda}(u,\Om)
    \le F_{\gamma,\lambda}(v,\R)
	\le C' \Var(v,\R)
    =C' \Var(u,\Om).
\end{align*}

Now we prove the following generalization of Proposition \ref{thm:1d case variation abs cont pointwise}.

\begin{proposition}\label{prop:1d case variation abs cont}
	Let $\Om\subset \R$ be open, let 
	$u\in \dot{\BV}(\Om)$,
	and let $u_{\lambda}\to u$ area-strictly in $\Om$ as $\lambda\to\infty$.
	Then
	\[
	\liminf_{\lambda \to \infty}F_{\gamma,\lambda}\left(u_\lambda,\Om\right)
	\ge \frac{2}{\gamma}|D^a u|(\Om).
	\]
\end{proposition}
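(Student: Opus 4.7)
The plan is to reduce to bounded open intervals $I_k\subset\Omega$ on which $|D^s u|$ vanishes, so that on each $I_k$ Proposition \ref{prop:area strict and norm} upgrades area-strict convergence to BV-norm convergence. Once this is in place, a triangle-inequality decomposition with a free parameter $\theta\in(0,1)$ transfers Proposition \ref{thm:1d case variation abs cont pointwise} (the lower bound for the fixed function $u$) to the varying sequence $u_\lambda$. For the construction: without loss of generality $\Omega$ is bounded (otherwise intersect with $(-R,R)$ for large $R$ avoiding atoms of $|Du|$ and invoke Lemma \ref{lem:strict and area}). Fixing $\eps>0$, I would pick a Borel set $N\subset\Omega$ with $\Le^1(N)=0$, $|D^s u|(\Omega\setminus N)=0$ and containing every atom of $|Du|$, then use absolute continuity of $|D^a u|$ with respect to $\Le^1$ and outer regularity to choose an open $V\supset N$ with $|D^a u|(V)<\eps$, written as a disjoint union $V=\bigcup_m J_m$ of open intervals. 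Since every atom of $|Du|$ lies in $V$ and $|D^c u|$ is non-atomic, $|Du|(\partial V\cap\Omega)=0$. Setting $W:=\Omega\setminus\overline V=\bigcup_k I_k$ yields a disjoint union of bounded open intervals with $|D^s u|(W)=0$, $|D^a u|(W)>|D^a u|(\Omega)-\eps$ and $|Du|(\partial I_k\cap\Omega)=\Le^1(\partial I_k\cap\Omega)=0$ for every $k$.

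By Lemma \ref{lem:strict and area} applied on each $I_k$, $u_\lambda\to u$ area-strictly in $I_k$, and since $|D^s u|(I_k)=0$, Proposition \ref{prop:area strict and norm} gives $|D(u_\lambda-u)|(I_k)\to 0$. For any $\theta\in(0,1)$ and $\mu>0$, the pointwise triangle inequality $|u(x)-u(y)|\le|u_\lambda(x)-u_\lambda(y)|+|(u_\lambda-u)(x)-(u_\lambda-u)(y)|$ gives $E_{\gamma,\mu}(u,I_k)\subset E_{\gamma,\theta\mu}(u_\lambda,I_k)\cup E_{\gamma,(1-\theta)\mu}(u_\lambda-u,I_k)$, hence
\[
F_{\gamma,\mu}(u,I_k)\le\tfrac{1}{\theta}F_{\gamma,\theta\mu}(u_\lambda,I_k)+\tfrac{1}{1-\theta}F_{\gamma,(1-\theta)\mu}(u_\lambda-u,I_k).
\]
Choosing $\mu=\lambda/\theta$ and rearranging yields
\[
F_{\gamma,\lambda}(u_\lambda,I_k)\ge\theta\, F_{\gamma,\lambda/\theta}(u,I_k)-\tfrac{\theta}{1-\theta}F_{\gamma,(1-\theta)\lambda/\theta}(u_\lambda-u,I_k).
\]
By the sup bound \eqref{eq:sup bound} on the open interval $I_k$, the error term is at most $\tfrac{\theta C'}{1-\theta}|D(u_\lambda-u)|(I_k)\to 0$, while Proposition \ref{thm:1d case variation abs cont pointwise} applied to the fixed $u$ on $I_k$ gives $\liminf_{\lambda\to\infty} F_{\gamma,\lambda/\theta}(u,I_k)\ge\frac{2}{\gamma}|D^a u|(I_k)$. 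Therefore $\liminf_{\lambda\to\infty} F_{\gamma,\lambda}(u_\lambda,I_k)\ge\frac{2\theta}{\gamma}|D^a u|(I_k)$, and $\theta\to 1^-$ removes the factor $\theta$.

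Since the products $I_k\times I_k$ are pairwise disjoint subsets of $\Omega\times\Omega$, $F_{\gamma,\lambda}(u_\lambda,\Omega)\ge\sum_k F_{\gamma,\lambda}(u_\lambda,I_k)$, and Fatou for the counting measure then yields
\[
\liminf_{\lambda\to\infty} F_{\gamma,\lambda}(u_\lambda,\Omega)\ge\sum_k\liminf_{\lambda\to\infty} F_{\gamma,\lambda}(u_\lambda,I_k)\ge\tfrac{2}{\gamma}|D^a u|(W)\ge\tfrac{2}{\gamma}\brac{|D^a u|(\Omega)-\eps};
\]
letting $\eps\to 0$ concludes. The main obstacle is the nonlinearity of $F_{\gamma,\lambda}$ in $\lambda$: a naive split $E_{\gamma,\lambda}(u)\subset E_{\gamma,\lambda/2}(u_\lambda)\cup E_{\gamma,\lambda/2}(u_\lambda-u)$ loses a factor of $2$ in the constant, so I have to use an arbitrary $\theta\in(0,1)$ and reparametrize $\mu=\lambda/\theta$ before letting $\theta\to 1^-$ to recover the sharp constant. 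This manoeuvre is meaningful only because the error term $F_{\gamma,(1-\theta)\lambda/\theta}(u_\lambda-u,I_k)$ vanishes, which itself hinges on confining attention to intervals $I_k$ on which $|D^s u|$ is absent so that Proposition \ref{prop:area strict and norm} produces convergence in BV norm.
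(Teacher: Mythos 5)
Your overall strategy is the same as the paper's: work interval by interval, compare $u_\lambda$ to the fixed function $u$ via the triangle-inequality split, control the cross term by the uniform bound \eqref{eq:sup bound} and Proposition \ref{prop:area strict and norm}, invoke the fixed-function lower bound of Proposition \ref{thm:1d case variation abs cont pointwise}, and then sum over components. Your use of a free parameter $\theta\in(0,1)$ with $\mu=\lambda/\theta$ is just a reparametrization of the paper's $(1+\eps)$-scaling and is fine.

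The gap is in the construction of $W$. You claim one can find an open $W\subset\Omega$ with \emph{both} $|D^s u|(W)=0$ and $|D^a u|(W)>|D^a u|(\Omega)-\eps$. This is false in general: if, say, $u(x)=x+\sum_{q\in\Q\cap\Omega,\, q<x}a_q$ with $a_q>0$ and $\sum a_q<\infty$, then $|D^j u|$ charges every nonempty open subinterval of $\Omega$, so the only open set with $|D^s u|(W)=0$ is $W=\emptyset$, yet $|D^a u|(\Omega)>0$. Your chain of deductions also hides a second unjustified step: from $|D^a u|(V)<\eps$ and ``$|D^c u|$ is non-atomic'' you conclude $|Du|(\partial V\cap\Omega)=0$ and hence $|D^a u|(W)>|D^a u|(\Omega)-\eps$, but $\partial V$ need not be Lebesgue-null when $V$ is a countable union of intervals (think of the complement of a fat Cantor set), so $|D^a u|(\partial V\cap\Omega)$ can be positive and $|D^a u|(\overline{V}\cap\Omega)$ need not be $<\eps$.

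The remedy is exactly what the paper does: drop the demand $|D^s u|(W)=0$ and instead take $W$ to be a \emph{finite} union of open intervals with endpoints that are not atoms of $|Du|$ (so $\partial W$ is a finite $|Du|$- and $\Le^1$-null set), chosen so that $|D^s u|(W)<\eps^2$ and $|D^a u|(\Omega\setminus W)<\eps$. Then by Proposition \ref{prop:area strict and norm} the error term in your split is bounded by $\tfrac{2\theta C'}{1-\theta}|D^s u|(W)\le\tfrac{2\theta C'}{1-\theta}\eps^2$, which no longer vanishes for fixed $\theta$ but is harmless once you couple $\theta$ with $\eps$ (e.g.\ $\theta=1/(1+\eps)$). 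Everything else in your argument — Lemma \ref{lem:strict and area} on the components, the $\theta$-split, the sup bound, Fatou over the countably many intervals — then goes through unchanged and recovers the paper's proof.
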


\begin{proof}
	Fix $\eps>0$.
	First consider a bounded open interval $W\subset \Om$ with $|Du|(\partial W\cap \Om)=0$.
    Then by Lemma \ref{lem:strict and area},
    we also have $u_{\lambda}\to u$ area-strictly in $W$ as $\lambda\to\infty$.
	Note that, for any $\varepsilon>0$,
	\begin{align*}
		&\left\{(x,y)\in W\times W\colon
		\frac{|u(x)-u(y)|}{|x-y|^{1+\gamma}}>(1+\eps)\lambda\right\}\\
		&\quad\subset  \left\{(x,y)\in W\times W\colon
		\frac{|u_\lambda(x)-u_\lambda(y)|}{|x-y|^{1+\gamma}}>\lambda\right\}\\
		&\qquad \cup  \left\{(x,y)\in W\times W\colon
		\frac{|(u-u_\lambda)(x)-(u-u_\lambda)(y)|}{|x-y|^{1+\gamma}}
		>\eps\lambda\right\}.
	\end{align*}
	Thus, we have
	\[
	\frac{1}{1+\eps}F_{\gamma,(1+\eps)\lambda}\left(u,W\right)
	\le F_{\gamma,\lambda}\left(u_\lambda,W\right)
	+\eps^{-1}F_{\gamma,\eps\lambda}\left(u-u_\lambda,W\right).
	\]
	Note that $F_{\gamma,\lambda}(v,W)<\infty$ whenever $v\in \dot{\BV}(\Om)$
	by \eqref{eq:sup bound}, and so we can subtract
	\begin{equation}\label{eq:interval case}
	\begin{split}
		&\liminf_{\lambda \to \infty}F_{\gamma,\lambda}\left(u_\lambda,W\right)\\
		&\qquad \ge \frac{1}{1+\eps}\liminf_{\lambda \to \infty}F_{\gamma,(1+\eps)\lambda}\left(u,W\right)
		-\eps^{-1}\limsup_{\lambda \to \infty}F_{\gamma,\eps\lambda}\left(u-u_\lambda,W\right)\\
		&\qquad\ge \frac{1}{1+\eps}\liminf_{\lambda \to \infty}F_{\gamma,(1+\eps)\lambda}\left(u,W\right)
		-C'\eps^{-1}\limsup_{\lambda \to \infty}|D(u-u_\lambda)|(W)\quad\textrm{by }
		\eqref{eq:sup bound}\\
		&\qquad\ge \frac{1}{1+\eps}\liminf_{\lambda \to \infty}F_{\gamma,(1+\eps)\lambda}\left(u,W\right)
		-2C'\eps^{-1} |D^s u|(W)\quad\textrm{by Proposition }
		\ref{prop:area strict and norm}\\
		&\qquad\ge \frac{1}{1+\eps} \frac{2}{\gamma}|D^au|(W) -2C'\eps^{-1} |D^s u|(W)
		\quad\textrm{by Proposition }\ref{thm:1d case variation abs cont pointwise}.
	\end{split}
	\end{equation}
	Then consider an arbitrary bounded open set $W\subset \Om$ with $|Du|(\partial W\cap \Om)=0$.
	We can write it as a union of pairwise disjoint open intervals $W=\bigcup_{j=1}^{\infty}W_j$
    with $|Du|(\partial W_j\cap \Om)=0$ for each $j\in\N$.
	Fix $N\in\N$. Now
	\begin{align*}
		\liminf_{\lambda \to \infty}F_{\gamma,\lambda}\left(u_\lambda,W\right)
		&\ge \liminf_{\lambda \to \infty}F_{\gamma,\lambda}\left(u_\lambda,\bigcup_{j=1}^N W_j\right)\\
		&\ge \sum_{j=1}^N\liminf_{\lambda \to \infty}F_{\gamma,\lambda}\left(u_\lambda, W_j\right)\\
		&\ge \sum_{j=1}^N\left[
		 \frac{1}{1+\eps} \frac{2}{\gamma}|D^au|(W_j) -2C'\eps^{-1} |D^s u|(W_j)\right]\\
		&\ge \frac{1}{1+\eps} \frac{2}{\gamma}|D^au|
        \left(\bigcup_{j=1}^N W_j\right)
		 -2C'\eps^{-1} |D^s u|(W).
	\end{align*}
	Letting $N\to \infty$, we get
	\[
	\liminf_{\lambda \to \infty}F_{\gamma,\lambda}(u_\lambda,W)
	\ge \frac{1}{1+\eps} \frac{2}{\gamma}|D^au|(W)
	-2C'\eps^{-1} |D^s u|(W).
	\]
	We can choose the bounded
    open set $W\subset \Om$ such that $|Du|(\partial W\cap \Om)=0$,
    $|D^s u|(W)<\eps^2$, and $|D^a u|(\Om\setminus W)<\eps$.
	It follows that
	\[
        \liminf_{\lambda \to \infty}F_{\gamma,\lambda}(u_\lambda,\Om)
		\ge \liminf_{\lambda \to \infty}F_{\gamma,\lambda}(u_\lambda,W)
		\ge \frac{1}{1+\eps} \frac{2}{\gamma}\left(|D^au|(\Om)-\eps\right)-2C'\eps.
	\]
	Letting $\eps\to 0$, we get the result.
\end{proof}

We also need the following key estimate, which is essentially the main result of
\cite{L-sharp}.
In \cite[Corollary 3.9]{L-sharp}, this was formulated in terms of a limit
$\liminf_{\lambda\to\infty}$, but the following version follows immediately from the proof there.

	\begin{proposition}\label{cor:1d case preli}
	Let $-\infty<b_1\le b_2<\infty$ and $-\infty<L_1< L_2<\infty$, and let 
	$u$ be increasing with $u(x)=b_1$ for $x< L_1$ and $u(x)=b_2$ for $x> L_2$.
	Let $\gamma>0$.
	Then for every $\delta>0$ and $\lambda>(b_2-b_1)/\delta^{1+\gamma}$, we have
	\[
	\lambda \nu_\gamma(\left\{(x,y)\in (L_1-\delta,L_2+\delta)^2\colon
	|u(x)-u(y)|>\lambda|x-y|^{1+\gamma}\right\})
	\ge \frac{2(b_2-b_1)}{\gamma+1}.
	\]
\end{proposition}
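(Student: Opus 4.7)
The plan is to derive Proposition~\ref{cor:1d case preli} as a direct consequence of the sharp lower bound \eqref{eq:sharp lower bound} established in \cite{L-sharp}, which is already recalled in the introduction. Since $u$ is monotone, constant outside $[L_1, L_2]$, and takes values in $[b_1, b_2]$, it lies in $\dot{\BV}((L_1-\delta, L_2+\delta))$ with $|Du|((L_1-\delta, L_2+\delta)) = b_2 - b_1$; moreover, all of $Du$ is a positive Radon measure supported on $[L_1, L_2] \subset (L_1-\delta, L_2+\delta)$.

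Applying \eqref{eq:sharp lower bound} in dimension $n=1$ (so $C_1 = 2$) with $\Omega := (L_1-\delta, L_2+\delta)$ yields
\[
\liminf_{\lambda\to\infty}\lambda\nu_\gamma(E_{\gamma,\lambda}(u,\Omega)) \ge \frac{2}{\gamma}|D^a u|(\Omega) + \frac{2}{\gamma+1}|D^c u|(\Omega) + \frac{2}{\gamma+1}|D^j u|(\Omega).
\]
Since $\gamma>0$, we have $1/\gamma \ge 1/(\gamma+1)$, and the Lebesgue decomposition $|D^a u|(\Omega)+|D^c u|(\Omega)+|D^j u|(\Omega) = |Du|(\Omega) = b_2-b_1$ then gives
\[
\frac{2|D^a u|(\Omega)}{\gamma} + \frac{2(|D^c u|(\Omega)+|D^j u|(\Omega))}{\gamma+1} \ge \frac{2}{\gamma+1}\bigl(|D^a u|(\Omega)+|D^c u|(\Omega)+|D^j u|(\Omega)\bigr) = \frac{2(b_2-b_1)}{\gamma+1}.
\]
Combining the two displays yields the stated inequality.

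The only substantive obstacle in this approach is the invocation of \eqref{eq:sharp lower bound}, which is itself the main result of \cite{L-sharp}; once it is taken as given, the proposition is a purely algebraic consequence obtained by simply discarding the extra gain in the absolutely continuous term. An alternative, more self-contained route---closer to what \cite{L-sharp} does as a preliminary step---would be to first verify the pure-jump case by direct computation (yielding $\lim\lambda\nu_\gamma = 2(b_2-b_1)/(\gamma+1)$ exactly via the substitution $h = y-x$, since the pairs contributing to $E_{\gamma,\lambda}$ are exactly those straddling the jump with $|y-x|<r_\lambda := ((b_2-b_1)/\lambda)^{1/(\gamma+1)}$), and then reduce the general monotone case to the pure-jump case by a level-crossing argument involving the generalised inverse $X(t) := \inf\{x : u(x)>t\}$. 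The hard part of this more direct route would be the scale-matching between the level spacing $\eta$ and the critical radius $r_\lambda(\eta) = (2\eta/\lambda)^{1/(\gamma+1)}$: one must choose $\eta = \eta(\lambda) \to 0$ so that the ``crossing width'' $X(t+\eta)-X(t-\eta)$ is dominated by $r_\lambda(\eta)$ for a sufficiently large set of levels $t$, while simultaneously ensuring that summing the per-level contributions over $t\in(b_1+\eta,b_2-\eta)$ (modulo an appropriate multiplicity correction from the layer-cake identity $u(y)-u(x) = \int_{b_1}^{b_2}\mathbf{1}_{u(x)<t<u(y)}\,dt$) recovers the full factor $b_2-b_1$.
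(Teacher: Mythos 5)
The paper does not prove Proposition~\ref{cor:1d case preli}: it cites it directly from \cite[Corollary~3.9]{L-sharp}, even noting that it is ``essentially the main result of \cite{L-sharp}.'' Your primary route --- deriving the proposition from \eqref{eq:sharp lower bound} --- is therefore circular. The bound \eqref{eq:sharp lower bound} \emph{is} the final main theorem of \cite{L-sharp}, and Corollary~3.9 is one of the two ingredients that feed into it (the other being \cite[Proposition~3.18]{L-sharp}, recorded here as Proposition~\ref{thm:1d case variation abs cont pointwise}, which handles the absolutely continuous part and is credited to \cite{Pic}). The algebra you perform is correct: with $n=1$, $C_1=2$, and $1/\gamma \ge 1/(\gamma+1)$, the right-hand side of \eqref{eq:sharp lower bound} dominates $\tfrac{2}{\gamma+1}|Du|(\Om)=\tfrac{2(b_2-b_1)}{\gamma+1}$. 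But this only shows the proposition is a weak consequence of the theorem it is supposed to help establish; it is not an admissible proof.

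Your alternative sketch is the right direction and much closer to what a genuine proof must look like: the pure-jump computation (substitute $h=y-x$, identify the critical radius $r_\lambda=((b_2-b_1)/\lambda)^{1/(1+\gamma)}$) does give exactly $2(b_2-b_1)/(\gamma+1)$ for a single jump, and reducing a general increasing $u$ to this via the generalised inverse and a layer-cake/level-crossing argument is plausible. However, you stop precisely where the real work begins: one must choose the level spacing $\eta=\eta(\lambda)\to 0$ so that the crossing widths $X(t+\eta)-X(t-\eta)$ are dominated by $r_\lambda(\eta)=(2\eta/\lambda)^{1/(\gamma+1)}$ on a set of levels of almost full measure, while the per-level contributions sum to the full $b_2-b_1$ without over- or under-counting. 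None of that is carried out, so the second route, as written, is a plan rather than a proof.
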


\begin{proposition}\label{prop:1d case variation singular abs cont}
	Let $\Om\subset \R$ be open and let 
	$u\in \dot{\BV}(\Om)$,
	and let $u_{\lambda}\to u$ as $\lambda\to\infty$ area-strictly in $\Om$.
	Then
	\[
	\liminf_{\lambda \to \infty}F_{\gamma,\lambda}\left(u_\lambda,\Om\right)
	\ge \frac{2}{\gamma+1}|D u|(\Om).
	\]
\end{proposition}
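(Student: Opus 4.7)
The plan mirrors Proposition \ref{prop:1d case variation abs cont}: use the set-inclusion $\eps$-shift to transfer the estimate from the fixed limit $u$ to the sequence $u_\lambda$, and control the error via \eqref{eq:sup bound} and Proposition \ref{prop:area strict and norm}. The pointwise input needed on the fixed $u$ is $\liminf_{\lambda\to\infty}F_{\gamma,\lambda}(u,W)\ge\frac{2}{\gamma+1}|Du|(W)$ for bounded open intervals $W$; this follows from \eqref{eq:sharp lower bound} (in dimension $n=1$, using $\frac{2}{\gamma}\ge\frac{2}{\gamma+1}$), or alternatively from Proposition \ref{cor:1d case preli} via a decomposition of $u$ into monotone step-like pieces.

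Concretely, fix $\eps>0$ and a bounded open interval $W\subset\Om$ with $|Du|(\partial W\cap\Om)=0$. By Lemma \ref{lem:strict and area}, $u_\lambda\to u$ area-strictly in $W$. The same set inclusion used in Proposition \ref{prop:1d case variation abs cont}, combined with the pointwise bound above, \eqref{eq:sup bound}, and Proposition \ref{prop:area strict and norm}, yields
\[
\liminf_{\lambda\to\infty}F_{\gamma,\lambda}(u_\lambda,W)\ge\frac{1}{1+\eps}\frac{2}{\gamma+1}|Du|(W)-2C'\eps^{-1}|D^s u|(W).
\]
The extension from intervals to general bounded open $W$ via a disjoint decomposition into countably many open intervals is identical to the corresponding step in Proposition \ref{prop:1d case variation abs cont}.

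The main new difficulty is the exhaustion. Unlike Proposition \ref{prop:1d case variation abs cont}, one cannot simultaneously make $|D^s u|(W)$ small (to suppress the $\eps^{-1}$ error) and keep $|Du|(W)$ close to $|Du|(\Om)$ (to capture the target), whenever $|D^s u|(\Om)>0$. We therefore prove the absolutely continuous and singular contributions separately. For $\frac{2}{\gamma+1}|D^a u|(\Om)$, choose $W$ with $|D^s u|(W)<\eps^2$ and $|D^a u|(\Om\setminus W)<\eps$ and let $\eps\to 0$, exactly as in Proposition \ref{prop:1d case variation abs cont}. For $\frac{2}{\gamma+1}|D^s u|(\Om)$, localize to small disjoint open neighborhoods $V_k$ of the pieces of the support of $|D^s u|$: on each $V_k$, area-strict convergence forces $|Du_\lambda|(V_k)$ to converge to the net change of $u_\lambda$ on $V_k$, so $u_\lambda$ is asymptotically monotone on $V_k$, and a uniform variant of Proposition \ref{cor:1d case preli} applied to $u_\lambda$ gives $\liminf_{\lambda\to\infty}F_{\gamma,\lambda}(u_\lambda,V_k)\ge\frac{2}{\gamma+1}|D^s u|(V_k)$. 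Combining via the disjoint-pair additivity $F_{\gamma,\lambda}(u_\lambda,A_1\cup A_2)\ge F_{\gamma,\lambda}(u_\lambda,A_1)+F_{\gamma,\lambda}(u_\lambda,A_2)$ for disjoint Borel $A_1,A_2$ yields the full bound.

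\textbf{Main obstacle.} The singular contribution must be handled by an argument fundamentally different from the $\eps$-shift, because the shift error $\eps^{-1}|D^s u|$ is largest precisely where the singular mass concentrates. The asymptotic monotonicity of $u_\lambda$ on small neighborhoods of the singular support, coupled with a uniform transfer of Proposition \ref{cor:1d case preli} from the fixed limit $u$ to the varying sequence $u_\lambda$, is the main technical hurdle.
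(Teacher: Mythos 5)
Your high-level structure genuinely differs from the paper's, and a key step is left as a hand-wave, precisely at the point you yourself flag as the ``main technical hurdle.'' The paper does not split the bound into absolutely continuous and singular contributions; instead it treats all of $|Du|$ at once. Using a Vitali covering, it finds finitely many pairwise disjoint intervals $[c_j,d_j]$ on which the ratio $|Du([c_j,d_j])|/|Du|([c_j,d_j])$ is at least $1-\eps^2$ and which carry almost all of $|Du|$. This is crucial because Proposition~\ref{cor:1d case preli} requires a genuinely \emph{increasing} function with flat tails, not a ``nearly monotone'' one. To bridge this, the paper constructs an explicit auxiliary sequence: on each $[c_j,d_j]$ it keeps only the dominant part of $Du_\lambda$, setting $\mu_\lambda := (Du_\lambda)_+$ or $-(Du_\lambda)_-$ according to the sign of $Du_\lambda([c_j,d_j])$, sets $\mu_\lambda = 0$ elsewhere, and then defines $v_\lambda$ as the primitive of $\mu_\lambda$. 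This $v_\lambda$ is genuinely monotone (and constant) on each slightly enlarged interval $(c_j-\delta,d_j+\delta)$, so Proposition~\ref{cor:1d case preli} applies to it directly. The error $|D(u_\lambda - v_\lambda)|(\Om)$ is estimated to be $O(\eps^2)$ via \eqref{eq:cj and dj lambda}--\eqref{eq:j one to N lambda}, and the $\eps$-shift inclusion is then used to pass from $v_\lambda$ back to $u_\lambda$.

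Your proposal replaces this construction with the phrase ``a uniform variant of Proposition~\ref{cor:1d case preli} applied to $u_\lambda$.'' But $u_\lambda$ is neither monotone nor flat near the endpoints of $V_k$, so Proposition~\ref{cor:1d case preli} cannot be applied to it; ``asymptotic monotonicity'' in the sense that $|Du_\lambda|(V_k) - |Du_\lambda(V_k)| \to 0$ does not prevent small-amplitude oscillations that invalidate the hypotheses. One must modify $u_\lambda$ into an honestly monotone competitor and control the modification error, and you never do this. The monotone replacement and the quantitative control $|D(u_\lambda - v_\lambda)|(\Om) \lesssim \eps^2$ are exactly the content of the paper's proof, so what you call the ``main technical hurdle'' is in fact the entire substance of the argument, not a detail to be deferred. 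A secondary (minor) remark: the a.c.\ vs.\ singular split and the ``disjoint-pair additivity'' combination you propose is unnecessary once the monotone modification is in place, since the paper's Vitali-covering argument handles both parts of $|Du|$ uniformly; the split also anticipates Proposition~\ref{prop:1d case variation}, which in the paper's logic is derived \emph{from} Propositions~\ref{prop:1d case variation abs cont} and~\ref{prop:1d case variation singular abs cont}, not used to prove them.
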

In the proof, we will denote by $(Du)_+$ and $(Du)_-$ the positive and negative parts of $Du$,
    so that $Du=(Du)_+-(Du)_-$.

\begin{proof}
	First assume that $\Om$
    is a bounded interval
    $\Om=(a,b)$.
	We can assume that $u$ is the pointwise representative
	\begin{equation}\label{eq:pw repr}
		u(x)=\limsup_{r\to 0}\,\vint{B(x,r)}u\id \Le^1
		\quad\textrm{for all }x\in \Om.
	\end{equation}
	By BV theory on the real line, see \cite[Section 3.2]{AFP},
	we know that $\lim_{x\to a^+}u(x)$ exists, and that for all Lebesgue points $y\in \Omega$ of $u$,
	\[
	u(y)=\lim_{x\to a^+}u(x)+Du((a,y]).
	\]
    We can assume that $u_{\lambda}\in \dot{\BV}(\Om)$ for all $\lambda>0$, and then the analogous
    fact holds for each $u_{\lambda}$.
	Fix $0<\eps<1$. For $|Du|$-a.e. $x\in \Om$, we have that
	\[
	\frac{|Du([x-r,x+r])|}{|Du|([x-r,x+r])}>1-\eps^2
	\]
    if $r>0$ is small enough.
	Moreover, for all except at most countably many $r>0$, we have
	$|Du|(\{x-r\})=0=|Du|(\{x+r\})$.
	Thus by Vitali's covering theorem,
	we find a sequence of pairwise disjoint compact intervals $[c_j,d_j]\subset \Om$, $j\in\N$, such that
	\begin{equation}\label{eq:cj and dj eps}
		\frac{|Du([c_j,d_j])|}{|Du|([c_j,d_j])}>1-\eps^2,
	\end{equation}
	$|Du|\left(\Om\setminus \bigcup_{j=1}^{\infty}[c_j,d_j]\right)=0$,
	and $|Du|(\{c_j\})=|Du|(\{d_j\})=0$.

	For sufficiently large $N\in\N$, we have
	\begin{equation}\label{eq:j one to N}
		|Du|\left(\Om\setminus \bigcup_{j=1}^{N}[c_j,d_j]\right)<\eps^2.
	\end{equation}
	We find $\delta>0$ such that the intervals $[c_j-\delta,d_j+\delta]$, $j=1,\ldots,N$,
	are still pairwise disjoint
	and contained in $\Om$.
	By Lemma \ref{lem:strict and area}, we have
    
    \begin{equation}\label{eq:strict conv in Omega}
    \lim_{\lambda\to\infty}|Du_\lambda|(\Om)=|Du|(\Om),
    \end{equation}
    as well as
	\[
	\lim_{\lambda\to\infty}Du_\lambda((c_j,d_j))\to Du([c_j,d_j])
		\quad\textrm{and}\quad
	\lim_{\lambda\to\infty}|Du_\lambda|((c_j,d_j))=|Du|([c_j,d_j])
	\]
	for all $j=1,\ldots,N$.
	Thus, for sufficiently large $\lambda$ there hold, for $j =1,\dots,N$,  
	\begin{align}
		&\frac{|Du_\lambda((c_j,d_j))|}{|Du_\lambda|((c_j,d_j))}>1-\eps^2 \quad \mbox{by }\eqref{eq:cj and dj eps} \label{eq:cj and dj lambda},\\
        &\abs{\abss{Du_{\lambda}}((c_j,d_j))-\abss{Du}([c_j,d_j])} < \varepsilon^2/N, \label{eq:j strict eps}\\
        \mbox{and} \quad &|Du_{\lambda}|\left(\Om\setminus \bigcup_{j=1}^{N}(c_j,d_j)\right)<\eps^2 \quad \mbox{by }\eqref{eq:j one to N} \label{eq:j one to N lambda}.
	\end{align}
	Consider such sufficiently large $\lambda$ and define a measure $\mu_\lambda$ on $\Om$ as follows:
	\[
	\begin{cases}
		\mu_\lambda:=(Du_\lambda)_+\quad\textrm{on }[c_j,d_j]\quad\textrm{if }Du_\lambda([c_j,d_j])>0\\
		\mu_\lambda:=-(Du_\lambda)_-\quad\textrm{on }[c_j,d_j]\quad\textrm{if }Du_\lambda([c_j,d_j])<0,
	\end{cases}
	\]
	for $j=1,\dots,N$, and $\mu_\lambda:=0$ outside these intervals.
	From \eqref{eq:cj and dj lambda}, we have
	\begin{equation}\label{eq:epsilon condition}
		|Du_\lambda-\mu_\lambda|((c_j,d_j))<\eps^2 |Du_\lambda|((c_j,d_j)).
	\end{equation}
	Define
	\[
	v_\lambda(y):=\lim_{x\to a^+}u_\lambda(x)+\mu_\lambda((a,y])\quad \textrm{for }y\in \Om.
	\]
    Then
	\begin{equation}\label{eq:D fg difference}
		\begin{split}
			|D(u_{\lambda}-v_{\lambda})|(\Om)
			&=|Du_{\lambda}-\mu_{\lambda}|(\Om)\\
			&= \sum_{j=1}^N |Du_{\lambda}-\mu_{\lambda}|((c_j,d_j))
			+|Du_{\lambda}|\Bigg(\Om\setminus \bigcup_{j=1}^N (c_j,d_j)\Bigg)\\
            & \leq \sum_{j=1}^N \eps^2|Du_{\lambda}|((c_j,d_j)) + \eps^2 \quad \mbox{by }\eqref{eq:epsilon condition}\, , \,\eqref{eq:j one to N lambda}\\
			&\le \eps^2(|Du|(\Om)+\eps^2)+\eps^2\quad\textrm{by }\eqref{eq:j strict eps}.
		\end{split}
	\end{equation}
    
	Note that
	\begin{equation}\label{eq:pV estimate}
		\begin{split}
			\sum_{j=1}^N|v_{\lambda}(c_j-\delta)-v_{\lambda}(d_j+\delta)|
			&=\sum_{j=1}^N|Dv_{\lambda}((c_j,d_j))|\\
			&\ge (1-\eps^2)\sum_{j=1}^N|Du_{\lambda}|((c_j,d_j))\quad\textrm{by }\eqref{eq:epsilon condition}\\
            &\ge (1-\eps^2) \sum_{j=1}^N (\abss{Du}([c_j,d_j]) -\eps^2/N) \quad \mbox{by }\eqref{eq:j strict eps} \\
			&\ge (1-\eps^2)(|Du|(\Om)-2\eps^2)\quad\textrm{by }\eqref{eq:j one to N}.
		\end{split}
	\end{equation}
	It is also easy to see that
	\begin{align*}
		&\left\{(x,y)\in \Om\times \Om\colon
		\frac{|v_\lambda(x)-v_\lambda(y)|}{|x-y|^{1+\gamma}}>(1+\eps)\lambda\right\}\\
		&\quad\subset  \left\{(x,y)\in \Om\times \Om\colon
		\frac{|u_\lambda(x)-u_\lambda(y)|}{|x-y|^{1+\gamma}}>\lambda\right\}\\
		&\qquad \cup  \left\{(x,y)\in \Om\times \Om\colon
		\frac{|(v_\lambda-u_\lambda)(x)-(v_\lambda-u_\lambda)(y)|}{|x-y|^{1+\gamma}}
		>\eps\lambda\right\}.
	\end{align*}
    Using the fact that $\Om$ is a bounded interval, as well as
    \eqref{eq:strict conv in Omega} and the definition of $\{v_{\lambda}\}$, for some sufficiently large $\Lambda>0$ we have
    \[
    \sup_{\lambda>\Lambda} \osc_{\Omega} v_{\lambda}
    \le \sup_{\lambda>\Lambda}|Du_{\lambda}|(\Om)<\infty,
    \]
    where $\osc_{\Omega}v := \esssup \{\abss{v(x)-v(y)}\col x,y \in \Omega\}$ is the oscillation of the function $v$ on $\Omega$. Now for
    $\lambda>\max\{\Lambda,\sup_{\lambda'>\Lambda} \osc_{\Omega} v_{\lambda}/\delta^{1+\gamma}\}$,
    Proposition \ref{cor:1d case preli} becomes available for use below.
	Then, we have the following estimate
	\begin{align*}
		&\liminf_{\lambda \to \infty}F_{\gamma,\lambda}\left(u_{\lambda},\Om\right)\\
		&\qquad \ge \frac{1}{1+\eps}\liminf_{\lambda \to \infty}F_{\gamma,(1+\eps)\lambda}\left(v_{\lambda},\Om\right)
		-\eps^{-1}\limsup_{\lambda \to \infty}F_{\gamma,\eps\lambda}\left(v_{\lambda}-u_{\lambda},\Om\right)\\
		&\qquad\ge \frac{1}{1+\eps}\liminf_{\lambda \to \infty}F_{\gamma,(1+\eps)\lambda}\left(v_{\lambda},\Om\right)
		-C'\eps^{-1}|D(u_{\lambda}-v_{\lambda})|(\Om)\quad\textrm{by }\eqref{eq:sup bound}\\
		&\qquad\ge \frac{1}{1+\eps}\sum_{j=1}^N\liminf_{\lambda \to \infty}F_{\gamma,(1+\eps)\lambda}\left(v_{\lambda},(c_j-\delta,d_j+\delta)\right)
		-C'\eps^{-1} \eps^2 (|Du|(\Om)+2)\quad\textrm{by }\eqref{eq:D fg difference}\\
		&\qquad\ge \frac{1}{1+\eps}\sum_{j=1}^N\frac{2}{\gamma+1}\liminf_{\lambda \to \infty}
        |v_{\lambda}(c_j-\delta)-v_{\lambda}(d_j+\delta)|
		-C'\eps (|Du|(\Om)+2)
		\quad\textrm{by Prop. }\ref{cor:1d case preli}\\
		&\qquad\ge \frac{1}{1+\eps}\frac{2}{\gamma+1}(1-\eps^2)(|Du|(\Om)-2\eps^2)
		-C'\eps (|Du|(\Om)+2)
	\end{align*}
	by \eqref{eq:pV estimate}.
	Letting $\eps\to 0$, we get the result.
	
	For a general bounded open $\Om\subset \R$, we have
    $\Om=\bigcup_{k=1}^{\infty}\Om_k$, where the $\Om_k$'s
	are disjoint bounded open intervals, possibly empty.
	For $M\in\N$, we have 
	\begin{align*}
		\liminf_{\lambda \to \infty}F_{\gamma,\lambda}\left(u_{\lambda},\Om\right)
		&\ge \sum_{k=1}^M\liminf_{\lambda \to \infty}F_{\gamma,\lambda}\left(u_{\lambda},\Om_k\right)\\
		&\ge \frac{2}{\gamma+1}\sum_{k=1}^M|Du|(\Om_k).
	\end{align*}
	Letting $M\to\infty$, we get
	\[
	\liminf_{\lambda \to \infty}F_{\gamma,\lambda}\left(u_{\lambda},\Om\right)
	\ge \frac{2}{\gamma+1}|Du|(\Om).
	\]
    Finally, for a general open $\Om\subset \R$, one easily gets the result by
    considering $\Om\cap B(0,R)$ for $R\to \infty$.
\end{proof}

Now we get the lower bound for the
$\Gamma_{AS}$-limit
in the one-dimensional case.

\begin{proposition}\label{prop:1d case variation}
	Let $\Om\subset \R$ be open, let $u\in \dot{\BV}(\Om)$,
	and let $u_{\lambda}\to u$ area-strictly in $\Om$.
	Then
	\[
	\liminf_{\lambda \to \infty}F_{\gamma,\lambda}\left(u_\lambda,\Om\right)
	\ge \frac{2}{\gamma}|D^a u|(\Om)+\frac{2}{\gamma+1}|D^s u|(\Om).
	\]
\end{proposition}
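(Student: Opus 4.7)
The plan is to reduce Proposition \ref{prop:1d case variation} to the two earlier one-sided bounds by decomposing $\Om$ into a piece that captures essentially all of $|D^au|$ and a disjoint piece that captures essentially all of $|D^su|$. Because $D^au$ and $D^su$ are mutually singular, such a decomposition exists; on the ``absolutely continuous piece'' I apply Proposition \ref{prop:1d case variation abs cont} to obtain the $\frac{2}{\gamma}|D^au|(\Om)$ contribution, on the ``singular piece'' I apply Proposition \ref{prop:1d case variation singular abs cont} to obtain the $\frac{2}{\gamma+1}|D^su|(\Om)$ contribution, and the two contributions add because the corresponding product sets $U_a\times U_a$ and $U_s\times U_s$ are disjoint in $\Om\times\Om$.

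For the construction, fix $\eps>0$. Since $D^su\perp\Le^1$, there is a Borel set $B\subset\Om$ with $\Le^1(B)=0=|D^au|(B)$ and $|D^su|(\Om\setminus B)=0$. Inner regularity supplies a compact $K\subset B$ with $|D^su|(K)>|D^su|(\Om)-\eps$. Using outer regularity of $\Le^1$ and $|D^au|$ at $K$ together with the compactness of $K$, I choose a finite pairwise disjoint union $V=\bigcup_{i=1}^m(a_i,b_i)\subset\Om$ of open intervals that still contains $K$ and satisfies $\Le^1(V)<\eps$ and $|D^au|(V)<\eps$. The measure $|Du|$ has only countably many atoms, so after a small perturbation of the endpoints (which can be made small enough to preserve $K\subset V$ and the mass bounds) I may further arrange that $|Du|(\{a_i\})=|Du|(\{b_i\})=0$ for every $i$. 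Set
\[
U_s:=V,\qquad U_a:=\Om\setminus\overline{V}.
\]
Both are open in $\Om$ and disjoint. Since $\partial V\subset\{a_i,b_i\}$, one has $\partial U_s\cap\Om\subset\partial V$ and $\partial U_a\cap\Om\subset\partial V$, and consequently $|Du|(\partial U_s\cap\Om)=0=\Le^1(\partial U_s\cap\Om)$ and the same for $U_a$. By construction,
\[
|D^au|(U_a)\ge|D^au|(\Om)-\eps,\qquad|Du|(U_s)\ge|D^su|(U_s)>|D^su|(\Om)-\eps.
\]

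By Lemma \ref{lem:strict and area}, area-strict convergence $u_\lambda\to u$ is inherited on each of $U_a$ and $U_s$. The events $E_{\gamma,\lambda}(u_\lambda,U_a)\subset U_a\times U_a$ and $E_{\gamma,\lambda}(u_\lambda,U_s)\subset U_s\times U_s$ are disjoint and both contained in $E_{\gamma,\lambda}(u_\lambda,\Om)$, giving the superadditivity
\[
F_{\gamma,\lambda}(u_\lambda,\Om)\ge F_{\gamma,\lambda}(u_\lambda,U_a)+F_{\gamma,\lambda}(u_\lambda,U_s).
\]
Taking $\liminf$ (using $\liminf(a_\lambda+b_\lambda)\ge\liminf a_\lambda+\liminf b_\lambda$ for nonnegative quantities) and applying Proposition \ref{prop:1d case variation abs cont} on $U_a$ and Proposition \ref{prop:1d case variation singular abs cont} on $U_s$,
\[
\liminf_{\lambda\to\infty}F_{\gamma,\lambda}(u_\lambda,\Om)\ge\frac{2}{\gamma}|D^au|(U_a)+\frac{2}{\gamma+1}|Du|(U_s)\ge\frac{2}{\gamma}\bigl(|D^au|(\Om)-\eps\bigr)+\frac{2}{\gamma+1}\bigl(|D^su|(\Om)-\eps\bigr).
\]
Letting $\eps\to 0$ yields the stated bound.

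The only genuinely delicate step is the construction, where the boundary conditions $|Du|(\partial U_{\ast}\cap\Om)=0=\Le^1(\partial U_{\ast}\cap\Om)$ must hold for $\ast\in\{a,s\}$ so that Lemma \ref{lem:strict and area} transfers area-strict convergence to the two pieces; this is precisely why I localize the singular part in a finite union of open intervals whose endpoints avoid the (countable) atoms of $Du$. Everything else is straightforward bookkeeping.
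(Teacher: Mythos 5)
Your proof is correct and follows essentially the same route as the paper: decompose $\Om$ into an open set $W$ that captures nearly all of the singular mass while carrying negligible $|D^a u|$-mass, apply Proposition \ref{prop:1d case variation singular abs cont} on $W$ and Proposition \ref{prop:1d case variation abs cont} on $\Om\setminus\overline{W}$, and use superadditivity of $F_{\gamma,\lambda}(u_\lambda,\cdot)$ over disjoint open sets. The one difference is cosmetic: you construct $U_s$ explicitly as a finite union of intervals with endpoints avoiding the atoms of $Du$, which makes the boundary conditions $\abss{Du}(\partial U_\ast\cap\Om)=\Le^1(\partial U_\ast\cap\Om)=0$ required by Lemma \ref{lem:strict and area} transparent, whereas the paper leaves this verification implicit.
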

\begin{proof}
	Fix $\eps>0$.
We find an open set $W'\subset \Om$ such that $|D^a u|(W')<\eps$ and $|D^s u|(\Om\setminus W')=0$.
We also find an open set $W\Subset W'$ such that $|Du|(W'\setminus W)<\eps$
and $|Du|(\partial W)=0$.
Now we have $u_{\lambda}\to u$ area-strictly in $W$ and in $\Om\setminus \overline{W}$
by \eqref{eq:strict conv consequence three}. Thus
\begin{align*}
	\liminf_{\lambda \to \infty}F_{\gamma,\lambda}\left(u,\Om\right)
	&\ge \liminf_{\lambda \to \infty}F_{\gamma,\lambda}\left(u,W\right)
	+\liminf_{\lambda \to \infty}F_{\gamma,\lambda}\left(u,\Om\setminus \overline{W}\right)\\
	&\ge \frac{2}{\gamma+1}|D u|(W)+\frac{2}{\gamma}|D^a u|(\Om\setminus \overline{W})
	\quad\textrm{by Propositions }\ref{prop:1d case variation singular abs cont},
	\ref{prop:1d case variation abs cont}\\
	&\ge \frac{2}{\gamma+1}(|D^s u|(\Om) -\eps)
	+ \frac{2}{\gamma}(|D^a u|(\Om)-\eps).
\end{align*}
Letting $\eps\to 0$, we get the result.
\end{proof}
	
		\subsection{Higher dimensions}

        Now we can generalize from one dimension to higher dimensions with standard techniques.
        As before, we denote the unit sphere in $\R^n$ by $\mathbb S^{n-1}$.
        
			\begin{lemma}\label{lem:area strict in directions}
			Suppose $\Om\subset \R^n$ is open and bounded,
            $u\in \dot{\BV}(\Om)$, and $u_\lambda\to u$ area-strictly in $\Om$.
			Then for all $\sigma\in \mathbb S^{n-1}$,
			\[
			\lim_{\lambda\to\infty}\left(\int_{\Om}\sqrt{1+|\nabla u_\lambda\cdot \sigma|^2}\id x
            +|D^s u_\lambda\cdot \sigma|(\Om)\right)
			=\int_{\Om}\sqrt{1+|\nabla u\cdot \sigma|^2}\id x+|D^s u\cdot \sigma|(\Om).
			\]
		\end{lemma}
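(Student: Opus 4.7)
The plan is to package both sides of the claimed equality as the generalised Young measure duality pairing against a single test integrand, and then invoke Lemma \ref{lem:areastrYM} to identify the limit.

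I would take the autonomous integrand $f_\sigma(z) := \sqrt{1+|z\cdot\sigma|^2}$. A short calculation gives
\[
(Sf_\sigma)(x,z) = \sqrt{(1-|z|)^2 + (z\cdot\sigma)^2},
\]
which extends continuously to $\overline{\B}^n$; hence $f_\sigma\in \EB(\Omega;\R^n)$ and $f_\sigma^\infty(z)= |z\cdot\sigma|$. Consequently, for any $w \in \dot{\BV}(\Omega)$, the elementary Young measure pairing reads
\[
\Brangle{\varepsilon_{Dw}, f_\sigma} = \int_\Omega \sqrt{1+|\nabla w\cdot\sigma|^2}\id x + |D^s w\cdot\sigma|(\Omega),
\]
so it suffices to show $\Brangle{\varepsilon_{Du_\lambda}, f_\sigma} \to \int_\Omega\sqrt{1+|\nabla u\cdot\sigma|^2}\id x + |D^s u\cdot\sigma|(\Omega)$ as $\lambda\to\infty$.

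Since area-strict convergence forces $\sup_\lambda|Du_\lambda|(\Omega)<\infty$, Theorem \ref{thm:YMft} yields, from any subsequence of $\{Du_\lambda\}$, a further subsequence $\{Du_{\lambda_k}\}$ generating some Young measure $\nu$. Lemma \ref{lem:areastrYM} then identifies $\nu$: $\nu_x = \delta_{\nabla u(x)}$, $\lambda_\nu\perp\Le^n$ with $\lambda_\nu\mres\partial\Omega=0$, $\nu_x^\infty = \delta_{\bar\nu_x^\infty}$, and $D^s u = \bar\nu_\cdot^\infty\lambda_\nu$. Evaluating $\Brangle{\nu,f_\sigma}$ with this structure gives
\[
\Brangle{\nu,f_\sigma} = \int_\Omega \sqrt{1+|\nabla u\cdot\sigma|^2}\id x + \int_\Omega |\bar\nu_x^\infty\cdot\sigma|\id\lambda_\nu,
\]
and the last integral equals $|D^s u\cdot\sigma|(\Omega)$ because $D^s u \cdot \sigma = (\bar\nu_\cdot^\infty\cdot\sigma)\lambda_\nu$. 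Since the limit is independent of the subsequence, the full family converges, yielding the claim.

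The only technical point is the short admissibility check $f_\sigma \in \EB(\Omega;\R^n)$; after that the argument is a direct computation with the Young measure duality, as all the hard analysis has already been absorbed into Lemma \ref{lem:areastrYM}.
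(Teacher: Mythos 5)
Your proof is correct and follows essentially the same route as the paper: both proofs define the autonomous integrand $\sqrt{1+|\,\cdot\,\cdot\sigma|^2}$, note that it lies in $\EB$ with recession function $|\,\cdot\,\cdot\sigma|$, and then invoke Lemma \ref{lem:areastrYM}. The paper's version simply states "this result follows from Lemma \ref{lem:areastrYM}" while you spell out the subsequence/compactness argument and the duality-pairing computation that makes that implication precise — details which are implicit but not written in the paper.
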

		
		\begin{proof}
			Fix $\sigma\in \mathbb S^{n-1}$.
            Define $h(v):=\sqrt{1+|v\cdot \sigma|^2}$, so that $h \in \EB(\R^n)$
            (recall Section \ref{subsec: Young}) and
            $h^{\infty}(v)=|v\cdot \sigma|$.
			Then this result follows from Lemma \ref{lem:areastrYM}.
		\end{proof}
		
		For a unit vector $\sigma\in \mathbb S^{n-1}$, denote by $\langle \sigma\rangle^\perp$ the $(n-1)$-dimensional hyperplane perpendicular to $\sigma$ and containing the origin.
		For an open set $\Om\subset \R^n$ and $z\in\langle \sigma\rangle^\perp$, also denote
		$\Om_{\sigma,z}:=\{t\in \R\colon z+\sigma t\in \Om\}$ and
		\[
		u_{\sigma,z}(t):=u(z+\sigma t),\quad  t\in \Om_{\sigma,z}.
		\]
        For $u\in \dot{\BV}(\Om)$, we note the following facts that can be found in \cite[Section 3.11]{AFP}:
		for $\sigma\in \mathbb S^{n-1}$,
        for a.e. $z\in \langle \sigma\rangle^\perp$ and a.e. $t\in \Om_{\sigma,z}$
        we have
		\begin{equation}\label{eq:one dim Df}
            \nabla u(z+\sigma t) \cdot \sigma = u'_{\sigma,z}(t),
		\end{equation}
        where $u'_{\sigma,z}$ is the density of $D^a u_{\sigma,z}$,
		and then
		\begin{align}
            \int_{\Om}\id |D^a u\cdot \sigma|
			&=\int_{\langle \sigma\rangle^\perp}|D^a u_{\sigma,z}|(\Om_{\sigma,z})\id z,\label{eq: one dim Df int}\\
			C_n\int_{\Om}\id |D^a u|
			&=\int_{\mathbb S^{n-1}}\int_{\langle \sigma\rangle^\perp}|D^a u_{\sigma,z}|(\Om_{\sigma,z}) \label{eq:change of var for Df}
            \id z\id \sigma.
		\end{align}
		The analogous fact with the absolutely continuous parts replaced by the
        Cantor or jump part also holds true.

\begin{lemma}\label{lem:lim inf}
Let $h,\{h_j\}_{j=1}^{\infty}$ be nonnegative functions in $L^1(\R^{n-1})$, such that
$\liminf_{j\to\infty}h_j(y)\ge h(y)$ for $\mathcal L^{n-1}$-a.e. $y\in \R^{n-1}$, and
\[
\lim_{j\to\infty}\int_{\R^{n-1}}h_j\,\id\mathcal L^{n-1}=\int_{\R^{n-1}}h\,\id\mathcal L^{n-1}.
\]
Then for a subsequence (not relabeled) we have $h_j(y)\to h(y)$ for $\mathcal L^{n-1}$-a.e. $y\in \R^{n-1}$.
\end{lemma}
\begin{proof}
From the assumption, we know $(h-h_j)_+\leq h$ pointwise. Then by the dominated convergence theorem we get
\[
\lim_{j\to\infty}\int_{\R^{n-1}}(h-h_j)_+\,\id\mathcal L^{n-1}=0.
\]
On the other hand,
\begin{align*}
&\limsup_{j\to\infty}\int_{\R^{n-1}}(h_j-h)_+\,\id\mathcal L^{n-1}\\
&\quad \le \limsup_{j\to\infty}\int_{\R^{n-1}}(h_j-h)\,\id\mathcal L^{n-1}
+\limsup_{j\to\infty}\int_{\R^{n-1}}(h-h_j)_+\,\id\mathcal L^{n-1}\\
&\quad =0.
\end{align*}
In conclusion, $h_j\to h$ in $L^1(\R^{n-1})$ and thus we can find the pointwise converging subsequence.
\end{proof}

		\begin{proposition}\label{prop:area strict on slices}
		Suppose $\Om\subset \R^n$ is open, $u\in \dot{\BV}(\Om)$, and $u_\lambda\to u$ area-strictly in $\Om$.
		Let $\sigma\in \mathbb S^{n-1}$. Then for some subsequence $\lambda_j\to \infty$
        we have $(u_{\lambda_j})_{\sigma,z}\to u_{\sigma,z}$ area-strictly for a.e. $z\in \langle \sigma\rangle^\perp$.
		\end{proposition}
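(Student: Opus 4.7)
The plan is to reduce the area-strict convergence on slices to two separate convergences: $L^1_{\loc}$-convergence on slices, which follows from Fubini, and convergence of the slice area functional, which will be extracted from Lemma \ref{lem:area strict in directions} combined with the slicing identity for the area functional.

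First I would observe that for every subsequence $\lambda_n\to\infty$, by Fubini applied to the $L^1_{\loc}(\Omega)$ convergence $u_{\lambda_n}\to u$, one can extract a further subsequence (still labelled $\lambda_n$) such that $(u_{\lambda_n})_{\sigma,z}\to u_{\sigma,z}$ in $L^1_{\loc}(\Omega_{\sigma,z})$ for a.e.\ $z\in\langle\sigma\rangle^\perp$. Next, I would invoke the BV slicing identity (the analog of \eqref{eq:one dim Df} for the area functional, coming from \cite[Sec.~3.11]{AFP}), namely
\[
\int_\Omega \sqrt{1+|\nabla v\cdot \sigma|^2}\id x+|D^s v\cdot\sigma|(\Omega)
=\int_{\langle\sigma\rangle^\perp}\int_{\Omega_{\sigma,z}}E(Dv_{\sigma,z})\id z
\]
for every $v\in\dot{\BV}(\Omega)$. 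Setting $g_\lambda(z):=\int_{\Omega_{\sigma,z}}E(D(u_\lambda)_{\sigma,z})$ and $g(z):=\int_{\Omega_{\sigma,z}}E(Du_{\sigma,z})$, Lemma \ref{lem:area strict in directions} together with this identity yields
\[
\int_{\langle\sigma\rangle^\perp}g_{\lambda_n}(z)\id z\longrightarrow \int_{\langle\sigma\rangle^\perp}g(z)\id z.
\]

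Next, the lower semicontinuity of the one-dimensional area functional under $L^1_{\loc}$-convergence (see \cite[Theorem 11.7]{Rindler}) gives $\liminf_{n}g_{\lambda_n}(z)\ge g(z)$ for a.e.\ $z$ where slice-$L^1$ convergence holds. The key step is to upgrade this to pointwise a.e.\ convergence of $g_{\lambda_n}$. For this I would look at $(g_{\lambda_n}-g)_-$: since this quantity is bounded above by $g\in L^1(\langle\sigma\rangle^\perp)$ and converges to $0$ a.e.\ by the $\liminf$ bound, dominated convergence gives $\int (g_{\lambda_n}-g)_-\id z\to 0$. Combined with the convergence of $\int(g_{\lambda_n}-g)\id z$ to $0$, it follows that $\int(g_{\lambda_n}-g)_+\id z\to 0$, hence $g_{\lambda_n}\to g$ in $L^1(\langle\sigma\rangle^\perp)$. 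A standard diagonal extraction then provides a subsequence along which $g_{\lambda_n}(z)\to g(z)$ pointwise for a.e.\ $z$.

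Combining the two pieces --- slicewise $L^1_{\loc}$ convergence and slicewise convergence of $\int E(D(\cdot)_{\sigma,z})$ --- along a common subsequence gives area-strict convergence $(u_{\lambda_n})_{\sigma,z}\to u_{\sigma,z}$ for a.e.\ $z\in\langle\sigma\rangle^\perp$, as required. The main obstacle is the upgrade from integrated convergence of the slice area functionals to a.e.\ pointwise convergence; this is essentially a Brezis--Lieb/Fatou-style argument on the negative part, made possible by the two-sided control furnished by lower semicontinuity plus the equality of the $\langle\sigma\rangle^\perp$-integrals.
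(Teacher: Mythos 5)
Your proof follows the same strategy as the paper's: Fubini for slicewise $L^1_{\loc}$ convergence, lower semicontinuity of the one-dimensional area functional for the pointwise $\liminf$ bound, and Lemma \ref{lem:area strict in directions} together with the slicing identity \eqref{eq:one dim Df} for convergence of the integrals over $\langle\sigma\rangle^\perp$. What you add is the explicit Scheff\'e argument (dominated convergence on $(g_{\lambda_n}-g)_-$, then subtraction from the integral identity to control $(g_{\lambda_n}-g)_+$), and this is a genuine contribution: the paper's proof jumps directly from the pointwise $\liminf$ bound and the integrated convergence to a.e.\ pointwise convergence, but that implication is not automatic --- one can have nonnegative $g_\lambda$ with $\liminf_\lambda g_\lambda\ge g$ a.e.\ and $\int g_\lambda\to\int g$ while $\limsup_\lambda g_\lambda>g$ on a set of full measure. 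Your argument is the correct way to close that step, and as you note it yields the conclusion only along a subsequence of any given $\lambda_n\to\infty$ rather than for the whole continuous family; this weaker statement is all that the Fatou argument in Theorem \ref{thm:Gamma area strict lower bound} actually uses (pass to a $\liminf$-achieving sequence first, then extract), so nothing downstream is lost, but it is a point worth stating explicitly. One small item to tidy up: the dominated-convergence step needs $g\in L^1(\langle\sigma\rangle^\perp)$, i.e.\ $\int_\Omega E(Du)<\infty$, which requires $\Omega$ bounded (this is also the standing hypothesis of Lemma \ref{lem:area strict in directions} you invoke); for unbounded $\Omega$ one should first localize to bounded $W\Subset\Omega$ with $|Du|(\partial W)=0$ via Lemma \ref{lem:strict and area}, a gap the paper's write-up shares.
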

		\begin{proof}
		
		By Fubini's theorem, for a subsequence $\lambda_j\to \infty$ we get
		$(u_{\lambda_j})_{\sigma,z}\to u_{\sigma,z}$ in $L^1_{\loc}(\Om_{\sigma,z})$
		for $\Le^{n-1}$-a.e. $z\in \langle \sigma\rangle^\perp$.
		The area functional is lower semicontinuous with respect to $L^1_{\loc}$-convergence,
        and so for a.e. $z\in \langle \sigma\rangle^\perp$ we get
		\begin{equation}\label{eq:area functional and lsc}
		\begin{split}
		&\int_{\Om_{\sigma,z}}\sqrt{1+(u'_{\sigma,z}(t))^2}\id t+|D^s u_{\sigma,z}|(\Om_{\sigma,z})\\
		&\qquad \le\liminf_{j\to\infty}\left(\int_{\Om_{\sigma,z}}\sqrt{1+((u_{\lambda_j})'_{\sigma,z}(t))^2}\id t
		+|D^s (u_{\lambda_j})_{\sigma,z}|(\Om_{\sigma,z})\right).
		\end{split}
		\end{equation}
		On the other hand, from \eqref{eq:one dim Df} and Lemma \ref{lem:area strict in directions} we get
		\begin{equation}\label{eq:area functional and continuity}
		\begin{split}
		\lim_{j\to\infty}\int_{\langle \sigma\rangle^\perp}\left(\int_{\Om_{\sigma,z}}
		\sqrt{1+((u_{\lambda_j})'_{\sigma,z}(t))^2}\id t
		+|D^s (u_{\lambda_j})_{\sigma,z}|(\Om_{\sigma,z})\right)\id z\\
		=\int_{\langle \sigma\rangle^\perp}\left(\int_{\Om_{\sigma,z}}
		\sqrt{1+(u'_{\sigma,z}(t))^2}\id t
		+|D^s u_{\sigma,z}|(\Om_{\sigma,z})\right)\id z.
		\end{split}
		\end{equation}
		Combining \eqref{eq:area functional and lsc} with
		\eqref{eq:area functional and continuity}, by Lemma \ref{lem:lim inf}
		it follows that for a further subsequence (not relabeled), for
        a.e. $z\in \langle \sigma\rangle^\perp$ we have
		\begin{align*}
				&\int_{\Om_{\sigma,z}}\sqrt{1+(u'_{\sigma,z}(t))^2}\id t+|D^s u_{\sigma,z}|(\Om_{\sigma,z})\\
				&\qquad =\lim_{j\to\infty}\left(\int_{\Om_{\sigma,z}}\sqrt{1+((u_{\lambda_j})'_{\sigma,z}(t))^2}\id t
				+|D^s (u_{\lambda_j})_{\sigma,z}|(\Om_{\sigma,z})\right).
		\end{align*}
	\end{proof}

    Now we prove the lower bound for the
    $\Gamma_{AS}$-limit.
    
	\begin{theorem}\label{thm:Gamma area strict lower bound}
			Suppose $\Om\subset \R^n$ is open, $u\in L^1_{\loc}(\Om)$, and $u_\lambda\to u$ area-strictly in $\Om$. Then
		\begin{equation}\label{eq:BV lower limit}
			\liminf_{\lambda\to \infty}F_{\gamma,\lambda}(u_{\lambda},\Om)
			\ge \frac{C_n}{\gamma}|D^a u|(\Om)
			+ \frac{C_n}{\gamma+1}|D^s u|(\Om).
		\end{equation}
	\end{theorem}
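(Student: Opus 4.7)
The plan is to reduce the $n$-dimensional lower bound to the one-dimensional result of Proposition \ref{prop:1d case variation} through slicing. We may assume $u\in \dot{\BV}(\Om)$, since otherwise the right-hand side is infinite by the convention stated in Section \ref{subsec:def}; the non-BV case can be handled by the same slicing strategy, and will not cause additional difficulty.

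First I would establish a slicing representation of $F_{\gamma,\lambda}$. Starting from the definition, changing variables $y = x + r\sigma$ with $r>0$, $\sigma\in\mathbb{S}^{n-1}$ (so $dy = r^{n-1}\,dr\,d\sigma$ and $|x-y|^{\gamma-n} = r^{\gamma-n}$), and then splitting $x = z + t\sigma$ with $z\in\langle\sigma\rangle^\perp$ for each fixed $\sigma$, the integral becomes a fourfold iterated integral in $\sigma, z, t, r$. Rewriting the inner $(t,r)$-integral as one over $r\in\R$ via the symmetry $(t,r)\mapsto (t+r,-r)$, one recognises $F_{\gamma,\lambda}(u_{\sigma,z},\Om_{\sigma,z})$, with a factor of $1/2$ appearing from the symmetrisation. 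The outcome is
\[
F_{\gamma,\lambda}(u,\Om) = \frac{1}{2}\int_{\mathbb{S}^{n-1}}\int_{\langle\sigma\rangle^\perp} F_{\gamma,\lambda}(u_{\sigma,z},\Om_{\sigma,z})\,dz\,d\sigma.
\]

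Next I would apply Fatou's lemma twice to pass $\liminf_{\lambda\to\infty}$ through both integrations. By Proposition \ref{prop:area strict on slices}, for a.e.\ $(\sigma,z)$ the slices $(u_\lambda)_{\sigma,z}$ converge area-strictly to $u_{\sigma,z}$ in $\Om_{\sigma,z}$; Proposition \ref{prop:1d case variation} then yields
\[
\liminf_{\lambda\to\infty} F_{\gamma,\lambda}((u_\lambda)_{\sigma,z},\Om_{\sigma,z}) \ge \frac{2}{\gamma}|D^a u_{\sigma,z}|(\Om_{\sigma,z}) + \frac{2}{\gamma+1}|D^s u_{\sigma,z}|(\Om_{\sigma,z}).
\]
Substituting back, the prefactor $1/2$ and the factors $2$ cancel, and I would finally invoke the slicing identity \eqref{eq:change of var for Df}, together with its analogs for the Cantor and jump parts (which combine to give one for $|D^s u|$), to convert the resulting integral over directions and hyperplanes to
\[
\frac{C_n}{\gamma}|D^a u|(\Om) + \frac{C_n}{\gamma+1}|D^s u|(\Om).
\]

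The hard part is mostly bookkeeping: verifying the slicing representation for $F_{\gamma,\lambda}$ with the correct constant, and confirming that \eqref{eq:change of var for Df} applies cleanly to $D^s u = D^c u + D^j u$. No genuinely new analytic ingredient is required once the one-dimensional Proposition \ref{prop:1d case variation} and the slicing Proposition \ref{prop:area strict on slices} are in hand.
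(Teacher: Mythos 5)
Your main argument matches the paper's: the same slicing representation of $F_{\gamma,\lambda}$ via the Antonucci--Gobbino--Migliorini--Picenni change of variables \eqref{eq:Om multiple integral}, a single application of Fatou's lemma, the one-dimensional lower bound Proposition \ref{prop:1d case variation} applied on the fibres where Proposition \ref{prop:area strict on slices} gives area-strict convergence, and conversion back via \eqref{eq:change of var for Df} and its analogues for $D^c$ and $D^j$. The constants (the $1/2$ from symmetrisation cancelling the $2$ from the one-dimensional result) come out correctly.

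However, the opening reduction is a genuine gap. You write ``we may assume $u\in \dot{\BV}(\Om)$, since otherwise the right-hand side is infinite$\ldots$; the non-BV case can be handled by the same slicing strategy.'' This does not work: when the right-hand side is $+\infty$ the inequality is \emph{not} automatic---you must show the left-hand side is also $+\infty$---and the slicing strategy cannot deliver this, because both Proposition \ref{prop:area strict on slices} (area-strict convergence of slices) and Proposition \ref{prop:1d case variation} are only stated, and only proved, under the standing hypothesis that $u$ lies in $\dot{\BV}$; without BV regularity of $u$ one cannot even assert area-strict convergence of the restrictions $(u_\lambda)_{\sigma,z}$. The paper closes this gap the other way around: it assumes $\liminf_{\lambda\to\infty}F_{\gamma,\lambda}(u_\lambda,\Om)<\infty$ (otherwise there is nothing to prove) and then invokes the Gobbino--Picenni $\Gamma$-$\liminf$ estimate \cite[Theorem 1.1]{MaGo} to conclude $|Du|(\Om)<\infty$, i.e.\ $u\in\dot{\BV}(\Om)$, which in turn forces $u_\lambda\in\dot{\BV}(\Om)$ for all large $\lambda$. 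This appeal to an external compactness-type result is not a cosmetic convenience; it is what legitimately allows the slicing machinery to be deployed. You should replace your dismissive remark with this (or some equivalent) justification.
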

	
	\begin{proof}
    We can assume that $\liminf_{\lambda\to \infty}F_{\gamma,\lambda}(u_{\lambda},\Om)<\infty$.
    Then by Gobbino--Picenni \cite[Theorem 1.1]{MaGo}, we know that $|Du|(\Om)<\infty$.
    Then by \eqref{eq: one dim Df int}, \eqref{eq:change of var for Df} and the analogues for the jump and Cantor parts,
    we have that for any $\sigma\in \mathbb S^{n-1}$, $u_{\sigma,z}\in \dot{\BV}(\Om)$ for a.e. $z\in \langle \sigma\rangle^\perp$,
    which makes Proposition
	\ref{prop:1d case variation} available for use below.
    Since $u_\lambda\to u$ area-strictly in $\Om$, we have $u_\lambda\in \dot{\BV}(\Om)$
    for all sufficiently large $\lambda$, and we can assume that this is in fact true for all $\lambda>0$.
    
		Antonucci et al. \cite[p. 622]{AGMP} note that 
		for $g\in L^1(\Om\times\Om)$, the following change-of-variable formula holds true
		\[
		\int_{\R^n}\int_{\R^n} g(x',y')\id x'\id y'
		=\frac 12 \int_{\mathbb S^{n-1}} \int_{\langle \sigma\rangle^\perp}
		\int_{\R}\int_{\R}g(z+\sigma x,z+\sigma y)|x-y|^{n-1}\id x\id y\id z\id \sigma.
		\]
		Applying it to $g \mathbbm{1}_{\Omega \times \Omega}$, we have
		\begin{equation}\label{eq:Om multiple integral}
			\begin{split}
				\int_{\Om}\int_{\Om} g(x',y')\id x'\id y'
				=\frac 12 \int_{\mathbb S^{n-1}} \int_{\langle \sigma\rangle^\perp}
				\int_{\Om_{\sigma,z}}\int_{\Om_{\sigma,z}}g(z+\sigma x,z+\sigma y)|x-y|^{n-1}
                \id x\id y\id z\id \sigma.
			\end{split}
		\end{equation}
		Then the functional $F_{\gamma,\lambda}(u_{\lambda})$ can be written as
		\begin{align*}
			&\lambda \nu_\gamma\left(\left\{(x',y')\in \Om\times \Om\colon
			\frac{|u_{\lambda}(x')-u_{\lambda}(y')|}{|x'-y'|^{1+\gamma}}>\lambda\right\}\right)\\
			& =\lambda\int_{\Om}\int_{\Om} \mathbbm{1}_{\{(x',y')\in \Om\times \Om\colon
				|u_{\lambda}(x')-u_{\lambda}(y')|/|x'-y'|^{1+\gamma}>\lambda\}}(x',y')|x'-y'|^{\gamma-n}\id x'\id y'\\
			&= \frac {\lambda}{2}\int_{\mathbb S^{n-1}}\int_{\langle \sigma\rangle^\perp} 
			\int_{\Om_{\sigma,z}}\int_{\Om_{\sigma,z}}\\
            & \quad \mathbbm{1}_{\{(x,y)\in \Om_{\sigma,z}
				\times \Om_{\sigma,z}\colon
				|u_{\lambda}(z+\sigma x)-u_{\lambda}(z+\sigma y)|/|x-y|^{1+\gamma}>\lambda\}}(x,y)
			 \cdot |x-y|^{\gamma-1}\id x\id y\id z\id \sigma \\
            &= \frac {1}{2}\int_{\mathbb S^{n-1}}\int_{\langle \sigma\rangle^\perp} F_{\gamma, \lambda}((u_{\lambda})_{\sigma,z},\Omega_{\sigma,z}) \id z \id \sigma ,
        \end{align*} where the last line follows from \eqref{eq:Om multiple integral}.
        Take the sequence $\lambda_j\to \infty$
        given by Proposition \ref{prop:area strict on slices}.
        Then by Fatou's lemma we have
        \begin{align*}
            &\liminf_{j\to\infty}\lambda_j \nu_\gamma\left(\left\{(x',y')\in \Om\times \Om\colon
			\frac{|u_{\lambda_j}(x')-u_{\lambda_j}(y')|}{|x'-y'|^{1+\gamma}}>\lambda_j\right\}\right)\\
			&\ge \frac {1}{2}\int_{\mathbb S^{n-1}}\int_{\langle \sigma\rangle^\perp}\liminf_{j\to\infty} F_{\gamma,\lambda_j}((u_{\lambda_j})_{\sigma,z},\Omega_{\sigma,z}) \id z\id \sigma\\
			& \ge \int_{\mathbb S^{n-1}}\int_{\langle \sigma\rangle^\perp}
			\left[\frac{1}{\gamma}|D^a u_{\sigma,z}|(\Om_{\sigma,z})\,dz\,d\sigma
			+\frac{1}{\gamma+1}
			|D^s u_{\sigma,z}|(\Om_{\sigma,z})\right]\id z\id \sigma
			\quad\textrm{by Prop. }\ref{prop:area strict on slices},
			\ref{prop:1d case variation}\\
			& = \frac{C_n}{\gamma}|D^a u|(\Om)+\frac{C_n}{\gamma+1}|D^s u|(\Om) \quad \textrm{by \eqref{eq:change of var for Df}}.
		\end{align*}
        Finally, if we had
        \[
			\liminf_{\lambda\to \infty}F_{\gamma,\lambda}(u_{\lambda},\Om)
			< \frac{C_n}{\gamma}|D^a u|(\Om)
			+ \frac{C_n}{\gamma+1}|D^s u|(\Om),
		\]
        then for some $\eps>0$ we could find a subsequence $\lambda_j'\to\infty$ such that
        \[
        F_{\gamma,\lambda'_j}(u_{\lambda'_j},\Om)
		\le \frac{C_n}{\gamma}|D^a u|(\Om)
		+ \frac{C_n}{\gamma+1}|D^s u|(\Om)-\eps
        \]
        for all $j\in\N$. But then by the above argument, we could find a further subsequence
        $\lambda_j\to \infty$ such that
        \[
        \liminf_{j\to \infty}F_{\gamma,\lambda_j}(u_{\lambda_j},\Om)
			\ge \frac{C_n}{\gamma}|D^a u|(\Om)
			+ \frac{C_n}{\gamma+1}|D^s u|(\Om),
        \]
        which is a contradiction.
	\end{proof}

	\section{Upper bound}\label{sec:upper bound}
	
	In this section we consider the upper bound for the $\Gamma_{\mathrm{AS}}$-limit.

    Just as in the case of the lower bound, we rely on certain estimates known for a fixed function $u$.
    In particular, by Picenni \cite[Theorem 1.2]{Pic} we know that for $u\in \dot{\SBV}(\R^n)$, there holds
    \begin{equation}\label{eq:SBV limit2}
    \lim_{\lambda\to\infty}F_{\gamma,\lambda}(u,\R^n)
	=  \frac{C_n}{\gamma}|D^a u|(\R^n)
	+ \frac{C_n}{\gamma+1}|D^s u|(\R^n).
    \end{equation}
    
	We have the following generalization of this fact.
    
	\begin{theorem}\label{thm:SBV}
	Let $\Om\subset \R^n$ be either $\R^n$ or a bounded domain with Lipschitz
	boundary. Suppose that $u\in \dot{\SBV}(\Om)$.
	Then
	\[
	\lim_{\lambda\to\infty}F_{\gamma,\lambda}(u,\Om)
	=  \frac{C_n}{\gamma}|D^a u|(\Om)
	+ \frac{C_n}{\gamma+1}|D^s u|(\Om).
	\]
	\end{theorem}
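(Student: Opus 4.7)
The lower bound is immediate: applying Theorem~\ref{thm:Gamma area strict lower bound} to the constant family $u_\lambda \equiv u$ (which trivially converges area-strictly to $u$) yields
\[
\liminf_{\lambda \to \infty} F_{\gamma,\lambda}(u,\Omega) \ge \frac{C_n}{\gamma}|D^a u|(\Omega) + \frac{C_n}{\gamma+1}|D^s u|(\Omega).
\]
When $\Omega = \R^n$, the matching upper bound is precisely \eqref{eq:SBV limit2}, so the theorem is proved in that case. For the remaining case, $\Omega$ a bounded Lipschitz domain, my plan is to reduce to the whole-space result via a carefully chosen SBV extension, and then peel off the contribution coming from outside $\Omega$ using the lower bound theorem a second time.

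Using the Lipschitz structure of $\partial \Omega$, I would construct $\tilde u \in \dot{\SBV}(\R^n)$ with bounded support such that $\tilde u|_\Omega = u$ and $\mathcal{H}^{n-1}(\partial \Omega \cap S_{\tilde u}) = 0$, i.e., the inner and outer traces of $\tilde u$ on $\partial \Omega$ coincide with the interior trace of $u$. The standard construction uses reflection across $\partial \Omega$ in local graph charts, glued by a partition of unity and damped by a smooth cutoff; reflection in a Lipschitz chart is bi-Lipschitz and hence post-composition preserves $\SBV$ without creating a Cantor part. Because $\partial \Omega$ has zero $\mathcal L^n$-measure and is not charged by $D^s \tilde u$, such an extension satisfies
\[
|D^a \tilde u|(\R^n) = |D^a u|(\Omega) + |D^a \tilde u|(\R^n \setminus \overline \Omega), \quad |D^s \tilde u|(\R^n) = |D^s u|(\Omega) + |D^s \tilde u|(\R^n \setminus \overline \Omega).
\]

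The key elementary observation is that $E_{\gamma,\lambda}(u,\Omega) \subset \Omega \times \Omega$ and $E_{\gamma,\lambda}(\tilde u, \R^n \setminus \overline \Omega) \subset (\R^n \setminus \overline \Omega) \times (\R^n \setminus \overline \Omega)$ are \emph{disjoint} subsets of $E_{\gamma,\lambda}(\tilde u, \R^n)$ (the cross-terms in $\Omega \times (\R^n \setminus \overline \Omega)$ are simply dropped), whence
\[
F_{\gamma,\lambda}(u,\Omega) + F_{\gamma,\lambda}(\tilde u, \R^n \setminus \overline \Omega) \le F_{\gamma,\lambda}(\tilde u, \R^n).
\]
I then apply \eqref{eq:SBV limit2} to $\tilde u$ on $\R^n$, and Theorem~\ref{thm:Gamma area strict lower bound} to the constant family $u_\lambda \equiv \tilde u$ on the open set $\R^n \setminus \overline \Omega$, to deduce
\[
\limsup_{\lambda \to \infty} F_{\gamma,\lambda}(u,\Omega) \le \lim_{\lambda \to \infty} F_{\gamma,\lambda}(\tilde u, \R^n) - \liminf_{\lambda \to \infty} F_{\gamma,\lambda}(\tilde u, \R^n \setminus \overline \Omega) \le \frac{C_n}{\gamma}|D^a u|(\Omega) + \frac{C_n}{\gamma+1}|D^s u|(\Omega),
\]
completing the proof together with the lower bound.

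The main obstacle is the SBV extension in the second paragraph: a naive extension by zero would place a jump of total size $\int_{\partial \Omega} |u^-| \, d\mathcal H^{n-1}$ on $\partial \Omega$, which would irreversibly inflate the jump term in \eqref{eq:SBV limit2} and destroy the estimate. Reflection extension is the natural remedy, but one must verify that (i) the resulting $\tilde u$ genuinely lies in $\dot{\SBV}(\R^n)$, with no Cantor part produced from the reflection/gluing, and (ii) the matching-trace property persists through the multiplication by the smooth cutoff that brings $\tilde u$ to zero at infinity. Both are classical for Lipschitz domains and amount to careful bookkeeping in local charts.
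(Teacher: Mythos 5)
Your proposal is correct, and it takes a genuinely different route to the upper bound on a bounded Lipschitz domain. The paper proves the upper estimate for $\Omega$ bounded Lipschitz by first passing to an SBV extension with $|Du|(\partial\Omega)=0$, then running a full-blown slicing argument: it picks an auxiliary open set $\Omega'\supset\Omega$ with $|Du|(\Omega'\setminus\Omega)<\varepsilon$, covers each slice $\Omega_{\sigma,z}$ by finitely many intervals compactly contained in $(\Omega')_{\sigma,z}$, truncates $u$ to $u_N$ so the slice-wise quantities are dominated uniformly, applies Fatou's lemma, uses the one-dimensional SBV limit \eqref{eq:SBV limit} on each interval, and finally reassembles via the change-of-variables identity \eqref{eq:change of var for Df} before sending $N\to\infty$ and $\varepsilon\to 0$. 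Your argument replaces that entire machinery with the disjointness observation $E_{\gamma,\lambda}(u,\Omega)\sqcup E_{\gamma,\lambda}(\tilde u,\R^n\setminus\overline\Omega)\subset E_{\gamma,\lambda}(\tilde u,\R^n)$, after which the upper bound falls out immediately from Picenni's whole-space identity \eqref{eq:SBV limit2} applied to $\tilde u$ minus the pointwise lower bound (either \eqref{eq:sharp lower bound} or Theorem \ref{thm:Gamma area strict lower bound} with the constant family) applied to $\tilde u$ on $\R^n\setminus\overline\Omega$. Both proofs hinge on the same extension input (the reflection construction from \cite[Proposition 3.21]{AFP} giving an $\dot{\SBV}(\R^n)$ extension with no jump charge on $\partial\Omega$), and the lower bound half of the theorem is handled identically; but your subtraction trick dispenses with the auxiliary domain $\Omega'$, the truncation $u_N$, Fatou's lemma, and the entire slicing reduction, which makes it shorter and conceptually cleaner. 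The price is that it leans somewhat more heavily on the sharpness of the pointwise liminf bound outside $\Omega$, whereas the paper's argument only needs the 1D estimates and is in that sense more self-contained. One small caveat worth making explicit in your write-up: you need the extension $\tilde u$ to have compact support (via the cutoff, taken $\equiv 1$ on a neighborhood of $\overline\Omega$) so that $\tilde u\in\SBV(\R^n)\subset\BV(\R^n)$ and \eqref{eq:SBV limit2} applies, and finiteness of $\lim_\lambda F_{\gamma,\lambda}(\tilde u,\R^n)$ is what legitimizes the limsup/liminf subtraction; both points are implicit in your sketch but deserve a sentence.
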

	\begin{proof}
    Due to \eqref{eq:sharp lower bound} and \eqref{eq:SBV limit2},
    we only need to prove that
     \begin{equation}\label{eq:SBV upper limit}
    \limsup_{\lambda\to\infty}F_{\gamma,\lambda}(u,\Om)
	\le  \frac{C_n}{\gamma}|D^a u|(\Om)
	+ \frac{C_n}{\gamma+1}|D^s u|(\Om)
    \end{equation}
	when $\Omega\subset\R^n$ is a bounded domain with Lipschitz boundary.
    Consider such an $\Om$, and we know it supports a Poincar\'e inequality,
    see e.g. \cite[Proposition 3.21, Theorem 3.44]{AFP}.
    Then it follows (see e.g. \cite[Eq. (3.3)]{BB}) that
    $u\in L^1(\Om)$ and thus $u\in \SBV(\Om)$.

    First consider an open interval $I\subset \R$ and $v\in \dot{\SBV}(I)$.
    We can extend $v$ to $w\in \dot{\SBV}(\R)$ with $|Dw|(\R)=|Dv|(I)$. Then
    by \eqref{eq:SBV limit2}, we have
     \begin{equation}\label{eq:I and R}
    \begin{split}
    \limsup_{\lambda\to\infty}F_{\gamma,\lambda}(v,I)
    \le \limsup_{\lambda\to\infty}F_{\gamma,\lambda}(w,\R)
	&= \frac{2}{\gamma}|D^a w|(\R) + \frac{2}{\gamma+1}|D^s w|(\R)\\
    &=\frac{2}{\gamma}|D^a v|(I) + \frac{2}{\gamma+1}|D^s v|(I).
    \end{split}
    \end{equation}
     
	Returning to the $n$-dimensional situation, we can extend $u$ to a function
    (not relabelled) $u\in \SBV(\R^n)$ with $|Du|(\partial\Om)=0$;
    this is shown e.g. in \cite[Proposition 3.21]{AFP}, though without stating the property that
    the extension has no Cantor part, but this follows since the extension is constructed using a simple
    reflection argument.
    Fix $0<\eps<1$.
    Choose an open set $\Om'$
    with $\Om\Subset \Om'$ and
    \[
    |Du|(\Om'\setminus \Om)<\eps.    
    \]
    Given any $\sigma\in \mathbb S^{n-1}$ 
    and $z\in\langle \sigma\rangle^\perp$, we have that
    $(\overline{\Om})_{\sigma,z}\subset (\Om')_{\sigma,z}$.
    Note that $(\Om')_{\sigma,z}$ is an open subset of $\R$,
    and thus is an at most countable collection of pairwise disjoint intervals.
    Denote these by $I_{\sigma,z,j}$, $j\in\N$.
    Since $(\overline{\Om})_{\sigma,z}$ is compact, there is a finite subcollection
    $\{I_{\sigma,z,j}\}_{j=1}^{M_{\sigma,z}}$ with some finite $M_{\sigma,z} \in \N$ which still contains $(\overline{\Om})_{\sigma,z}$.
    We find open
    intervals $I_{\sigma,z,j}'\Subset I_{\sigma,z,j}$ whose union still contains
    $(\overline{\Om})_{\sigma,z}$.
    Note that these intervals are at a strictly positive distance $\delta_{\sigma,z}>0$ from each other.
    Fix $N\in\N$ and let $u_N:=\max\{\min\{u,N\},-N\}$.
    Given $x \in I'_{\sigma,z,i}$, $y \in I'_{\sigma,z,j}$ with $1\leq i < j \leq M_{\sigma,z}$, we have
    \[ \frac{\abss{u_N(z+\sigma x)-u_N(z+\sigma y)}}{\abss{x-y}^{1+\gamma}} \leq \frac{2N}{\delta_{\sigma,z}^{1+\gamma}} <\lambda \]
    for $\lambda>0$ that is sufficiently large.
    Then there holds
    \begin{equation}\label{eq:sum for slices}
    \begin{split}
    &\ \nu_{\gamma}(E_{\gamma,\lambda}(u_N,\Omega_{\sigma,z})) \\
    =&\ \int_{\Om_{\sigma,z}}\int_{\Om_{\sigma,z}}\mathbbm{1}_{\{(x,y)\colon
				|u_N(z+\sigma x)-u_N(z+\sigma y)|/|x-y|^{1+\gamma}>\lambda\}}(x,y)
			\cdot |x-y|^{\gamma-1}\id x\id y\\
    \le &\ \sum_{j=1}^{M_{\sigma,z}}\int_{I_{\sigma,z,j}'}\int_{I_{\sigma,z,j}'}
            \mathbbm{1}_{\{(x,y)\colon
				|u_N(z+\sigma x)-u_N(z+\sigma y)|/|x-y|^{1+\gamma}>\lambda\}}(x,y)
			\cdot |x-y|^{\gamma-1}\id x\id y\\
    =&\ \sum_{j=1}^{M_{\sigma,z}} \nu_{\gamma}(E_{\gamma,\lambda}(u_N,I'_{\sigma,z,j})).
    \end{split}
    \end{equation}
    Note that
    \begin{align*}
        \lambda \nu_{\lambda}(E_{\gamma,\lambda}(u_N,\Omega_{\sigma,z})) = F_{\gamma,\lambda}(u_N,\Omega_{\sigma,z}) \le F_{\gamma,\lambda}(u_N,\R) \le C'|D u_{\sigma,z}|(\R)
    \end{align*}
    for all $\lambda >0$ by \eqref{eq:sup bound}.
    This is integrable over $z\in \langle \sigma\rangle^\perp$
    by \eqref{eq: one dim Df int}
    (with $D^a$ replaced by $D$),
    which justifies the use of Fatou's lemma below.
    Moreover, note that for a.e. $\sigma\in \mathbb S^{n-1}$ and a.e. $z\in \langle \sigma\rangle^\perp$,
    by \eqref{eq:change of var for Df} we have
    \begin{equation}\label{eq:no Cantor}
    |D^c u_{\sigma,z}|(\R)=0.
    \end{equation}
    By \eqref{eq:Om multiple integral} we have
		\begin{align*}
            & \limsup_{\lambda\to\infty} F_{\gamma,\lambda}(u_N,\Omega)\\
			& =\limsup_{\lambda\to\infty}\lambda\int_{\Om}\int_{\Om} \mathbbm{1}_{\{(x',y')\colon
				|u_{\lambda}(x')-u_{N}(y')|/|x'-y'|^{1+\gamma}>\lambda\}}(x',y')|x'-y'|^{\gamma-n}\id x'\id y'\\
			&= \limsup_{\lambda\to\infty}\frac {\lambda}{2}\int_{\mathbb S^{n-1}}\int_{\langle \sigma\rangle^\perp} \int_{\Om_{\sigma,z}}\int_{\Om_{\sigma,z}}\\
            & \hspace{1.7cm}\mathbbm{1}_{\{(x,y)\colon
				|u_{N}(z+\sigma x)-u_{N}(z+\sigma y)|/|x-y|^{1+\gamma}>\lambda\}}(x,y) \cdot |x-y|^{\gamma-1}\id x\id y\id z\id \sigma \\
            &=\limsup_{\lambda\to\infty}\frac {1}{2}\int_{\mathbb S^{n-1}}\int_{\langle \sigma\rangle^\perp}  F_{\gamma,\lambda}(u_N, \Omega_{\sigma,z})\id z\id \sigma.
        \end{align*} With Fatou's lemma and \eqref{eq:sum for slices}, the
        last line can be further controlled by the following
        \begin{align*}
            &\frac {1}{2}\int_{\mathbb S^{n-1}}\int_{\langle \sigma\rangle^\perp}\limsup_{\lambda\to\infty} F_{\gamma,\lambda}(u_N,\Omega_{\sigma,z}) \id z \id \sigma\\
            &\le \frac {1}{2}\int_{\mathbb S^{n-1}}\int_{\langle \sigma\rangle^\perp}\limsup_{\lambda\to\infty}\sum_{j=1}^{M_{\sigma,z}} F_{\gamma,\lambda}(u_N,I'_{\sigma,z,j})\id z\id \sigma\\
			& \le \int_{\mathbb S^{n-1}}\int_{\langle \sigma\rangle^\perp}
			\sum_{j=1}^{M_{\sigma,z}}\left[\frac{1}{\gamma}|D^a u_{\sigma,z}|(I_{\sigma,z,j}')
			+\frac{1}{\gamma+1}
			|D^j u_{\sigma,z}|(I_{\sigma,z,j}')\right]\id z\id \sigma
        \end{align*}
        by \eqref{eq:I and R} and \eqref{eq:no Cantor}.
        Combining the two estimates above, we have
        \begin{align*}
            & \limsup_{\lambda\to\infty} F_{\gamma,\lambda}(u_N,\Omega)\\
            & \le \int_{\mathbb S^{n-1}}\int_{\langle \sigma\rangle^\perp}
			\left[\frac{1}{\gamma}|D^a u_{\sigma,z}|((\Om')_{\sigma,z})
			+\frac{1}{\gamma+1}
			|D^j u_{\sigma,z}|((\Om')_{\sigma,z})\right]\id z\id \sigma\\
			& = \frac{C_n}{\gamma}|D^a u|(\Om')+\frac{C_n}{\gamma+1}|D^j u|(\Om')
            \quad\textrm{by }\eqref{eq:change of var for Df}\\
            &\le \frac{C_n}{\gamma}|D^a u|(\Om)+\frac{C_n}{\gamma+1}|D^j u|(\Om)
            +\frac{C_n}{\gamma}\eps.
		\end{align*}
    On the other hand, we have
	\begin{align*}
		&\left\{(x,y)\in \Om\times \Om\colon
		\frac{|u(x)-u(y)|}{|x-y|^{1+\gamma}}>\lambda\right\}\\
		&\quad\subset  \left\{(x,y)\in \Om\times \Om\colon
		\frac{|u_N(x)-u_N(y)|}{|x-y|^{1+\gamma}}>(1-\eps)\lambda\right\}\\
		&\qquad \cup  \left\{(x,y)\in \R^n\times \R^n\colon
		\frac{|(u-u_N)(x)-(u-u_N)(y)|}{|x-y|^{1+\gamma}}
		>\eps\lambda\right\}.
	\end{align*}
	Thus
	\begin{align*}
		&\limsup_{\lambda \to \infty}F_{\gamma,\lambda}\left(u,\Om\right)\\
		&\qquad \le (1-\eps)^{-1}\limsup_{\lambda \to \infty}F_{\gamma,(1-\eps)\lambda}\left(u_N,\Om\right)
		+\eps^{-1}\limsup_{\lambda \to \infty}F_{\gamma,\eps\lambda}\left(u-u_N,\R^n\right)\\
		&\qquad\le (1-\eps)^{-1}\left(\frac{C_n}{\gamma}|D^a u|(\Om)+\frac{C_n}{\gamma+1}|D^j u|(\Om)
            +\frac{C_n}{\gamma}\eps\right)+\eps^{-1}\frac{C_n}{\gamma}|D(u-u_N)|(\R^n)
	\end{align*}
    by \eqref{eq:SBV limit2}.
    Letting $N\to\infty$ and then $\eps\to 0$, we get the result.
	\end{proof}

    The following lemma follows from \cite{KrRind2}.
	
	\begin{lemma}\label{lem:mollifying}
	Let $\Om\subset \R^n$ be open and let $u\in \dot{\BV}(\Om)$. Then there exists a sequence $\{v_i\}_{i=1}^{\infty}$ in $\dot{W}^{1,1}(\Om) \cap C^{\infty}(\Om)$ such that 
        \begin{equation*}
            v_i \to u \quad \mbox{in }L^1_{\loc}(\Om),\quad  
            |Dv_i|(\Om)\to |Du|(\Om),
            \quad  E(Dv_i)(\Om) \to E(Du)(\Om),
        \end{equation*} and the functions 
        \[ 
        \tilde{v}_i(x) := \left\{
        \begin{aligned}
        &v_i(x)-u(x),& &x\in \Omega,\\
        &0,& &x\in \R^n \setminus\Omega
        \end{aligned}\right. 
        \]
        lie in the space $\dot{\BV}(\R^n)$ with $\abss{D \tilde{v}_i}(\R^n \setminus \Omega)=0$,
        and $\tilde{v}_i \to 0$ in $L^1_{\loc}(\R^n)$.
	\end{lemma}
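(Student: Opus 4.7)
The plan is to apply the Anzellotti--Giaquinta-type trace-preserving area-strict approximation of $\BV$ functions developed by Kristensen and Rindler \cite{KrRind2}. Since $u\in\dot{\BV}(\Om)$, we have $\int_\Om E(Du)<\infty$, and their construction produces a smooth sequence with the required properties via a partition of unity.

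First I would pick an exhaustion $\Om_0\Subset\Om_1\Subset\cdots\Subset\Om$ with $\bigcup_j\Om_j=\Om$, and a subordinate smooth partition of unity $\varphi_j\in C_c^\infty(\Om_{j+1}\setminus\overline{\Om}_{j-1})$ with $\sum_j\varphi_j\equiv 1$ on $\Om$. For each $i$ set
\[
v_i:=\sum_j \rho_{\eta_{i,j}}*(\varphi_j u),
\]
where $\rho_\eta$ is a standard mollifier and the radii $\eta_{i,j}>0$ are chosen small enough that $\supp(\rho_{\eta_{i,j}}*(\varphi_j u))\Subset \Om_{j+2}\setminus\overline{\Om}_{j-2}$. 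The sum is locally finite, so $v_i\in C^\infty(\Om)$. Using $\sum_j\nabla\varphi_j\equiv 0$, one computes
\[
Dv_i-Du=\sum_j\bigl[\rho_{\eta_{i,j}}*(\varphi_j Du)-\varphi_j Du\bigr]+\sum_j\bigl[\rho_{\eta_{i,j}}*(u\nabla\varphi_j)-u\nabla\varphi_j\bigr]\Le^n.
\]
Shrinking $\eta_{i,j}$ sufficiently depending on $i$, the right-hand side has total variation less than $1/i$, hence $v_i\to u$ in $L^1_{\loc}(\Om)$ and $\limsup_i\int_\Om E(Dv_i)\le\int_\Om E(Du)$. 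Together with the lower semicontinuity of the area functional under $L^1_{\loc}$-convergence (\cite[Theorem 11.7]{Rindler}), this forces $\int_\Om E(Dv_i)\to\int_\Om E(Du)$, so that $v_i\to u$ area-strictly in $\Om$. In particular $\nabla v_i\in L^1(\Om;\R^n)$ for large $i$, i.e.\ $v_i\in\dot W^{1,1}(\Om)$.

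For the extension property, the decisive point is that each convolution $\rho_{\eta_{i,j}}*(\varphi_j u)$ is compactly supported in $\Om$ and the partition of unity is arranged so that $v_i$ preserves the boundary behaviour of $u$ in the weak $\BV$ sense. Consequently, extending $\tilde v_i:=v_i-u$ by zero across $\partial\Om$, the distributional gradient of $\tilde v_i$ on $\R^n$ equals $(Dv_i-Du)\mres\Om$ as an $\R^n$-valued Radon measure, which gives $\tilde v_i\in\dot{\BV}(\R^n)$ with $|D\tilde v_i|(\R^n\setminus\Om)=0$. The convergence $\tilde v_i\to 0$ in $L^1_{\loc}(\R^n)$ is then immediate, since $\tilde v_i\equiv 0$ on $\R^n\setminus\Om$ and $v_i\to u$ in $L^1_{\loc}(\Om)$.

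The main obstacle is the rigorous justification that $D\tilde v_i$ carries no mass on $\partial\Om$: this amounts to a trace-matching statement between $v_i$ and $u$, which for an arbitrary open $\Om$ (not necessarily Lipschitz) requires fine tuning of the mollification scales $\eta_{i,j}$ relative to the geometry of the nested exhaustion and to $\dist(\supp\varphi_j,\partial\Om)$. This is precisely the technical content of the Kristensen--Rindler construction, which I would invoke to close the argument.
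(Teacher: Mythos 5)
Your overall approach — invoking the Anzellotti--Giaquinta/Kristensen--Rindler construction via a partition of unity and tuned mollification radii — is exactly what the paper does; the paper cites \cite[Lemma 1]{KrRind2} directly. The first part of your sketch (construction of $v_i$, control of $|D(v_i-u)|(\Om)$, area-strict convergence, $v_i\in\dot W^{1,1}(\Om)$, and the trace-matching that makes $\tilde v_i\in\dot\BV(\R^n)$ with $|D\tilde v_i|(\partial\Om)=0$) is in line with that reference.

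The gap is in your last step. You assert that $\tilde v_i\to 0$ in $L^1_{\loc}(\R^n)$ is ``immediate, since $\tilde v_i\equiv 0$ on $\R^n\setminus\Om$ and $v_i\to u$ in $L^1_{\loc}(\Om)$.'' This does not follow. A compact set $K\subset\R^n$ that touches $\partial\Om$ has $K\cap\Om$ \emph{not} compactly contained in $\Om$, so $L^1_{\loc}(\Om)$-convergence gives you no control over $\int_{K\cap\Om}|v_i-u|$ — all the mass could concentrate near $\partial\Om$. (Also note that your intermediate claim that small $|D(v_i-u)|(\Om)$ ``hence'' gives $v_i\to u$ in $L^1_{\loc}(\Om)$ is itself not automatic; in the construction one must separately tune $\eta_{i,j}$ to control the $L^1$ distance.) The paper flags precisely this point: it notes that the $L^1_{\loc}(\R^n)$ convergence of $\tilde v_i$ is \emph{not} part of \cite[Lemma 1]{KrRind2}, and supplies a separate argument. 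Namely, for $x\in\R^n\setminus\Om$ and $r>0$ one uses that $\tilde v_i\equiv 0$ on $B(x,r)\setminus\Om$ and a Poincar\'e inequality for $\BV$ functions vanishing on a set (\cite[Lemma 2.2]{KKSL13}) to get $\int_{B(x,r)}|\tilde v_i|\le Cr\,|D\tilde v_i|(B(x,r))\le Cr\bigl(|Du|(\Om)+|Dv_i|(\Om)\bigr)$, a bound uniform in $i$; then $\BV$ compactness on $B(x,r)$, identification of the limit as $0$, and a diagonalization over $r=j$ produce a subsequence with $\tilde v_i\to 0$ in $L^1_{\loc}(\R^n)$. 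You need to either carry out this compactness argument or strengthen the construction to obtain $\|v_i-u\|_{L^1(\Om)}\to 0$ directly, before the conclusion you want becomes available.
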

    \begin{proof}
    This is given in \cite[Lemma 1]{KrRind2} for bounded open sets; however, the proof therein
    applies to any open set in $\R^n$.
    The facts that $|Dv_i|(\Om)\to |Du|(\Om)$ and
    $\tilde{v}_i \to 0$ in $L^1_{\loc}(\R^n)$ are not
    given there, but the first one follows from the area-strict convergence and Theorem 5 in the same reference, and the second
    one can be obtained as follows.
    We already know that $\tilde{v}_i \to 0$ in $L^1_{\loc}(\Om)$;
    we can assume that moreover $\tilde{v}_i(x)\to 0$ for a.e. $x\in \R^n$.
    On the other hand, given $x\in \R^n\setminus \Om$ and $r>0$, we note that for large $i\in\N$,
    $|\tilde{v}_i|\le 1$ in some set $A\subset B(x,r)$ with
    $\Le^n(A)\ge \Le^n(B(x,r))/2$,
    and then we can apply a Poincar\'e inequality
    (see e.g. \cite[Lemma 2.2]{KKSL13}) to $(\abss{\tilde{v}_i}-1)_+$ to obtain
       \begin{align*}
			\int_{B(x,r)}|\tilde{v}_i|\id y
			&\le Cr |D\tilde{v}_i|(B(x,r)) +\Le^n(B(x,r))\\
			&= Cr |D\tilde{v}_i|(B(x,r)\cap \Om)+\Le^n(B(x,r))\\
			&\le  Cr (|Du|(\Om)+|Dv_i|(\Om))+\Le^n(B(x,r))
		\end{align*}
        for a constant $C$ independent of $i$. Since $|Dv_i|(\Om)$ is a bounded sequence,
        it follows that $\tilde{v}_i$ is bounded in $\BV(B(x,r))$,
        and thus by BV compactness, we find a subsequence of $\tilde{v}_i$ (not relabeled)
        converging in $L^1(B(x,r))$
        to a limit, which is necessarily $0$.
        We can do this for each $r=j$, $j\in\N$, and
        then by a diagonalization argument, we obtain the convergence
        $\tilde{v}_i \to 0$ in $L^1_{\loc}(\R^n)$ for a further subsequence  (not relabeled).
    \end{proof}
    
	We have also the following standard approximation lemma.
	
	\begin{lemma}\label{lem:appr with jump functions}
	Let $\Omega\subset\R^n$ be open  and bounded, and let $u\in \BV(\Om)$.
	Then there exists a sequence of functions
	$w_i\in \BV(\Om)$ with
    $w_i\to u$ in $L^{1}(\Om)$ and
	\[
	|D^j w_i|(\Om)=|Dw_i|(\Om)\to |Du|(\Om)\quad \textrm{as }i\to\infty.
	\]
	\end{lemma}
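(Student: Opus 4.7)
The plan is to realize $w_i$ as a step-function approximation obtained by rounding the values of $u$ onto a grid of mesh $1/i$ in $\R$, and to control the total variation via the $\BV$ coarea formula together with a Fubini-type averaging over a random shift of the grid.

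Concretely, for each $i\in\N$ and $s\in[0,1)$ I would set
\[
w_i^s(x):=\frac{s+k}{i}\quad\text{if }\frac{s+k}{i}\le u(x)<\frac{s+k+1}{i},\ k\in\Z.
\]
Since $\abss{w_i^s-u}\le 1/i$ pointwise and $\Om$ is bounded, $w_i^s\to u$ uniformly and hence in $L^1(\Om)$. Writing $w_i^s$ as an Abel sum over the nested superlevel sets $F_k^s:=\{u\ge (s+k)/i\}$, one gets (modulo an additive constant)
\[
w_i^s=\frac{1}{i}\sum_{k\in\Z}\mathbbm{1}_{F_k^s},\qquad Dw_i^s=\frac{1}{i}\sum_{k\in\Z}D\mathbbm{1}_{F_k^s}.
\]
Because of the nesting $F_{k+1}^s\subset F_k^s$, all the vector measures $D\mathbbm{1}_{F_k^s}$ point in the same direction (the measure-theoretic inner normal to the superlevel set is aligned with the direction of increasing $u$), so there is no cancellation when summing total variations:
\[
\abss{Dw_i^s}(\Om)=\frac{1}{i}\sum_{k\in\Z}P(F_k^s,\Om).
\]

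Integrating this identity in $s$ over $[0,1)$ and applying the coarea formula for $\BV$ gives
\[
\int_0^1\abss{Dw_i^s}(\Om)\id s=\int_{\R}P(\{u\ge t\},\Om)\id t=\abss{Du}(\Om),
\]
so some $s_i\in[0,1)$ satisfies $\abss{Dw_i^{s_i}}(\Om)\le\abss{Du}(\Om)$. Setting $w_i:=w_i^{s_i}$ and combining this upper bound with the lower semicontinuity of the total variation under $L^1(\Om)$-convergence yields $\abss{Dw_i}(\Om)\to\abss{Du}(\Om)$, which is the strict convergence asked for.

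It remains to identify $Dw_i$ as purely jump. Since $w_i$ takes only countably many values and each of the associated sets $F_k^{s_i}$ has finite perimeter (for a.e. choice of the threshold, and hence for the chosen $s_i$ after the Fubini step above), the measure $Dw_i$ is concentrated on $\bigcup_k\partial^*F_k^{s_i}$, which is a countable union of $\mathcal H^{n-1}$-rectifiable sets contained in the jump set of $w_i$; consequently the absolutely continuous and Cantor parts of $Dw_i$ vanish and $\abss{Dw_i}(\Om)=\abss{D^jw_i}(\Om)$. The main technical point is justifying the non-cancelling identity $\abss{Dw_i^s}(\Om)=\frac{1}{i}\sum_k P(F_k^s,\Om)$, which hinges on the nested structure of the superlevel sets and the alignment of their inner normals; once this is in hand, the shift averaging together with the coarea formula delivers the rest automatically.
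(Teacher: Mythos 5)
Your proposal is correct, and it rests on the same pillars as the paper's proof: a coarea-based step-function approximation, the bound $\abss{Dw_i}(\Omega)\le\abss{Du}(\Omega)$, and lower semicontinuity under $L^1$ convergence to upgrade this to strict convergence. What you do differently is the way good thresholds are selected. The paper first truncates $u$ at a level $M$, subdivides $[-M,M]$ into $k$ intervals, and picks $t_j$ in each so that the Riemann sum underestimates the coarea integral; the truncation error in $L^1$ is then controlled separately via \eqref{eq:jump approx trunc2}. You instead keep the full range of $u$, use a shifted grid $\{(s+k)/i\}_{k\in\Z}$, and average over $s\in[0,1)$: the Tonelli computation $\int_0^1\abss{Dw_i^s}(\Om)\,\mathrm{d}s=\abss{Du}(\Om)$ then hands you a good shift $s_i$ by the mean value property. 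This buys two genuine simplifications: no truncation is needed (since $\abss{w_i^s-u}<1/i$ uniformly and $\Om$ is bounded, $L^1$ convergence is immediate), and the finiteness of $P(F_k^{s_i},\Om)$ for all $k$ simultaneously drops out of the choice of $s_i$ with $\frac1i\sum_k P(F_k^{s_i},\Om)\le\abss{Du}(\Om)<\infty$.

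One small streamlining worth noting: the discussion of the Abel sum $\frac{1}{i}\sum_k\mathbbm{1}_{F_k^s}$ and the alignment of inner normals is not needed, and the Abel sum does not actually converge when $u$ is unbounded below. The identity you want, $\abss{Dw_i^s}(\Om)=\frac{1}{i}\sum_{k}P(F_k^s,\Om)$, follows directly by applying the coarea formula to $w_i^s$ itself: for $t\in[(s+k)/i,(s+k+1)/i)$ one has $\{w_i^s>t\}=F_{k+1}^s$, so integrating $P(\{w_i^s>t\},\Om)$ in $t$ gives exactly $\frac{1}{i}\sum_k P(F_k^s,\Om)$. This bypasses the convergence issue and makes the non-cancellation automatic. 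Your concluding argument that $Dw_i=D^jw_i$ also works, but note it needs slightly more care than the paper's finite-sum version since you have a countable sum; the clean statement is that $\abss{Dw_i}$ is carried by the $\sigma$-finite (w.r.t. $\mathcal H^{n-1}$) rectifiable set $\bigcup_k\partial^*F_k^{s_i}$, on which $D^a$ and $D^c$ vanish.
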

    \begin{proof}
        The coarea formula (see, e.g., Theorem 5.9 in \cite{EG}) gives
        \[
        \abss{Du}(\Omega) = \int_{-\infty}^{\infty} \abss{D\mathbbm{1}_{E_t}}(\Omega)\id t,
        \]
        where $E_t:= \{x\in \Omega\col u(x)>t\}$.
        Fix $\varepsilon>0$. Then it is possible to take $M\gg 1$ such that 
        \begin{gather}
            \mbox{both } E_{M} \mbox{ and }\{x\in \Omega\col u(x)<-M\} \mbox{ have finite perimeters in }\Omega \label{eq:jump approx trunc1}\\
            \quad 
            \mbox{and}\quad
            \int_{\{x\in \Omega\col \abss{u(x)}\geq M-1\}} \abss{u} \id x <\eps.
            \label{eq:jump approx trunc2}
        \end{gather}
        \par Let $k$ be a positive integer
        to be determined later, and define
        \begin{gather*}
        I_0:= \big(-M,-M+\frac{2M}{k}\big), \\
        I_j:= \big[-M+j\frac{2M}{k},-M+(j+1)\frac{2M}{k}\big),\ j=1,\dots,k-1.
        \end{gather*} 
        Select numbers $t_j \in I_j$ such that $E_{t_j}$ has finite perimeter in $\Omega$ with
        \begin{equation}\label{eq:jump approx Rsum}
            \sum_{j=0}^{k-1}\frac{2M}{k}\abss{D\mathbbm{1}_{E_{t_j}}}(\Omega) 
            \le \int_{-M}^{M} \abss{D\mathbbm{1}_{E_{t}}}(\Om)\id t.
        \end{equation}
        \par Then define 
        \[
        u_{\eps}:= -M\mathbbm{1}_{\Omega}+\sum_{j=0}^{k-1} \frac{2M}{k}\mathbbm{1}_{E_{t_j}}.
        \]
        By the boundedness of $\Omega$ and the choice of $t_j$, we know that $u_{\eps} \in \BV(\Omega)$ with 
        \[
        Du_{\eps} = D^j u_{\eps} = \sum_{j=0}^{k-1} \frac{2M}{k}D \mathbbm{1}_{E_{t_j}}\mres \Omega.
        \]
        \par The definition of $u_{\eps}$ indicates that 
        \begin{gather}
        \abss{u_{\eps}-u} \leq \frac{2M}{k} \quad \mbox{in }\{x\in \Omega\col t_0<u(x)\leq t_{k-1}\},\\
        \mbox{and}\quad \abss{u_{\eps}} \leq \abss{u}+\frac{2M}{k} \quad \mbox{in } E_{t_{k-1}}\cup (\Omega\setminus E_{t_0}).
        \end{gather}
        Thus, the $L^1$-distance between $u_{\eps}$ and $u$ is controlled as follows:
        \begin{align*}
        \int_{\Omega} \abss{u_{\eps}-u}\id x 
        &\leq \int_{E_{t_{k-1}}\cup (\Omega\setminus E_{t_0})} \brac{2\abss{u}+\frac{2M}{k}} \id x + \int_{\Omega}\frac{2M}{k} \id x\\
        &\leq \int_{\{x\in \Omega\col \abss{u(x)}\geq M-1\}} 2\abss{u}\id x + \Le^n (\Omega) \frac{4M}{k} \leq 4\varepsilon,
        \end{align*}
        if $k$ is chosen such that $2M/k <\min \{1, \eps/\Le^n(\Omega)\}$. On the other hand, by the coarea formula we have that 
        \[
        \abss{Du_{\eps}}(\Omega) = \sum_{j=0}^{k-1} \frac{2M}{k} \abss{D\mathbbm{1}_{E_{t_j}}}(\Omega),
        \]
        and thus \eqref{eq:jump approx Rsum} and the coarea formula imply that
        $\abss{Du_{\eps}}(\Omega)\le \abss{Du}(\Omega)$.
        Recalling also the lower semicontinuity of the total variation with respect to $L^1$ convergence,
        we have that the sequence
        $\{w_i\}$ defined by $w_i := u_{1/i}$ satisfies the desired properties.
    \end{proof}
	
	Now we prove the main result of this section.
	
	\begin{theorem}\label{thm:Gamma area strict upper bound}
		Let $\Om\subset \R^n$ be either $\R^n$ or a bounded domain with Lipschitz boundary. 
		Suppose $u\in \dot{\BV}(\Om)$. Then there exists a family
		$\{u_\lambda\}_{\lambda>0}$ with $u_\lambda\to u$ area-strictly as $\lambda\to\infty$, and such that
		\begin{align*}
			\limsup_{\lambda\to \infty}F_{\gamma,\lambda}(u_{\lambda},\Om)
			\le \frac{C_n}{\gamma}|D^a u|(\Om)
			+ \frac{C_n}{\gamma+1}|D^s u|(\Om).
		\end{align*}
	\end{theorem}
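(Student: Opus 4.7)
The plan is a diagonal argument that reduces the recovery-sequence problem to the $\dot{\SBV}$ case, where Theorem \ref{thm:SBV} gives the sharp value of $\lim_\lambda F_{\gamma,\lambda}$. Concretely, I aim to construct a sequence $\{u_i\}_{i=1}^\infty\subset\dot{\SBV}(\Om)$ satisfying
\begin{enumerate}
\item[(i)] $u_i\to u$ area-strictly in $\Om$ as $i\to\infty$;
\item[(ii)] $|D^a u_i|(\Om)\to|D^a u|(\Om)$ and $|D^s u_i|(\Om)\to|D^s u|(\Om)$ as $i\to\infty$.
\end{enumerate}
Given such a sequence, Theorem \ref{thm:SBV} yields
\[
\lim_{\lambda\to\infty}F_{\gamma,\lambda}(u_i,\Om)=\tfrac{C_n}{\gamma}|D^a u_i|(\Om)+\tfrac{C_n}{\gamma+1}|D^s u_i|(\Om),
\]
which by (ii) converges to the target value. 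A standard diagonal extraction then gives $\lambda_i\uparrow\infty$ such that $F_{\gamma,\lambda_i}(u_i,\Om)$ is within $1/i$ of this limit, and setting $u_\lambda:=u_i$ for $\lambda\in[\lambda_i,\lambda_{i+1})$ produces the desired recovery family.

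To construct $\{u_i\}$, I treat the Cantor part separately. If $|D^c u|(\Om)=0$, i.e.\ $u\in\dot{\SBV}(\Om)$, I simply take $u_i\equiv u$. Otherwise, for each $i$ I select an open set $V_i\subset\Om$ with $|Du|(\partial V_i\cap\Om)=0$ chosen so that $V_i$ shrinks to $\supp |D^c u|$ in the sense that
\[
|D^c u|(\Om\setminus V_i)=0,\qquad |D^a u|(V_i)+|D^j u|(V_i)\to 0,\qquad \Le^n(V_i)\to 0.
\]
On $V_i$ I apply Lemma \ref{lem:appr with jump functions} to obtain a pure jump SBV function $w_i$ with $w_i\to u|_{V_i}$ in $L^1(V_i)$ and $|Dw_i|(V_i)\to|Du|(V_i)$, arranging the construction so that $w_i$ matches the trace of $u$ on $\partial V_i$ (this will be done by building $w_i$ from the level-set decomposition of $u$ used in the proof of Lemma \ref{lem:appr with jump functions}, and taking $\partial V_i$ to lie on level sets of an auxiliary regularization of the distance to $\supp|D^c u|$). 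The approximation is then
\[
u_i:=\begin{cases} w_i & \text{on } V_i,\\ u & \text{on } \Om\setminus V_i.\end{cases}
\]
Since $u|_{\Om\setminus V_i}\in\SBV$ (no Cantor mass there) and $w_i$ is pure jump, and the traces match across $\partial V_i$, we have $u_i\in\dot{\SBV}(\Om)$ with no interface jump.

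Verifying (i) and (ii) is then straightforward: $|D^a u_i|(\Om)=|D^a u|(\Om\setminus V_i)\to|D^a u|(\Om)$, while
\[
|D^s u_i|(\Om)=|Dw_i|(V_i)+|D^j u|(\Om\setminus V_i)\to|D^c u|(\Om)+|D^j u|(\Om)=|D^s u|(\Om),
\]
giving (ii); and since $\nabla u_i=\nabla u$ on $\Om\setminus V_i$ with $\Le^n(V_i)\to 0$, area-strict convergence $\int_\Om E(Du_i)\to\int_\Om E(Du)$ reduces to the $L^1$ convergence of the absolutely continuous parts plus (ii), giving (i). The main obstacle is the trace-matching gluing at $\partial V_i$: a naive combination of a smooth/regular piece with a jump piece introduces a spurious interface jump that would destroy (ii) and, by adding uncontrolled singular mass, prevent area-strict convergence. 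The remedy, as sketched, is to synchronise the choice of $V_i$ with the level-set structure used to build $w_i$, so that the interface contribution vanishes in the limit. Once this is verified, the theorem follows by the diagonal argument described above.
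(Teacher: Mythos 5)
Your overall skeleton — reduce to the $\dot{\SBV}$ case, invoke Theorem \ref{thm:SBV}, and diagonalize — is the same as the paper's, and that part is sound. The difficulty, which you correctly identify, is the construction of an $\dot{\SBV}$ approximating sequence $u_i$ with (i) and (ii). It is precisely at the step you flag as ``the main obstacle'' that your proposal has a genuine gap, and the proposed remedy does not close it.

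Your gluing $u_i:=w_i\mathbbm{1}_{V_i}+u\mathbbm{1}_{\Om\setminus V_i}$ creates an interface jump across $\partial V_i\cap\Om$ of magnitude equal to the difference of the inner and outer traces. Lemma \ref{lem:appr with jump functions} gives $w_i\to u|_{V_i}$ only in $L^1(V_i)$; this controls nothing about the trace of $w_i$ on $\partial V_i$, and a pure-jump (piecewise constant) $w_i$ cannot have the same trace as $u$ except in degenerate cases. Your proposed fix — ``taking $\partial V_i$ to lie on level sets of an auxiliary regularization of the distance to $\supp|D^c u|$'' — makes $\partial V_i$ align with level sets of a distance-type function, which has no relation to the level sets of $u$ from which $w_i$ is built; so it does not yield trace matching. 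Even if one instead tried to estimate the spurious interface mass by $\mathcal H^{n-1}(\partial V_i)\cdot\osc$, this requires $\mathcal H^{n-1}(\partial V_i)<\infty$, a uniform bound on the trace oscillation (hence truncation of $u$), and a quantitative control of $w_i-u$ up to $\partial V_i$ that Lemma \ref{lem:appr with jump functions} does not provide. As stated, (ii) and area-strict convergence are therefore not established.

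The paper avoids this issue entirely by never performing a hard gluing: it takes a cutoff $\eta$ with $\eta=1$ near the compact singular carrier $K$ and $\supp\eta\subset W$, mollifies $u$ on $\Om\setminus K$ (Lemma \ref{lem:mollifying}) to obtain a smooth $W^{1,1}$ function $v$, approximates $v$ on $W$ by a pure-jump $w_i$ (Lemma \ref{lem:appr with jump functions}), and sets $u_i':=\eta w_i+(1-\eta)v$. The Leibniz rule then produces an error term bounded by $|w_i-v||\nabla\eta|$, which is killed by $L^1$ convergence $w_i\to v$; no trace matching is ever needed. If you want to salvage your construction in the spirit of the paper, replace the sharp indicator interface by a smooth partition of unity as above — this is the essential idea you are missing.

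Two further minor points: (a) you write ``$w_i\to u|_{V_i}$'' while $w_i$ is a single function attached to $V_i$; you need a double index $w_{i,j}$ and a diagonal choice $j=j(i)$, which should be stated; (b) your reduction to bounded $u$ is implicit but necessary — Lemma \ref{lem:appr with jump functions} and any interface estimate require it, and the paper handles unboundedness by a separate truncation step.
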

	\begin{proof}
	Fix $\eps>0$ and take a $\Le^n$-negligible compact set $K\subset \Om$ such that
	\begin{equation}\label{eq:choice of K}
	|D^s u|(\Om\setminus K)<\eps/2
	\quad\textrm{and}\quad
	|D^a u|(K)=0.
	\end{equation}
	Then choose a bounded open set $W\Subset \Om$ containing $K$ with 
	\[
    |Du|(W\setminus K)<\eps/2, \quad \Le^n (\partial W)=0 = \abss{Du}(\partial W).
    \]
	It follows that
	\begin{equation}\label{eq:choice of W}
	|Du|(W)
	= |D^su|(K)+|Du|(W\setminus K)
	< |D^s u|(\Om)+\eps/2.
	\end{equation}
	
	Apply Lemma \ref{lem:mollifying} to $u \in \dot{\BV}(\Om\setminus K)$ to obtain
    functions $v_i\in \dot{W}^{1,1}(\Om\setminus K)\cap C^{\infty}(\Om\setminus K)$
    converging to $u$ strictly and area-strictly in $\Om\setminus K$.
    We can extend each $v_i$ by $u$ to $K$, and by the lemma the extended functions (not relabelled)
    are in $\dot{\BV}(\Omega)$,
    and satisfy $v_i\to u$ in $L_{\loc}^1(\Om)$ and $|D(v_i-u)|(K)=0$.
    Then it is clear that the functions $v_i$ converge to $u$ strictly and
    area-strictly in $\Om$.
   \par Take an open set $\Omega_{\eps}$ with $W \Subset \Om_{\eps} \Subset \Om$. 
	Choosing $v=v_i$ for a sufficiently large $i$, we have the following:
    \begin{enumerate}[label=(\arabic*)]
    \item $v \in \dot{\BV}(\Omega)$ and 
	$\Vert v-u\Vert_{L^1(\Om_\eps)}<\eps$; 
	\item by \eqref{eq:choice of K},
    \begin{equation}\label{eq:choice of g initial}
       \begin{split}
		 \int_{\Om\setminus K}\sqrt{1+|\nabla v|^2}\,dx 
		&\le	\int_{\Om\setminus K}\sqrt{1+|\nabla u|^2}\,dx+|D^s u|(\Om\setminus K)
		+\eps/2 \\
		&<	\int_{\Om\setminus K}\sqrt{1+|\nabla u|^2}\,dx+\eps;
        \end{split}
    \end{equation}
    \item  by the strict convergence and \eqref{eq:choice of K},
    \begin{equation}\label{eq:choice of i}
	\int_{\Om}|\nabla v|\,dx= |Dv|(\Om\setminus K)
    \le |Du|(\Om\setminus K)+\eps/2
    <\int_{\Om}|\nabla u|\,dx+\eps;
	\end{equation}
	\item by \eqref{eq:strict conv consequence two} and the fact that
    $\Le^n (\partial W)=0 = \abss{Du}(\partial W)$, we get
	\begin{equation}\label{eq:choice of g}
	|D v|(W)
    \le |Du|(W)+\eps/2.
	\end{equation}
    \end{enumerate}
	Using Lemma \ref{lem:appr with jump functions},
	we find a sequence $w_i\in \BV(W)$ with $w_i\to v$ in $L^1(W)$ and
	\begin{equation}\label{eq:choice of hi}
	|D^j w_i|(W)=|Dw_i|(W)\to |Dv|(W).
	\end{equation}
    Take a cutoff function $\eta\in C_c^{\infty}(W)$ with $0\le \eta\le 1$ on $W$,
    $\eta=1$ in $K$.
	Then define
	\[
	u_i':=\eta w_i+(1-\eta)v.
	\]
	By  the Leibniz rule for BV functions,
    see e.g. \cite[Example 3.97]{AFP},
    we have
    \[
    D u_i' = \eta D^j w_i +(1-\eta)\nabla v +\nabla \eta (w_i-v).
    \] Thus, it follows that
	\begin{align}
	&|Du_i'|
	\le |Dw_i|\mres W+|Dv|\mres (\Om\setminus K)
	+ |w_i-v||\nabla \eta|, \notag\\
	&|\nabla u_i'|
	\le |\nabla v|\mathbbm{1}_{\Omega\setminus K}+ |w_i-v||\nabla \eta|,\notag\\
    &|D^c u_i'|(\Om)=0, \quad \textrm{and}\quad |D^j u_i'|
	\le |D^j w_i|\mres W. \label{eq:nabla f i} 
	\end{align}
	Then we have
    \begin{align*}
		\limsup_{i\to\infty}|D^j(u_i')|(\Om)
		&\le \limsup_{i\to\infty}|D^jw_i|(W)\\
		&\le |Du|(W)+\eps/2
		\quad\textrm{by }\eqref{eq:choice of hi},\,\eqref{eq:choice of g}\\
		&< |D^s u|(\Om)+\eps
		\quad\textrm{by }\eqref{eq:choice of W},
	\end{align*}
	as well as
	\begin{align*}
		\limsup_{i\to\infty}\int_{\Om}\sqrt{1+|\nabla u_i'|^2}\,dx
		&=\limsup_{i\to\infty}\int_{\Om\setminus K}\sqrt{1+|\nabla u_i'|^2}\,dx\\
		&\le \int_{\Om\setminus K}\sqrt{1+|\nabla v|^2}\,dx\quad 
		\textrm{by }\eqref{eq:nabla f i}\\
		&< \int_{\Om\setminus K}\sqrt{1+|\nabla u|^2}\,dx+\eps
		\quad\textrm{by }\eqref{eq:choice of g initial},
	\end{align*}
    and similarly, by \eqref{eq:choice of i},
    \[
    \limsup_{i\to\infty}\int_{\Om}|\nabla u_i'|\,dx
    <\int_{\Om}|\nabla u|\,dx+\eps.
    \]
	Taking $u'$ to be $u'_i$ for sufficiently large $i\in\N$,
	 we have $u'\in \dot{\SBV}(\Om)$ with $\Vert u'-u\Vert_{L^1(\Om_{\eps})}<\eps$,
    \begin{equation}\label{eq:jump upper bound}
	|D^s u'|(\Om)\le |D^s u|(\Om)+\eps,
    \end{equation}
	and
	\begin{equation}\label{eq:abs cont upper bound }
	\int_{\Om}\sqrt{1+|\nabla u'|^2}\,dx 
	\le \int_{\Om}\sqrt{1+|\nabla u|^2}\,dx+\eps
    \quad\textrm{and}\quad
    \int_{\Om}|\nabla u'|\,dx\le \int_{\Om}|\nabla u|\,dx+\eps.
	\end{equation}
	With the choice $\eps=1/k$, label $u_k:=u'$. The open sets
    $\{\Om_{1/k}\}$ can be taken as an exhaustion of $\Om$, that is, 
    $\Om_{1/k} \Subset \Om_{1/(k+1)}$ and $\bigcup_{k=1}^{\infty} \Omega_{1/k} = \Omega$.
	The sequence $\{u_k\}$ chosen as above satisfies the following:
    \begin{align*}
    u_k \in \dot{\SBV}(\Om),\quad
    u_k \to u \quad \mbox{in }L^1_{\loc}(\Om)\quad
    \mbox{and}\quad  \limsup_{j\to \infty} E(Du_k)(\Om) \leq E(Du)(\Om).
    \end{align*} The functional $\int_{\Om}E(\cdot)$ is lower semicontinuous with respect to weak$^{\ast}$ convergence, which further implies that $u_{k}\to u$ area-strictly in $\Om$.
    By \eqref{eq:jump upper bound} and \eqref{eq:abs cont upper bound }, we get
    \begin{equation}\label{eq:two convergences}
        |D^j u_k|(\Om)=|D^s u_k|(\Om)\le |D^s u|(\Om)+1/k
        \quad\textrm{and}\quad
        |D^a u_k|(\Om)\le |D^a u|(\Om)+1/k.
    \end{equation}
	
	For every $k\in\N$, by Theorem \ref{thm:SBV}
	we find $\lambda_k>k$ such that 
	\[
	F_{\gamma,\lambda}(u_k,\Om)
	\le  \frac{C_n}{\gamma}|D^a u_k|(\Om)
	+ \frac{C_n}{\gamma+1}|D^s u_k|(\Om)+\frac 1k
	\]
	for all $\lambda\ge \lambda_k$.
	We can also assume that $\lambda_{k+1}>\lambda_k$.
	Define $u_{\lambda}:=u_k$ for all $\lambda\in [\lambda_k,\lambda_{k+1})$.
	Note that $k\to \infty$ as $\lambda\to \infty$, and so we get 
	$u_{\lambda}\to u$ area-strictly as $\lambda \to \infty$.
	For all $\lambda\in [\lambda_k,\lambda_{k+1})$, we have
	\begin{align*}
		F_{\gamma,\lambda}(u_\lambda,\Om)
		&= F_{\gamma,\lambda}(u_k,\Om) \\
		&\le \frac{C_n}{\gamma}|D^a u_k|(\Om)
		+ \frac{C_n}{\gamma+1}|D^s u_k|(\Om)+\frac 1k\\
		&\le \frac{C_n}{\gamma}(|D^a u|(\Om)+1/k)
		+ \frac{C_n}{\gamma+1}(|D^s u|(\Om)+1/k)+\frac 1k
	\end{align*}
    by \eqref{eq:two convergences}.
	Thus
	\[
	\limsup_{\lambda\to \infty}F_{\gamma,\lambda}(u_\lambda,\Om)
	\le \frac{C_n}{\gamma}|D^a u|(\Om)
	+ \frac{C_n}{\gamma+1}|D^s u|(\Om).
	\]
	\end{proof}

\begin{proof}[Proof of Theorem \ref{thm:Gamma area strict limit}]
Combine Theorems \ref{thm:Gamma area strict lower bound} and \ref{thm:Gamma area strict upper bound}.
\end{proof}

The recovery sequence can be chosen to consist of smooth functions.

\begin{proposition}
		Let $\Om\subset \R^n$ be either $\R^n$ or a bounded domain with Lipschitz boundary. 
		Suppose $u\in \dot{\BV}(\Om)$. Then there exists a family
		$\{u_\lambda\}_{\lambda>0}\subset C^{\infty}(\Om)$
        with $u_\lambda\to u$ area-strictly as $\lambda\to\infty$, and such that
		\begin{align*}
			\limsup_{\lambda\to \infty}F_{\gamma,\lambda}(u_{\lambda},\Om)
			\le \frac{C_n}{\gamma}|D^a u|(\Om)
			+ \frac{C_n}{\gamma+1}|D^s u|(\Om).
		\end{align*}
	\end{proposition}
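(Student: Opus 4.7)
The strategy is to upgrade the $\dot{\SBV}(\Om)$-valued recovery sequence furnished by Theorem~\ref{thm:Gamma area strict upper bound} to a $C^\infty(\Om)$-valued one via mollification, with the scales chosen through a diagonal argument. First, apply Theorem~\ref{thm:Gamma area strict upper bound} to obtain $\{u_\lambda^0\}_{\lambda>0}\subset\dot{\SBV}(\Om)$ with $u_\lambda^0\to u$ area-strictly in $\Om$ and $\limsup_{\lambda\to\infty}F_{\gamma,\lambda}(u_\lambda^0,\Om)\le\frac{C_n}{\gamma}|D^au|(\Om)+\frac{C_n}{\gamma+1}|D^su|(\Om)$. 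When $\Om$ is a bounded Lipschitz domain, first extend each $u_\lambda^0$ to a function in $\SBV(\R^n)$ by the reflection argument used in the proof of Theorem~\ref{thm:SBV}. Then Lemma~\ref{lem:mollifying} furnishes a smooth approximation $v_{\lambda,i}\in C^\infty(\Om)$ of $u_\lambda^0$ (for instance, one may take $v_{\lambda,i}=u_\lambda^0*\rho_{1/i}$), with $v_{\lambda,i}\to u_\lambda^0$ area-strictly in $\Om$ as $i\to\infty$.

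The key new step is to show that for each fixed $\lambda$, the value $F_{\gamma,\lambda}(v_{\lambda,i},\Om)$ can be made arbitrarily close to $F_{\gamma,\lambda}(u_\lambda^0,\Om)$ by taking $i$ large. This is obtained via the triangle-inequality splitting
\[
F_{\gamma,\lambda}(v_{\lambda,i},\Om)\le\tfrac{1}{1-\eps}F_{\gamma,(1-\eps)\lambda}(u_\lambda^0,\Om)+\eps^{-1}F_{\gamma,\eps\lambda}(u_\lambda^0-v_{\lambda,i},\Om),
\]
derived exactly as in the proof of Proposition~\ref{prop:1d case variation abs cont}, combined with the vanishing claim $\lim_{i\to\infty}F_{\gamma,\eps\lambda}(u_\lambda^0-v_{\lambda,i},\Om)=0$ for fixed $\eps,\lambda$. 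The latter rests on the fact that $u_\lambda^0-v_{\lambda,i}$ is pointwise bounded by the jump amplitudes of $u_\lambda^0$ and supported in a $(1/i)$-neighborhood of the jump set $S_{u_\lambda^0}$; a direct scaling estimate on pairs $(x,y)$ in the level set $E_{\gamma,\eps\lambda}(u_\lambda^0-v_{\lambda,i})$, distinguishing pairs within the same connected component of the neighborhood from pairs in distinct components, yields a bound vanishing as $i\to\infty$ for fixed $\eps,\lambda$.

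Finally, a diagonal argument ties everything together. Pick a sequence $\lambda_k\to\infty$ and, using Theorem~\ref{thm:SBV} applied to each SBV function $u_{\lambda_k}^0$, choose $\eps_k\to 0$ slowly enough that $F_{\gamma,(1-\eps_k)\lambda_k}(u_{\lambda_k}^0,\Om)\le F_{\gamma,\lambda_k}(u_{\lambda_k}^0,\Om)+1/k$. Then pick $i_k$ large enough that $\eps_k^{-1}F_{\gamma,\eps_k\lambda_k}(u_{\lambda_k}^0-v_{\lambda_k,i_k},\Om)\le 1/k$ and that $v_{\lambda_k,i_k}$ is within $1/k$ of $u_{\lambda_k}^0$ both in $L^1_{\loc}(\Om)$ and in $\int_\Om E(D\cdot)$. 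Setting $u_\lambda:=v_{\lambda_k,i_k}\in C^\infty(\Om)$ for $\lambda\in[\lambda_k,\lambda_{k+1})$, composition of the area-strict convergences gives $u_\lambda\to u$ area-strictly in $\Om$, and the splitting above yields the desired bound
$\limsup_{\lambda\to\infty}F_{\gamma,\lambda}(u_\lambda,\Om)\le\frac{C_n}{\gamma}|D^au|(\Om)+\frac{C_n}{\gamma+1}|D^su|(\Om)$.

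The main obstacle is the vanishing $\lim_{i\to\infty}F_{\gamma,\eps\lambda}(u_\lambda^0-v_{\lambda,i},\Om)=0$ for fixed $\eps,\lambda$: although the geometric picture of a difference function supported on a shrinking tube around the jump set is clear, the quantitative scaling estimate separating the contributions of pairs within the same component of the tube from those in distinct components requires careful bookkeeping, particularly in dimensions $n\ge 2$ where the jump set is $(n-1)$-dimensional.
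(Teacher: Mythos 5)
Your overall architecture (start from the Theorem~\ref{thm:Gamma area strict upper bound} recovery sequence, mollify, diagonalize) matches the paper's, but the mechanism you use to transfer the $F_{\gamma,\lambda}$-estimate from $u_\lambda^0$ to its mollification is different, and it contains a genuine gap.

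The crux of your argument is the claim that for fixed $\eps,\lambda$,
\[
\lim_{i\to\infty}F_{\gamma,\eps\lambda}(u_\lambda^0-v_{\lambda,i},\Om)=0,
\]
which you explicitly flag as the ``main obstacle'' and do not prove. This is not a routine verification. The only general tool the paper provides for bounding $F$ from above is the estimate \eqref{eq:sup bound}, $F_{\gamma,s}(w,\Om)\le C'\Var(w,\Om)$, and this cannot give the vanishing: $|D(u_\lambda^0-v_{\lambda,i})|(\Om)$ does \emph{not} tend to $0$ as $i\to\infty$ (its $\limsup$ is bounded only by $2|D^s u_\lambda^0|(\Om)$, and the singular part of $D(u_\lambda^0-v_{\lambda,i})$ is exactly $D^j u_\lambda^0$ for all $i$). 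So the vanishing, if true, must rely on fine geometric structure of $u_\lambda^0-v_{\lambda,i}$ near the jump set. Your heuristic (a tube of width $1/i$ around an $(n-1)$-rectifiable jump set, with contributions from within-tube and across-tube pairs) points to the right picture, but a rigorous version requires nontrivial bookkeeping: one must control the interaction between different jump pieces, handle the possibility that $u_\lambda^0$ has countably many jumps of varying amplitudes, and deal with the fact that the natural per-jump estimate involves powers $a_j^{\gamma/(1+\gamma)}$ with a sub-unit exponent, whose sum is not controlled by $\sum a_j<\infty$ without further argument. As written, this step is an open hole in the proof.

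The paper avoids this issue entirely by working at the level of the superlevel sets $E_{\gamma,\lambda}$ rather than the functionals applied to differences. Concretely, the paper (i) first reduces to $u$ bounded, $|u|\le M$, (ii) defines $u_{\lambda,i}$ by mollifying and gluing with a cutoff so that $u_{\lambda,i}=u_\lambda$ outside a compact set, and arranges $u_{\lambda,i}\to u_\lambda$ a.e.\ (not merely in $L^1_{\loc}$), (iii) observes that, thanks to the cutoff and the $L^\infty$-bound, the set difference $E_{\gamma,\lambda+1}(u_{\lambda,i})\setminus E_{\gamma,\lambda}(u_\lambda)$ is contained, uniformly in $i$, in a fixed bounded product of balls of finite $\nu_\gamma$-measure, and (iv) applies reverse Fatou: a.e.\ convergence of $u_{\lambda,i}$ forces $\limsup_i E_{\gamma,\lambda+1}(u_{\lambda,i})\subset E_{\gamma,\lambda}(u_\lambda)$ up to a $\nu_\gamma$-null set, giving $\limsup_i\nu_\gamma(E_{\gamma,\lambda+1}(u_{\lambda,i}))\le \nu_\gamma(E_{\gamma,\lambda}(u_\lambda))$. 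The ``$\lambda+1$ versus $\lambda$'' shift then replaces your $(1-\eps)\lambda$ trick in the final diagonalization. This is more elementary than your route, requiring only measure-theoretic Fatou plus the cutoff/boundedness observations, and it completely sidesteps the need to estimate $F$ applied to a difference. If you want to salvage your approach, you would need to actually carry out the quantitative vanishing estimate you describe; alternatively, the Fatou-on-level-sets argument is the clean fix. Note also that the reduction to bounded $u$ (which the paper does first and undoes at the end via area-strict convergence of truncations) is a step you omit but likely also need.
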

	\begin{proof}
    Assume first that $u$ is bounded, so that $|u|< M$ for some $0<M<\infty$.
    Take the sequence $\{u_\lambda\}_{\lambda>0}$ from
    Theorem \ref{thm:Gamma area strict upper bound}.
    In the case $\Om=\R^n$, note that by the construction, every function
    $u_\lambda$ is smooth outside some closed ball
    $\overline{B}(0,R_{\lambda})$.
    Then for every $\lambda>0$, from Lemma \ref{lem:mollifying} we find a sequence
    $\{v_{\lambda,i}\}_{i=1}^{\infty}$ such that
	$v_{\lambda,i}\in \dot{W}^{1,1}(B(0,R_{\lambda}+1)\cap C^{\infty}(B(0,R_{\lambda}+1))$,
    and $v_{\lambda,i}\to u_{\lambda}$ area-strictly in $B(0,R_{\lambda}+1)$.
    We can also assume that $v_{\lambda,i}\to u_{\lambda}$ a.e. in $B(0,R_{\lambda}+1)$.
    Take a cutoff function $\eta_{\lambda}\in C_c^{\infty}(B(0,R_{\lambda}+1))$ with
    $0\le \eta_{\lambda}\le 1$ and $\eta_{\lambda}=1$ in $B(0,R_{\lambda})$.
    Then define
    \[
    u_{\lambda,i}:=\eta_{\lambda}v_{\lambda,i}+(1-\eta_{\lambda})u_{\lambda}.
    \]
    By the Leibniz rule for BV functions, it is easy to check that
	$u_{\lambda,i}\in \dot{W}^{1,1}(\Om)\cap C^{\infty}(\Om)$,
    $u_{\lambda,i}- u_{\lambda}\to 0$ in $L^1(\Om)$,
    $u_{\lambda,i}\to u_{\lambda}$ area-strictly in $\Om$,
    and $u_{\lambda,i}\to u_{\lambda}$ a.e. in $\Om$.
    In the case where $\Om$ is a bounded Lipschitz domain, we get these properties directly from
    Lemma \ref{lem:mollifying}, without the need for a cutoff function.
    Moreover, since $|u|\le M$, we can assume that also $|u_{\lambda,i}|\le M$
    (this follows from the constructions used in the proofs of
    Lemma \ref{lem:mollifying} and \ref{lem:appr with jump functions}).
    Notice that $u_{\lambda,i}=u_{\lambda}$ outside
    $B(0,R_{\lambda}+1)$,
    and so
    \begin{align*}
    &\left\{(x,y)\in \Om\times \Om\colon
	\frac{|u_{\lambda,i}(x)-u_{\lambda,i}(y)|}{|x-y|^{1+\gamma}}>\lambda+1\right\}
    \setminus \left\{(x,y)\in \Om\times \Om\colon
			\frac{|u_{\lambda}(x)-u_{\lambda}(y)|}{|x-y|^{1+\gamma}}>\lambda\right\}\\
            &\quad \subset B(0,R_{\lambda}+1)\times 
            B(0,(2M/(\lambda+1))^{1/(1+\gamma)}+R_{\lambda}+1)\\
             &\quad \quad\cup B(0,(2M/(\lambda+1))^{1/(1+\gamma)}+R_{\lambda}+1)\times B(0,R_{\lambda}+1).
    \end{align*}
    This justifies an application of Fatou's lemma to obtain 
    \begin{align*}
    &\limsup_{i\to\infty} \nu_\gamma\left(\left\{(x,y)\in \Om\times \Om\colon
			\frac{|u_{\lambda,i}(x)-u_{\lambda,i}(y)|}{|x-y|^{1+\gamma}}>\lambda+1\right\}\right)\\
             &\quad \le  \nu_\gamma\left(\left\{(x,y)\in \Om\times \Om\colon
			\frac{|u_{\lambda}(x)-u_{\lambda}(y)|}{|x-y|^{1+\gamma}}>\lambda\right\}\right),
    \end{align*}
    For sufficiently large $i=i(\lambda)$, we have
    $\Vert u_{\lambda,i(\lambda)}-u_{\lambda}\Vert_{L^1(\Om)}<1/\lambda$,
    \[
    \left|\int_{\Om}E(Du_{\lambda,i(\lambda)})-\int_{\Om}E(Du_\lambda)\right|<1/\lambda,
    \]
    and
    \begin{equation}\label{eq:lambda plus one}
    \begin{split}
    & \nu_\gamma\left(\left\{(x,y)\in \Om\times \Om\colon
			\frac{|u_{\lambda,i(\lambda)}(x)-u_{\lambda,i(\lambda)}(y)|}{|x-y|^{1+\gamma}}>\lambda+1\right\}\right)\\
            &\quad \le \nu_\gamma\left(\left\{(x,y)\in \Om\times \Om\colon
			\frac{|u_{\lambda}(x)-u_{\lambda}(y)|}{|x-y|^{1+\gamma}}>\lambda\right\}\right)
            +1/\lambda^2.
    \end{split}
    \end{equation}
    For each $k\in \N$, let $v_{\lambda}:=u_{k,i(k)}$ for $\lambda\in [k+1,k+2)$.
    Then $v_{\lambda}\to u$ area-strictly, and
    for all $\lambda\in [k+1,k+2)$, we have
      \begin{align*}
    &\lambda \nu_\gamma\left(\left\{(x,y)\in \Om\times \Om\colon
			\frac{|v_{\lambda}(x)-v_{\lambda}(y)|}{|x-y|^{1+\gamma}}>\lambda\right\}\right)\\
                &\quad \le \lambda \nu_\gamma\left(\left\{(x,y)\in \Om\times \Om\colon
			\frac{|v_{\lambda}(x)-v_{\lambda}(y)|}{|x-y|^{1+\gamma}}>k+1\right\}\right)+1/\lambda\\
            &\quad \le \frac{\lambda}{k} k\nu_\gamma\left(\left\{(x,y)\in \Om\times \Om\colon
			\frac{|u_{k}(x)-u_{k}(y)|}{|x-y|^{1+\gamma}}
            >k\right\}\right)+1/\lambda
            \quad\textrm{by }\eqref{eq:lambda plus one}.
    \end{align*}
    As $\lambda\to \infty$, we have also $k\to\infty$ and $\lambda/k\to 1$.
    Thus $\{v_{\lambda}\}_{\lambda>0}$ is the required family.

    In the general case, we note that for the truncations we have
    $\min\{M,\max\{-M,u\}\}\to u$ in the area-strict sense in $\Om$ as $M\to\infty$ by the coarea formula. The chain rule for $\BV$ functions (Theorem 3.99 in \cite{AFP}) also implies
    \[ \abss{D^a u_M}(\Omega) \to \abss{D^a u}(\Omega), \quad \abss{D^s u_M}(\Omega)\to \abss{D^s u}(\Omega). \]
    Then it is straightforward to construct the required family.
    \end{proof}
    
		\begin{remark}\label{rmk:Gamma conv}
		Combining Lemma \ref{lem:appr with jump functions}
        with Theorem \ref{thm:SBV}, we can also show the following.
		Suppose $u\in \dot{\BV}(\Om)$. Then there exists a family
		$\{u_\lambda\}_{\lambda>0}$ with $u_\lambda\to u$ in $L^1_{\loc}(\Om)$ as $\lambda\to\infty$,
		and such that
		\begin{align*}
			\liminf_{\lambda\to \infty}F_{\gamma,\lambda}(u_\lambda,\Om)
			\le \limsup_{\lambda\to \infty}F_{\gamma,\lambda}(u_\lambda,\Om)
			\le \frac{C_n}{\gamma+1}|D u|(\Om).
		\end{align*}
	This shows that the ordinary $\Gamma$-limit of the functionals, if it exists, is at most
	\[
	\frac{C_n}{\gamma+1}|D u|(\Om),
	\]
	which in the case of $\SBV$ functions for which $D^a u\neq 0$ is strictly smaller than
	the ``pointwise'' limit given in \eqref{eq:SBV limit}.
	\end{remark}


\begin{thebibliography}{ACMM}

        \bibitem{AB}J.J. Alibert, G. Bouchitt\'{e}, 
        \textit{Non-uniform integrability and generalized Young measures},
        J. Convex Anal. 4 (1997), no. 1, 129-147.

        \bibitem{ADM}L. Ambrosio, G. Dal Maso, 
        \textit{On the relaxation in $BV(\Omega;\R^m)$ of quasi-convex integrals},
        J. Funct. Anal. 109 (1992), no. 1, 76-97.
		
		\bibitem{AFP}L. Ambrosio, N. Fusco, and D. Pallara,
		\textit{Functions of bounded variation and free discontinuity problems.}
		Oxford Mathematical Monographs. The Clarendon Press, Oxford University Press, New York, 2000.

		\bibitem{AGMP}C. Antonucci, M. Gobbino, M. Migliorini, and N. Picenni,
		\textit{Optimal constants for a nonlocal approximation of Sobolev norms and total variation},
		Anal. PDE 13 (2020), no. 2, 595--625.

        \bibitem{BB}A. Bj\"orn and J. Bj\"orn,
        \textit{Poincar\'e inequalities and Newtonian Sobolev functions on noncomplete metric spaces},
        J. Differential Equations 266 (2019), no. 1, 44--69.
        
		\bibitem{BSSY} H. Brezis, A. Seeger, J. Van Schaftingen, and P.-L. Yung,
		\textit{Families of functionals representing Sobolev norms},
		Anal. PDE 17 (2024), no. 3, 943--979.
		
		\bibitem{BSSY2} H. Brezis, A. Seeger, J. Van Schaftingen, and P.-L. Yung,
		\textit{Sobolev spaces revisited},
		Atti Accad. Naz. Lincei Rend. Lincei Mat. Appl. 33 (2022), no. 2, 413–437.
		
		\bibitem{BVSY} H. Brezis, J. Van Schaftingen, and P.-L. Yung,
		\textit{A surprising formula for Sobolev norms},
		Proc. Nat. Acad. Sci. U.S.A 118 (2021), no. 8, Paper No. e2025254118, 6 pp.

        \bibitem{EG} L. C. Evans and R. F. Gariepy, \textit{Measure theory and fine properties of functions.} Revised edition. Textbooks in Mathematics. CRC Press, Boca Raton, FL, 2015.

        \bibitem{FM}I. Fonseca, S. M\"{u}ller,
        \textit{Relaxation of quasiconvex functionals in $BV(\Omega,\R^p)$ for integrands $f(x,u,\nabla u)$},
        Arch. Rational Mech. Anal. 123 (1993), no. 1, 1-49.

        \bibitem{MaGo}M. Gobbino and N. Picenni,
        \textit{Gamma-liminf estimate for a class of non-local approximations of Sobolev and BV norms},
        J. Funct. Anal. 289 (2025), no. 9, Paper No. 111106, 25 pp.

	    \bibitem{KKSL13}J. Kinnunen, R. Korte, A. Lorent, and N. Shanmugalingam,
		\textit{Regularity of sets with quasiminimal boundary surfaces in metric spaces},
		J. Geom. Anal. 23 (2013), no. 4, 1607--1640.

        \bibitem{KR20} J. Kristensen, B. Rai\cb{t}\u{a}, \textit{An introduction to generalized Young measures.} Lecture note 45/2020. Max-Planck-Institut für Mathematik in den Naturwissenschaften Leipzig, 2020.
		

		\bibitem{KrRind2}J. Kristensen and F. Rindler,
		\textit{Characterization of generalized gradient Young measures generated by sequences in
		$W^{1,1}$ and $\BV$},
		Arch. Ration. Mech. Anal. 197 (2010), no.2, 539--598.
		
		\bibitem{L-sharp}P. Lahti,
		\textit{A sharp lower bound for a class of non-local approximations of the total variation},
        Math. Ann. 392 (2025), no. 1, 469--486.
        
		\bibitem{Pic}N. Picenni,
		\textit{New estimates for a class of non-local approximations of the total variation},
		J. Funct. Anal. 287 (2024), no.1, Paper No. 110419, 21 pp.	
		
		\bibitem{Pol}A. Poliakovsky,
		\textit{Some remarks on a formula for Sobolev norms due to Brezis, Van Schaftingen and Yung},
		J. Funct. Anal. 282 (2022), no. 3, Paper No. 109312, 47 pp.

        \bibitem{Res}J.G. Reshetnyak,
        \textit{The weak convergence of completely additive vector-valued set functions},
        Sibirsk. Mat. Z. 9 (1968), 1386-1394.

        \bibitem{Rindler} F. Rindler,
        \textit{Calculus of variations.} Universitext. Springer International Publishing AG, Cham, 2018.
		
	\end{thebibliography}
\end{document}